%
%
%
%
%

\documentclass[smallextended]{svjour3}     
\smartqed  
\usepackage{graphicx}
\usepackage{amsfonts}
\usepackage{amssymb}
\usepackage{amsmath}
\usepackage{amsxtra}
\usepackage{latexsym}
\usepackage{mathptmx}      
%
%
%

\oddsidemargin 12mm
 \evensidemargin 12mm
 \topmargin  0 pt
 \textheight=8.8 true in
  \textwidth=5.3 true in

\begin{document}

\title{\bf  \Large On the discrepancy principle for some
Newton type methods for solving
nonlinear inverse problems
}

\titlerunning{Short form of title}        

\author{Qinian Jin         \and
    Ulrich Tautenhahn 
}

\authorrunning{Short form of author list} 

\institute{Qinian Jin \at
 Department of Mathematics, The University of Texas at Austin,  Austin, Texas 78712, USA\\
\email{qjin@math.utexas.edu}
           \and
Ulrich Tautenhahn \at
Department of Mathematics, University of Applied Sciences Zittau/G\"{o}rlitz, PO Box 1454,
02754 Zittau, Germany\\
          \email{u.tautenhahn@hs-zigr.de}
}


\newtheorem{Assumption}{Assumption}

\def\A{\mathcal A}
\def\B{\mathcal B}

\maketitle

\begin{abstract}
We consider the computation of stable approximations to the exact
solution $x^\dag$ of nonlinear ill-posed inverse problems $F(x)=y$
with nonlinear operators $F:X\to Y$ between two Hilbert spaces $X$
and $Y$ by the Newton type methods
$$
x_{k+1}^\delta=x_0-g_{\alpha_k}
\left(F'(x_k^\delta)^*F'(x_k^\delta)\right) F'(x_k^\delta)^*
\left(F(x_k^\delta)-y^\delta-F'(x_k^\delta)(x_k^\delta-x_0)\right)
$$
in the case that only available data is a noise $y^\delta$ of $y$
satisfying $\|y^\delta-y\|\le \delta$ with a given small noise
level $\delta>0$. We terminate the iteration by the discrepancy
principle in which the stopping index $k_\delta$ is determined as
the first integer such that
$$
\|F(x_{k_\delta}^\delta)-y^\delta\|\le \tau \delta
<\|F(x_k^\delta)-y^\delta\|, \qquad 0\le k<k_\delta
$$
with a given number $\tau>1$. Under certain conditions on
$\{\alpha_k\}$, $\{g_\alpha\}$ and $F$, we prove that
$x_{k_\delta}^\delta$ converges to $x^\dag$ as $\delta\rightarrow
0$ and establish various order optimal convergence rate results.
It is remarkable that we even can show the order optimality under
merely the Lipschitz condition on the Fr\'{e}chet derivative $F'$
of $F$ if $x_0-x^\dag$ is smooth enough.

\keywords{Nonlinear inverse problems \and Newton type methods \and
the discrepancy principle \and order optimal convergence rates}

\subclass{65J15 \and 65J20 \and 47H17}
\end{abstract}

\def\theequation{\thesection.\arabic{equation}}
\catcode`@=12

\section{\bf Introduction}
\setcounter{equation}{0}

In this paper we will consider the nonlinear inverse problems
which can be formulated as the operator equations
\begin{equation}\label{1}
F(x)=y,
\end{equation}
where $F: D(F)\subset X\to Y$ is a nonlinear operator between the
Hilbert spaces $X$ and $Y$ with domain $D(F)$. We will assume that
problem (\ref{1}) is ill-posed in the sense that its solution does
not depend continuously on the right hand side $y$, which is the
characteristic property for most of the inverse problems. Such
problems arise naturally from the parameter identification in
partial differential equations.

Throughout this paper $\|\cdot\|$ and $(\cdot, \cdot)$ denote
respectively the norms and inner products for both the spaces $X$
and $Y$ since there is no confusion. The nonlinear operator $F$ is
always assumed to be Fr\'{e}chet differentiable, the Fr\'{e}chet
derivative of $F$ at $x\in D(F)$ is denoted as $F'(x)$ and
$F'(x)^*$ is used to denote the adjoint of $F'(x)$. We assume that
$y$ is attainable, i.e. problem (1.1) has a solution $x^\dag \in
D(F)$ such that
$$
F(x^\dag) = y.
$$
Since the right hand side is usually obtained by measurement,
thus, instead of $y$ itself, the available data is an
approximation $y^\delta$ satisfying
\begin{equation}\label{1.3}
\|y^\delta-y\|\le \delta
\end{equation}
with a given small noise level $\delta> 0$. Due to the
ill-posedness, the computation of a stable solution of (\ref{1})
from $y^\delta$ becomes an important issue, and the regularization
techniques have to be taken into account.

Many regularization methods have been considered to solve
(\ref{1}) in the last two decades. Tikhonov regularization is one
of the well-known methods that has been studied extensively (see
\cite{SEK93,JH99,TJ03} and the references therein). Due to the
straightforward implementation, iterative methods are also
attractive for solving nonlinear inverse problems. In this paper
we will consider some Newton type methods in which the iterated
solutions $\{x_k^\delta\}$ are defined successively by
\begin{equation}\label{3}
x_{k+1}^\delta=x_0-g_{\alpha_k}
\left(F'(x_k^\delta)^*F'(x_k^\delta)\right) F'(x_k^\delta)^*
\left(F(x_k^\delta)-y^\delta-F'(x_k^\delta)(x_k^\delta-x_0)\right),
\end{equation}
where $x_0^\delta:=x_0$ is an initial guess of $x^\dag$,
$\{\alpha_k\}$ is a given sequence of numbers such that
\begin{equation}\label{4}
\alpha_k>0, \qquad 1\le \frac{\alpha_k}{\alpha_{k+1}}\le r \qquad
\mbox{and}\qquad \lim_{k\rightarrow\infty} \alpha_k=0
\end{equation}
for some constant $r>1$, and $g_\alpha: [0, \infty)\to (-\infty,
\infty)$ is a family of piecewise continuous functions satisfying
suitable structure conditions. The method (\ref{3}) can be derived
as follows. Suppose $x_k^\delta$ is a current iterate, then we may
approximate $F(x)$ by its linearization around $x_k^\delta$, i.e.
$F(x)\approx F(x_k^\delta)+F'(x_k^\delta)(x-x_k^\delta)$. Thus,
instead of (\ref{1}), we have the approximate equation
\begin{equation}\label{n1}
F'(x_k^\delta)(x-x_k^\delta)=y^\delta-F(x_k^\delta).
\end{equation}
If $F'(x_k^\delta)$ has bounded inverse, the usual Newton method
defines the next iterate by solving (\ref{n1}) for $x$. For
nonlinear ill-posed inverse problems, however, $F'(x_k^\delta)$
in general is not invertible. Therefore, we must use linear
regularization methods to solve (\ref{n1}). There are several ways
to do this step. One way is to rewrite (\ref{n1}) as
\begin{equation}\label{n2}
F'(x_k^\delta)
h=y^\delta-F(x_k^\delta)+F'(x_k^\delta)(x_k^\delta-x_0),
\end{equation}
where $h=x-x_0$. Applying the linear regularization method defined
by $\{g_\alpha\}$ we may produce the regularized solution
$h_k^\delta$ by
$$
h_k^\delta =g_{\alpha_k}
\left(F'(x_k^\delta)^*F'(x_k^\delta)\right) F'(x_k^\delta)^*
\left(y^\delta-F(x_k^\delta)+F'(x_k^\delta)(x_k^\delta-x_0)\right).
$$
The next iterate is then defined to be $x_{k+1}^\delta := x_0
+h_k^\delta$ which is exactly the form (\ref{3}).

In order to use $x_k^\delta$ to approximate $x^\dag$, we must
choose the stopping index of iteration properly. Some Newton type
methods that can be casted into the form (\ref{3}) have been
analyzed in \cite{BNS97,K97,KNS08} under a priori stopping rules,
which, however, depend on the knowledge of the smoothness of $x_0
- x^\dag$ that is difficult to check in practice. Thus a wrong
guess of the smoothness will lead to a bad choice of the stopping
index, and consequently to a bad approximation to $x^\dag$.
Therefore, a posteriori rules, which use only quantities that
arise during calculations, should be considered to choose the
stopping index of iteration. One can consult
\cite{BNS97,H97,DES98,Jin00,BH05,KNS08} for several such rules.

One widely used a posteriori stopping rule in the literature of
regularization theory for ill-posed problems is the discrepancy
principle which, in the context of the Newton method (\ref{3}),
defines the stopping index $k_\delta$ to be the first integer such
that
\begin{equation}\label{2.4}
\|F(x_{k_\delta}^\delta)-y^\delta\|\le \tau \delta
<\|F(x_k^\delta)-y^\delta\|, \quad 0\le k<k_\delta,
\end{equation}
where $\tau>1$ is a given number. The method (\ref{3}) with
$g_\alpha(\lambda)=(\alpha+\lambda)^{-1}$ together with
(\ref{2.4}) has been considered in \cite{BNS97,H97}. Note that
when $g_\alpha(\lambda)=(\alpha+\lambda)^{-1}$, the method
(\ref{3}) is equivalent to the iteratively regularized
Gauss-Newton method \cite{B}
\begin{equation}\label{IRGN}
x_{k+1}^\delta=x_k^\delta -\left(\alpha_k I +F'(x_k^\delta)^*
F'(x_k^\delta)\right)^{-1} \left(F'(x_k^\delta)^*
(F(x_k^\delta)-y^\delta)+\alpha_k(x_k^\delta-x_0)\right).
\end{equation}
When $F$ satisfies the condition like
\begin{align}\label{a1}
F'(x)&=R(x, z) F'(z)+Q(x, z),\nonumber\\
\|I-R(x, z)\|&\le C_R\|x-z\|, \qquad \qquad\qquad x, z\in
B_\rho(x^\dag),\\
\|Q(x, z)\|&\le C_Q\|F'(z)(x-z)\|\nonumber,
\end{align}
where $C_R$ and $C_Q$ are two positive constants, for the method
defined by (\ref{IRGN}) and (\ref{2.4}) with $\tau$ being
sufficiently large, it has been shown in \cite{BNS97,H97} that if
$x_0-x^\dag$ satisfies the H\"{o}lder source condition
\begin{equation}\label{7}
x_0-x^\dag=(F'(x^\dag)^*F'(x^\dag))^\nu \omega
\end{equation}
for some $\omega\in X$ and $0\le \nu\le 1/2$, then
$$
\|x_{k_\delta}^\delta-x^\dag\|\le o(\delta^{2\nu/(1+2\nu)});
$$
while if $x_0-x^\dag$ satisfies the logarithmic source condition
\begin{equation}\label{logsource}
x_0-x^\dag=\left(-\log (F'(x^\dag)^*F'(x^\dag))\right)^{-\mu}
\omega
\end{equation}
for some $\omega\in X$ and $\mu>0$, then
$$
\|x_{k_\delta}^\delta-x^\dag\|\le O((-\ln \delta)^{-\mu}).
$$
Unfortunately, except the above results, there is no more result
available in the literature on the general method defined by
(\ref{3}) and (\ref{2.4}).

During the attempt of proving regularization property of the
general method defined by (\ref{3}) and (\ref{2.4}), Kaltenbacher
realized that the arguments in \cite{BNS97,H97} depend heavily on
the special properties of the function
$g_{\alpha}(\lambda)=(\alpha+\lambda)^{-1}$, and thus the
technique therein is not applicable. Instead of the discrepancy
principle (\ref{2.4}), she proposed in \cite{K98} a new a
posteriori stopping rule to terminate the iteration as long as
\begin{equation}\label{d10}
\max \left\{\|F(x_{m_\delta-1}^\delta)-y^\delta\|,
\|F(x_{m_\delta-1}^\delta)+F'(x_{m_\delta-1}^\delta)
(x_{m_\delta}^\delta-x_{m_\delta-1}^\delta) -y^\delta\|
\right\}\le \tau \delta
\end{equation}
is satisfied for the first time, where $\tau>1$ is a given number.
Under the condition like (\ref{a1}), it has been shown that if
$x_0-x^\dag$ satisfies the H\"older source condition (\ref{7}) for
some $\omega\in X$ and $0\le \nu\le 1/2$, then there hold the
order optimal convergence rates
$$
\|x_{m_\delta}^\delta-x^\dag\|\le C_\nu \|\omega\|^{1/(1+2\nu)}
\delta^{2\nu/(1+2\nu)}
$$
if $\{g_\alpha\}$ satisfies some suitable structure conditions,
$\tau$ is sufficiently large and $\|\omega\|$ is sufficiently
small. Note that any result on (\ref{d10}) does not imply that the
corresponding result holds for (\ref{2.4}). Note also that
$k_\delta\le m_\delta-1$ which means that (\ref{d10}) requires
more iterations to be performed. Moreover, the discrepancy
principle (\ref{2.4}) is simpler than the stopping rule
(\ref{d10}). Considering the fact that it is widely used in
practice, it is important to give further investigations on
(\ref{2.4}).

In this paper, we will resume the study of the method defined by
(\ref{3}) and (\ref{2.4}) with completely different arguments.
With the help of the ideas developed in \cite{Jin00,TJ03,Jin08},
we will show that, under certain conditions on $\{g_\alpha\}$,
$\{\alpha_k\}$ and $F$, the method given by (\ref{3}) and
(\ref{2.4}) indeed defines a regularization method for solving
(\ref{1}) and is order optimal for each $0<\nu\le \bar{\nu}-1/2$,
where $\bar{\nu}\ge 1$ denotes the qualification of the linear
regularization method defined by $\{g_\alpha\}$. In particular,
when $x_0-x^\dag$ satisfies (\ref{7}) for $1/2\le \nu\le
\bar{\nu}-1/2$, we will show that the order optimality of
(\ref{3}) and (\ref{2.4}) even holds under merely the Lipschitz
condition on $F'$. This is the main contribution of the present
paper. We point out that our results are valid for any $\tau>1$.
This less restrictive requirement on $\tau$ is important in
numerical computations since the absolute error could increase
with respect to $\tau$.

This paper is organized as follows. In Section 2 we will state
various conditions on $\{g_\alpha\}$, $\{\alpha_k\}$ and $F$, and
then present several convergence results on the methods defined by
(\ref{3}) and (\ref{2.4}). We then complete the proofs of these
main results in Sections 3, 4, and 5. In Section 6, in order to
indicate the applicability of our main results, we verify those
conditions in Section 2 for several examples of $\{g_\alpha\}$
arising from Tikhonov regularization, the iterated Tikhonov
regularization, the Landweber iteration, the Lardy's method, and
the asymptotic regularization.

\section{\bf Assumptions and main results}
\setcounter{equation}{0}

In this section we will state the main results for the method
defined by (\ref{3}) and the discrepancy principle (\ref{2.4}).
Since the definition of $\{x_k^\delta\}$ involves $F$, $g_\alpha$
and $\{\alpha_k\}$, we need to impose various conditions on them.

We start with the assumptions on $g_\alpha$ which is always
assumed to be continuous on $[0, 1/2]$ for each
$\alpha>0$. We will set
$$
r_\alpha(\lambda):=1-\lambda g_\alpha(\lambda),
$$
which is called the residual function associated with $g_\alpha$.

\begin{Assumption} \label{A2.1}
\begin{footnote}
{Recently we realized that (c) can be derived from (a) and (b).}
\end{footnote}
(a) There are positive constants $c_0$ and $c_1$ such that
$$
0< r_\alpha(\lambda)\le 1, \quad r_\alpha(\lambda)\lambda\le
c_0\alpha \quad \mbox{and}
 \quad 0\le g_\alpha(\lambda)\le c_1\alpha^{-1}
$$
for all $\alpha> 0$ and $\lambda\in [0, 1/2]$;

(b) $r_\alpha(\lambda)\le r_\beta(\lambda)$ for any $0<\alpha\le
\beta$ and $\lambda\in [0, 1/2]$;

(c) There exists a constant $c_2>0$ such that
$$
r_\beta(\lambda)-r_\alpha(\lambda) \le
c_2\sqrt{\frac{\lambda}{\alpha}}r_\beta(\lambda)
$$
for any $0<\alpha\le \beta$ and $\lambda\in [0, 1/2]$.
\end{Assumption}

The conditions (a) and (b) in Assumption \ref{A2.1} are standard
in the analysis of linear regularization methods. Assumption
\ref{A2.1}(a) clearly implies
\begin{equation}\label{A2.1.2}
 0\le r_\alpha(\lambda)\lambda^{1/2} \le c_3 \alpha^{1/2}
 \quad \mbox{and} \quad
0\le g_\alpha(\lambda)\lambda^{1/2}\le c_4 \alpha^{-1/2}
\end{equation}
with $c_3\le c_0^{1/2}$ and $c_4 \le c_1^{1/2}$. We emphasize that
direct estimates on $r_{\alpha}(\lambda)\lambda^{1/2}$ and
$g_\alpha(\lambda)\lambda^{1/2}$ could give smaller $c_3$ and
$c_4$. From Assumption \ref{A2.1}(a) it also follows for each
$0\le \nu\le 1$ that $r_\alpha(\lambda)\lambda^\nu\le c_0^{\nu}
\alpha^{\nu}$ for all $\alpha>0$ and $\lambda\in [0, 1/2]$. Thus
the linear regularization method defined by $\{g_\alpha\}$ has
qualification $\bar{\nu}\ge 1$, where, according to \cite{VV}, the
qualification is defined to be the largest number $\bar{\nu}$ with
the property that for each $0\le \nu\le \bar{\nu}$ there is a
positive constant $d_\nu$ such that
\begin{equation}\label{A2.1.3}
r_\alpha(\lambda)\lambda^\nu\le d_\nu \alpha^\nu \quad \mbox{for
all } \alpha>0 \mbox{ and } \lambda\in [0, 1/2].
\end{equation}
Moreover, Assumption \ref{A2.1}(a) implies for every $\mu>0$ that
$$
r_\alpha(\lambda)(-\ln\lambda)^{-\mu}\le
\min\left\{(-\ln\lambda)^{-\mu}, c_0\alpha \lambda^{-1} (-\ln
\lambda)^{-\mu}\right\}
$$
for all $0<\alpha\le \alpha_0$ and $\lambda\in [0, 1/2]$. It is
clear that $(-\ln\lambda)^{-\mu}\le
\left(-\ln(\alpha/(2\alpha_0))\right)^{-\mu}$ for $0\le \lambda\le
\alpha/(2\alpha_0)$. By using the fact that the function
$\lambda\to c_0\alpha\lambda^{-1}(-\ln\lambda)^{-\mu}$ is
decreasing on the interval $(0, e^{-\mu}]$ and is increasing on
the interval $[e^{-\mu}, 1)$, it is easy to show that there is a
positive constant $a_\mu$ such that $c_0\alpha \lambda^{-1}
(-\ln\lambda)^{-\mu}\le a_\mu
\left(-\ln(\alpha/(2\alpha_0))\right)^{-\mu}$ for
$\alpha/(2\alpha_0)\le \lambda\le 1/2$. Therefore for every
$\mu>0$ there is a positive constant $b_\mu$ such that
\begin{equation}\label{8.25.2}
r_\alpha(\lambda)(-\ln \lambda)^{-\mu}\le b_\mu \left(-\ln
(\alpha/(2\alpha_0))\right)^{-\mu}
\end{equation}
for all $0<\alpha\le \alpha_0$ and $\lambda \in [0, 1/2]$. This
inequality will be used to derive the convergence rate when
$x_0-x^\dag$ satisfies the logarithmic source condition
(\ref{logsource})

The condition (c) in Assumption \ref{A2.1} seems to appear here
for the first time. It is interesting to note that one can verify
it for many well-known linear regularization methods. Moreover,
the conditions (b) and (c) have the following important
consequence.

\begin{lemma}\label{L10.1}
Under the conditions (b) and (c) in Assumption \ref{A2.1}, there
holds
\begin{equation}
\|[r_\beta(A^*A)-r_\alpha(A^*A)]x\| \le
\|\bar{x}-r_\beta(A^*A)x\|+\frac{c_2}{\sqrt{\alpha}}\|A \bar{x}\|
\end{equation}
for all $x, \bar{x}\in X$, any $0<\alpha\le \beta$ and any bounded
linear operator $A:X\to Y$ satisfying $\|A\|\le 1/\sqrt{2}$.
\end{lemma}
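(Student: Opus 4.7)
My plan is to exploit the factorization $r_\beta-r_\alpha = (1-r_\alpha/r_\beta)\,r_\beta$, which is legitimate because Assumption \ref{A2.1}(a) and (b) guarantee $0 < r_\alpha(\lambda) \le r_\beta(\lambda) \le 1$ on $[0,1/2]$, so the quotient $r_\alpha/r_\beta$ is a well-defined continuous function with values in $(0,1]$. Setting $\phi:=1-r_\alpha/r_\beta$, I obtain simultaneously the trivial bound $0\le \phi(\lambda)\le 1$ and, by dividing the inequality in Assumption \ref{A2.1}(c) by $r_\beta(\lambda)>0$, the sharper estimate $\phi(\lambda) \le c_2\sqrt{\lambda/\alpha}$.

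Setting $T:=A^*A$, whose spectrum lies in $[0,1/2]$ since $\|A\|\le 1/\sqrt{2}$, I apply the Borel functional calculus to write
$$
[r_\beta(T)-r_\alpha(T)]\,x = \phi(T)\,r_\beta(T)\,x = \phi(T)\,\bar{x} - \phi(T)\bigl(\bar{x}-r_\beta(T)x\bigr),
$$
after substituting $r_\beta(T)x = \bar{x} - (\bar{x}-r_\beta(T)x)$. The triangle inequality then reduces the task to bounding the two terms on the right separately. For the second, the uniform bound $\|\phi(T)\|\le 1$ gives at once $\|\phi(T)(\bar{x}-r_\beta(T)x)\| \le \|\bar{x}-r_\beta(T)x\|$. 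For the first, the pointwise estimate $\phi(\lambda)^2 \le c_2^2\lambda/\alpha$ combined with the spectral theorem and the identity $\|\sqrt{T}\bar{x}\|=\|A\bar{x}\|$ yields
$$
\|\phi(T)\bar{x}\|^2 = \int_0^{1/2}\phi(\lambda)^2\,d\|E_\lambda \bar{x}\|^2 \le \frac{c_2^2}{\alpha}\,\|A\bar{x}\|^2.
$$
Combining these two bounds delivers the desired inequality.

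The only conceptual subtlety is recognizing that the factor $r_\beta$ must be extracted \emph{before} the spectral bound is invoked. A naive application of Assumption \ref{A2.1}(c) in the spectral integral for $x$ itself produces $\|(r_\beta-r_\alpha)(T)x\|\le (c_2/\sqrt{\alpha})\|A\,r_\beta(T)x\|$, and converting $Ar_\beta(T)x$ to $A\bar{x}$ via triangle inequality then yields an undesired factor $c_2/\sqrt{2\alpha}$ in front of $\|\bar{x}-r_\beta(T)x\|$ rather than the plain coefficient $1$ required by the statement. Splitting $r_\beta(T)x$ into $\bar{x}$ and the residual first lets the bounded multiplier $\phi(T)$ absorb the residual directly, so that the unbounded $1/\sqrt{\alpha}$ only multiplies the smoothness term $\|A\bar{x}\|$, which is exactly the structure of the stated bound.
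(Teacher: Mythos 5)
Your proof is correct and is essentially the paper's own argument: you introduce exactly the quotient $p_{\beta,\alpha}=(r_\beta-r_\alpha)/r_\beta$ (your $\phi$), bound it by $\min\{1,c_2\sqrt{\lambda/\alpha}\}$ using Assumption \ref{A2.1}(a)--(c), split $r_\beta(A^*A)x$ into $\bar{x}$ plus the residual, and estimate the two pieces by the uniform bound and the spectral integral respectively. Your closing remark about why the factor $r_\beta$ must be extracted before invoking the spectral bound is a nice observation, but the route itself coincides with the paper's.
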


\begin{proof}
For any $0<\alpha\le \beta$ we set
\begin{align*}
p_{\beta, \alpha}(\lambda)
:=\frac{r_\beta(\lambda)-r_\alpha(\lambda)} {r_\beta(\lambda)},
\qquad \lambda\in [0, 1/2].
\end{align*}
It follows from the conditions (a) and (b) in Assumption
\ref{A2.1} that
\begin{equation}\label{3.6}
0\le p_{\beta, \alpha}(\lambda) \le \min\left\{1,
c_2\sqrt{\frac{\lambda}{\alpha}}\right\}.
\end{equation}
Therefore, for any $x, \bar{x}\in X$,
\begin{align}\label{10.10}
\|[ r_\beta(A^*A)-r_\alpha(A^*A)]x\|
&=\|p_{\beta, \alpha}(A^*A) r_\beta(A^*A) x\|\nonumber\\
&\le  \|p_{\beta,\alpha}(A^*A)[r_\beta(A^*A)x-\bar{x}]\|
+\|p_{\beta, \alpha}(A^*A) \bar{x}\|\nonumber\\
& \le \|r_\beta(A^*A) x-\bar{x}\|+\|p_{\beta, \alpha}(A^*A)
\bar{x}\|.
\end{align}
Let $\{E_\lambda\}$ be the spectral family generated by $A^*A$.
Then it follows from (\ref{3.6}) that
\begin{align*}
\|p_{\beta, \alpha}(A^*A) \bar{x}\|^2
&=\int_0^{1/2} \left[p_{\beta, \alpha}(\lambda)\right]^2 d\|E_\lambda \bar{x}\|^2\\
&\le c_2^2 \int_0^{1/2} \frac{\lambda}{\alpha} d\|E_\lambda
\bar{x}\|^2
=\frac{c_2^2}{\alpha} \|(A^*A)^{1/2} \bar{x}\|^2\\
&=\frac{c_2^2}{\alpha}\|A \bar{x}\|^2.
\end{align*}
Combining this with (\ref{10.10}) gives the desired assertion. \hfill $\Box$
\end{proof}

For the sequence of positive numbers $\{\alpha_k\}$, we will
always assume that it satisfies (\ref{4}). Moreover, we need also
the following condition on $\{\alpha_k\}$ interplaying with
$r_\alpha$.

\begin{Assumption}\label{A2.2}
There is a constant $c_5>1$ such that
$$
r_{\alpha_k}(\lambda)\le c_5 r_{\alpha_{k+1}}(\lambda)
$$
for all $k$ and $\lambda\in [0, 1/2]$.
\end{Assumption}

We remark that for some $\{g_\alpha\}$ Assumption \ref{A2.2} is an
immediate consequence of (\ref{4}). However, this is not always
the case; in some situations, Assumption \ref{A2.2} indeed imposes
further conditions on $\{\alpha_k\}$. As a rough interpretation,
Assumption \ref{A2.2} requires for any two successive iterated
solutions the errors do not decrease dramatically. This may be
good for the stable numerical implementations of ill-posed
problems although it may require more iterations to be performed.
Note that Assumption \ref{A2.2} implies
\begin{equation}\label{2.31}
 \|r_{\alpha_k}(A^*A) x\|\le c_5\|r_{\alpha_{k+1}}(A^*A) x\|
\end{equation}
for any $x\in X$ and any bounded linear operator $A: X\to Y$
satisfying $\|A\|\le 1/\sqrt{2}$.

Throughout this paper, we will always assume that the nonlinear
operator $F: D(F)\subset X\to Y$ is Fr\'{e}chet differentiable
such that
\begin{equation}\label{2.1}
B_\rho(x^\dag)\subset D(F) \quad \mbox{ for some } \rho> 0
\end{equation}
and
\begin{equation}\label{2.3}
\|F'(x)\|\le \min\left\{c_3\alpha_0^{1/2}, \beta_0^{1/2}\right\},
\qquad x\in B_\rho(x^\dag),
\end{equation}
where $0<\beta_0\le 1/2$ is a number such that
$r_{\alpha_0}(\lambda)\ge 3/4$ for all $\lambda\in [0, \beta_0]$.
Since $r_{\alpha_0}(0)=1$, such $\beta_0$ always exists. The
scaling condition (\ref{2.3}) can always be fulfilled by rescaling
the norm in $Y$.

The convergence analysis on the method defined by (\ref{3}) and
(\ref{2.4}) will be divided into two cases:
\begin{enumerate}
\item[(i)] $x_0-x^\dag$ satisfies (\ref{7}) for some $\nu\ge 1/2$;

\item[(ii)] $x_0-x^\dag$ satisfies (\ref{7}) with $0\le \nu<1/2$
or (\ref{logsource}) with $\mu>0$.
\end{enumerate}
Thus different structure conditions on $F$ will be assumed in
order to carry out the arguments. It is remarkable to see that for
case (i) the following Lipschitz condition on $F'$ is enough for
our purpose.

\begin{Assumption}\label{Lip}
There exists a constant $L$ such that
\begin{equation}\label{2.2}
\|F'(x)-F'(z)\|\le L\|x-z\|
\end{equation}
for all $x, z\in B_\rho(x^\dag)$.
\end{Assumption}

As the immediate consequence of Assumption \ref{Lip}, we have
$$
\|F(x)-F(z)-F'(z)(x-z)\|\le \frac{1}{2} L\|x-z\|^2
$$
for all $x, z\in B_\rho(x^\dag)$. We will use this consequence
frequently in this paper.

During the convergence analysis of (\ref{3}), we will meet some
terms involving operators such as $r_{\alpha_k}(F'(x_k^\delta)^*
F'(x_k^\delta))$. In order to make use of the source conditions
(\ref{7}) for $x_0-x^\dag$, we need to switch these operators with
$r_{\alpha_k}(F'(x^\dag)^* F'(x^\dag))$. Thus we need the
following commutator estimates involving $r_\alpha$ and
$g_\alpha$.

\begin{Assumption} \label{A2.3}
There is a constant $c_6>0$ such that
\begin{equation}\label{3.41}
\|r_\alpha(A^*A)-r_\alpha(B^*B)\|\le c_6 \alpha^{-1/2}\|A-B\|,
\end{equation}
\begin{equation}\label{3.40}
\|\left[r_\alpha(A^*A)-r_\alpha(B^*B)\right]B^*\|\le c_6\|A-B\|,
\end{equation}
\begin{equation}\label{3.42}
\|A\left[r_\alpha(A^*A)-r_\alpha(B^*B)\right]B^*\|\le
c_6\alpha^{1/2}\|A-B\|,
\end{equation}
and
\begin{equation}\label{3.43}
\|\left[g_\alpha(A^*A)-g_\alpha(B^*B)\right]B^*\|\le c_6
\alpha^{-1}\|A-B\|
\end{equation}
for any $\alpha>0$ and any bounded linear operators $A, B: X\to Y$
satisfying $\|A\|, \|B\|\le 1/\sqrt{2}$.
\end{Assumption}

This assumption looks restrictive. However, it is interesting to
note that for several important examples we indeed can verify it
easily, see Section 6 for details. Moreover, in our applications,
we only need Assumption \ref{A2.3} with $A=F'(x)$ and $B=F'(z)$
for $x, z\in B_\rho(x^\dag)$, which is trivially satisfied when
$F$ is linear.

Now we are ready to state the first main result of this paper.

\begin{theorem}\label{T4.1}
Let $\{g_\alpha\}$ and $\{\alpha_k\}$ satisfy Assumption
\ref{A2.1}, (\ref{4}), Assumption \ref{A2.2}, and Assumption
\ref{A2.3}, let $\bar{\nu}\ge 1$ be the qualification of the
linear regularization method defined by $\{g_\alpha\}$, and let
$F$ satisfy (\ref{2.1}), (\ref{2.3}) and Assumption \ref{Lip} with
$\rho>4\|x_0-x^\dag\|$. Let $\{x_k^\delta\}$ be defined by
(\ref{3}) and let $k_\delta$ be the first integer satisfying
(\ref{2.4}) with $\tau>1$. Let $x_0-x^\dag$ satisfy (\ref{7}) for
some $\omega\in X$ and $1/2 \le \nu\le \bar{\nu}-1/2$. Then
$$
\|x_{k_\delta}^\delta-x^\dag\| \le C_\nu
\|\omega\|^{1/(1+2\nu)}\delta^{2\nu/(1+2\nu)}
$$
if $L\|u\|\le \eta_0$, where $u\in {\mathcal
N}(F'(x^\dag)^*)^\perp\subset Y$ is the unique element such that
$x_0-x^\dag=F'(x^\dag)^* u$, $\eta_0>0$ is a constant depending
only on $r$, $\tau$ and $c_i$, and $C_\nu$ is a positive constant
depending only on $r$, $\tau$,  $\nu$ and $c_i$, $i=0, \cdots, 6$.
\end{theorem}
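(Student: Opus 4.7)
The plan is to establish an error recursion for $e_k := x_k^\delta - x^\dag$ that splits into a propagation term, a noise term, and a nonlinear remainder, then exploit the source condition (\ref{7}) with $\nu\ge 1/2$ to obtain sharp estimates, and finally combine with the discrepancy principle (\ref{2.4}) to extract the rate. Setting $A_k := F'(x_k^\delta)$ and $A := F'(x^\dag)$, a direct manipulation of (\ref{3}) yields
$$
e_{k+1} = r_{\alpha_k}(A_k^*A_k)(x_0 - x^\dag) + g_{\alpha_k}(A_k^*A_k)A_k^*(y - y^\delta) - g_{\alpha_k}(A_k^*A_k)A_k^*\bigl[F(x_k^\delta)-F(x^\dag)-A_k e_k\bigr].
$$
Because $\nu\ge 1/2$, hypothesis (\ref{7}) supplies $u\in\mathcal{N}(A^*)^\perp$ with $x_0-x^\dag = A^* u$ and $\|u\|\le \|A\|^{2\nu-1}\|\omega\|$. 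Substituting this into the first term and using the commutator bound (\ref{3.40}) of Assumption \ref{A2.3} to swap $A_k$ for $A$ (at cost $c_6 L\|e_k\|\|u\|$), the remaining piece $r_{\alpha_k}(A^*A)A^* u$ is controlled by $c_3\alpha_k^{1/2}\|u\|$ via (\ref{A2.1.2}); the noise term is bounded by $c_4\alpha_k^{-1/2}\delta$, and the nonlinear remainder by $\tfrac12 c_4 L\alpha_k^{-1/2}\|e_k\|^2$ thanks to Assumption \ref{Lip}.

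The core of the argument is an induction on $0\le k\le k_\delta$ establishing simultaneously that $x_k^\delta\in B_\rho(x^\dag)$, that
$$
\|e_k\|\le C\bigl(\alpha_k^{1/2}\|u\|+\alpha_k^{-1/2}\delta\bigr),
$$
and that a corresponding residual bound $\|F(x_k^\delta)-y^\delta\|\le C(\alpha_k\|u\|+\delta)$ holds. The smallness condition $L\|u\|\le \eta_0$ is precisely what allows the quadratic factor $L\|e_k\|^2/\alpha_k^{1/2}$ produced by the nonlinear remainder to be absorbed back into the linear estimate, and $\rho>4\|x_0-x^\dag\|$ keeps the iterates inside $B_\rho(x^\dag)$ throughout. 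With the induction in force, the strict inequality in (\ref{2.4}) at $k=k_\delta-1$ together with the residual bound yields a lower bound of the form $\alpha_{k_\delta-1}^{\nu+1/2}\ge c\,\delta/\|\omega\|$, which by (\ref{4}) propagates to $\alpha_{k_\delta}$ up to the factor $r^{\nu+1/2}$.

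For general $\nu\in[1/2,\bar\nu-1/2]$, one refines the approximation step by invoking the sharper qualification bound $\|r_{\alpha_k}(A^*A)(A^*A)^\nu\omega\|\le d_\nu \alpha_k^\nu\|\omega\|$ from (\ref{A2.1.3}) (legitimate because $\nu\le\bar\nu$), which replaces the coarse $\alpha_k^{1/2}\|u\|$ in the first term; inserting the lower bound on $\alpha_{k_\delta}$ then produces the advertised rate $C_\nu\|\omega\|^{1/(1+2\nu)}\delta^{2\nu/(1+2\nu)}$. The main obstacle is the commutator book-keeping needed to swap $A_k$ and $A$ inside $r_{\alpha_k}$ and $g_{\alpha_k}$ throughout the recursion using only Lipschitz continuity of $F'$: the only tools available are the four commutator bounds in Assumption \ref{A2.3}, and arranging the induction so that every commutator correction scales like $L\|u\|\cdot\alpha_k^{1/2}$ (rather than contaminating the noise term $\alpha_k^{-1/2}\delta$ or producing uncontrolled $L\|e_k\|$ contributions) is the technical heart of the argument, and is what permits the theorem to dispense with any structural hypothesis beyond Lipschitz on $F'$.
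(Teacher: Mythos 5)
Your setup (the error recursion, the representation $x_0-x^\dag=F'(x^\dag)^*u$ with $\|u\|\le\|F'(x^\dag)\|^{2\nu-1}\|\omega\|$, the induction giving $x_k^\delta\in B_\rho(x^\dag)$, $\|e_k^\delta\|\lesssim \alpha_k^{1/2}\|u\|+\alpha_k^{-1/2}\delta$, a residual bound, and hence a lower bound on $\alpha_{k_\delta}$) matches the first half of the paper's argument. But the final step is where the proof genuinely breaks: you claim that "inserting the lower bound on $\alpha_{k_\delta}$ then produces the advertised rate." The error bound at $k=k_\delta$ contains the approximation term $\alpha_{k_\delta}^{\nu}\|\omega\|$ (or $\alpha_{k_\delta}^{1/2}\|u\|$), which is \emph{increasing} in $\alpha_{k_\delta}$, while the discrepancy principle only yields a \emph{lower} bound on $\alpha_{k_\delta}$ (from $\|F(x_k^\delta)-y^\delta\|>\tau\delta$ for $k<k_\delta$). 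If the residual happens to drop below $\tau\delta$ early, $\alpha_{k_\delta}$ may be of order $\alpha_0$, and nothing in your induction converts the small residual at $k_\delta$ into the required bound $\alpha_{k_\delta}^\nu\|\omega\|\lesssim\|\omega\|^{1/(1+2\nu)}\delta^{2\nu/(1+2\nu)}$. This is precisely what the paper identifies as "the hard part": it is handled by first proving the preliminary rate $\|e_{k_\delta}^\delta\|\lesssim\|u\|^{1/2}\delta^{1/2}$ (Theorem \ref{T2.1}(iii)), the stability estimate $\|x_k^\delta-x_k\|\lesssim\delta/\sqrt{\alpha_k}$, the two-sided comparison $\|e_k\|\sim\|r_{\alpha_k}(\A)e_0\|$ for the noise-free iterates (which uses Assumption \ref{A2.2}), and then the key connecting inequality of Lemma \ref{L3.3}, $\|e_{k_\delta}\|\lesssim\|e_{\bar k_\delta}\|+(\|F(x_{k_\delta})-y\|+\delta)/\sqrt{\alpha_{\bar k_\delta}}$, where $\bar k_\delta\ge k_\delta$ is the a-priori index with $\alpha_{\bar k_\delta}\sim(\delta/\|\omega\|)^{2/(1+2\nu)}$; this last inequality rests on Lemma \ref{L10.1}, i.e.\ on Assumption \ref{A2.1}(b)--(c), which your proposal never invokes. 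Only after this comparison can the approximation term be evaluated at the correctly sized $\alpha_{\bar k_\delta}$.

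A second, related difficulty: your plan to "refine the approximation step" by carrying the qualification bound $\|r_{\alpha_k}(A^*A)(A^*A)^\nu\omega\|\le d_\nu\alpha_k^\nu\|\omega\|$ through the recursion requires commuting $r_{\alpha_k}(F'(x_k^\delta)^*F'(x_k^\delta))$ past $(F'(x^\dag)^*F'(x^\dag))^\nu$ for general $\nu>1/2$. Under only the Lipschitz condition, Assumption \ref{A2.3} provides commutator estimates with at most one factor of $B^*$ (i.e.\ the $\nu=1/2$ level); no analogue for higher powers is available, and this is exactly why the paper runs the entire iteration analysis with the representation $e_0=F'(x^\dag)^*u$ and imports the $\nu$-dependence only through the spectral quantities $\|r_{\alpha_k}(\A)e_0\|$ and $\|r_{\alpha_k}(\A)\A^{1/2}e_0\|$ of the fixed operator $\A$, evaluated via the noise-free iterates (Lemmas \ref{L4.1} and \ref{L4.2}). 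As written, your argument establishes (after patching the lower-bound step along the paper's lines) only the control of the noise term $\delta/\sqrt{\alpha_{k_\delta}}$, not the order optimal rate.
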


Theorem \ref{T4.1} tells us that, under merely the Lipschitz
condition on $F'$, the method (\ref{3}) together with (\ref{2.4})
indeed defines an order optimal regularization method for each
$1/2\le \nu\le \bar{\nu}-1/2$; in case the regularization method
defined by $\{g_\alpha\}$ has infinite qualification the
discrepancy principle (\ref{2.4}) provides order optimal
convergence rates for the full range $\nu \in [ 1/2, \infty )$.
This is one of the main contribution of the present paper.

We remark that under merely the Lipschitz condition on $F'$ we are
not able to prove the similar result as in Theorem \ref{T4.1} if
$x_0-x^\dag$ satisfies weaker source conditions, say (\ref{7}) for
some $\nu<1/2$. Indeed this is still an open problem in the
convergence analysis of regularization methods for nonlinear
ill-posed problems. In order to pursue the convergence analysis
under weaker source conditions, we need stronger conditions on $F$
than Assumption \ref{Lip}. The condition (\ref{a1}) has been used
in \cite{BNS97,H97} to establish the regularization property of
the method defined by (\ref{IRGN}) and (\ref{2.4}), where the
special properties of $g_\alpha(\lambda)=(\lambda+\alpha)^{-1}$
play the crucial roles. In order to study the general method
(\ref{3}) under weaker source conditions, we need the following
two conditions on $F$.

\begin{Assumption}\label{F1}
There exists a positive constant $K_0$ such that
\begin{align*}
F'(x)&=F'(z) R(x, z),\\
\|I-R(x, z)\|&\le K_0\|x-z\|
\end{align*}
for any $x, z \in B_\rho(x^\dag)$.
\end{Assumption}

\begin{Assumption}\label{F2}
There exist positive constants $K_1$ and $K_2$ such that
\begin{align*}
\|[F'(x)-F'(z)]w\|\le K_1\|x-z\|\|F'(z)w\|+K_2\|F'(z)(x-z)\|\|w\|
\end{align*}
for any $x, z\in B_\rho(x^\dag)$ and $w\in X$.
\end{Assumption}

Assumption \ref{F1} has been used widely in the literature of
nonlinear ill-posed problems (see \cite{SEK93,JH99,Jin00,TJ03});
it can be verified for many important inverse problems. Another
frequently used assumption on $F$ is (\ref{a1}) which is indeed
quite restrictive. It is clear that Assumption \ref{F2} is a
direct consequence of (\ref{a1}). In order to illustrate that
Assumption \ref{F2} could be weaker than (\ref{a1}), we consider
the identification of the parameter $c$ in the boundary value
problem
\begin{equation}\label{8.27.1}
\left\{\begin{array}{ll} -\Delta u+c u=f \qquad& \mbox{in } \Omega\\
u=g \qquad& \mbox{on } \partial \Omega
\end{array}\right.
\end{equation}
from the measurement of the state $u$, where $\Omega\subset
{\mathbb R}^n, n\le 3,$ is a bounded domain with smooth boundary
$\partial \Omega$, $f\in L^2(\Omega)$ and $g\in H^{3/2}(\partial
\Omega)$. We assume $c^\dag\in L^2(\Omega)$ is the sought solution.
This problem reduces to solving an equation of the form
(\ref{1}) if we define the nonlinear operator $F$ to be the
parameter-to-solution mapping $F: L^2(\Omega)\to L^2(\Omega),
F(c):=u(c)$ with $u(c)\in H^2(\Omega)\subset L^2(\Omega)$ being
the unique solution of (\ref{8.27.1}). Such $F$ is well-defined on
$$
D(F):=\left\{c\in L^2(\Omega): \|c-\hat{c}\|_{L^2}\le \gamma
\mbox{ for some } \hat{c}\ge 0 \mbox{ a.e.}\right\}
$$
for some positive constant $\gamma>0$. It is well-known that $F$
has Fr\'{e}chet derivative
\begin{equation}\label{8.27.2}
F'(c) h=-A(c)^{-1}(hF(c)), \qquad h\in L^2(\Omega),
\end{equation}
where $A(c): H^2\cap H_0^1\to L^2$ is defined
by $A(c)u:=-\Delta u+c u$ which is an isomorphism uniformly in a ball
$B_\rho(c^\dag)\subset D(F)$ around $c^\dag$. Let $V$ be the dual space of $H^2\cap H_0^1$ with
respect to the bilinear form $ \langle \varphi, \psi\rangle =\int_\Omega
\varphi(x) \psi(x) dx$.  Then $A(c)$ extends to an isomorphism from
$L^2(\Omega)$ to $V$. Since (\ref{8.27.2}) implies for any $c,
d\in B_\rho(c^\dag)$ and $h\in L^2(\Omega)$
$$
\left(F'(c)-F'(d)\right) h=-A(c)^{-1}\left((c-d)F'(d) h\right)
-A(c)^{-1}\left(h(F(c)-F(d))\right),
$$
and since $L^1(\Omega)$ embeds into $V$ due to the restriction $n\le 3$,
we have
\begin{align}\label{8.27.3}
\|(F'(c)-F'(d))h\|_{L^2} &\le \|A(c)^{-1}\left((c-d)F'(d)
h\right)\|_{L^2}+\|A(c)^{-1}\left(h(F(c)-F(d))\right)\|_{L^2}\nonumber\\
&\le C \|(c-d)F'(d)h\|_{V}+C\|h(F(c)-F(d))\|_{V} \nonumber\\
&\le C \|(c-d)F'(d)h\|_{L^1}+C\|h(F(c)-F(d))\|_{L^1}\nonumber\\
&\le C \|c-d\|_{L^2}\|F'(d)h\|_{L^2}+ C\|F(c)-F(d)\|_{L^2}
\|h\|_{L^2}.
\end{align}
On the other hand, note that
$F(c)-F(d)=-A(d)^{-1}\left((c-d)F(c)\right)$, by using
(\ref{8.27.2}) we obtain
$$
F(c)-F(d)-F'(d)(c-d)=-A(d)^{-1}\left((c-d)\left(F(c)-F(d)\right)\right).
$$
Thus, by a similar argument as above,
$$
\|F(c)-F(d)-F'(d)(c-d)\|_{L^2} \le C
\|c-d\|_{L^2}\|F(c)-F(d)\|_{L^2}.
$$
Therefore, if $\rho>0$ is small enough, we have
$\|F(c)-F(d)\|_{L^2}\le C \|F'(d)(c-d)\|_{L^2}$, which together
with (\ref{8.27.3}) verifies Assumption \ref{F2}. The validity of
(\ref{a1}), however, requires $u(c)\ge \kappa>0$ for all $c\in
B_\rho(c^\dag)$, see \cite{HNS95}.

In our next main result, Assumption \ref{F1} and Assumption
\ref{F2} will be used to derive estimates related to $x_k^\delta
-x^\dag$ and $F'(x^\dag)(x_k^\delta-x^\dag)$ respectively.
Although Assumption \ref{F2} does not explore the full strength of
(\ref{a1}), the plus of Assumption \ref{F1} could make our
conditions stronger than (\ref{a1}) in some situations. One
advantage of the use of Assumption \ref{F1} and Assumption
\ref{F2}, however, is that we can carry out the analysis on the
discrepancy principle (\ref{2.4}) for any $\tau>1$, in contrast to
those results in \cite{BNS97,H97} where $\tau$ is required to be
sufficiently large. It is not yet clear if only one of the above
two assumptions is enough for our purpose. From Assumption
\ref{F2} it is easy to see that
\begin{equation}\label{F2.1}
\|F(x)-F(z)-F'(z)(x-z)\|\le
\frac{1}{2}(K_1+K_2)\|x-z\|\|F'(z)(x-z)\|
\end{equation}
and
\begin{equation}\label{F2.2}
\|F(x)-F(z)-F'(z)(x-z)\|\le
\frac{3}{2}(K_1+K_2)\|x-z\|\|F'(x)(x-z)\|.
\end{equation}
for any $x, z\in B_\rho(x^\dag)$.

We still need to deal with some commutators involving $r_\alpha$.
The structure information on $F$ will be incorporated into such
estimates. Thus, instead of Assumption \ref{A2.3}, we need the
following strengthened version.

\begin{Assumption}\label{A6.2}
(a) Under Assumption \ref{F1}, there exists a positive constant
$c_7$ such that
\begin{align}\label{6.2.1}
\left\|r_\alpha\left(F'(x)^*F'(x)\right)
-r_\alpha\left(F'(z)^*F'(z)\right)\right\| \le c_7 K_0\|x-z\|
\end{align}
for all $x, z\in B_\rho(x^\dag)$ and all $\alpha>0$.

(b) Under Assumption \ref{F1} and Assumption \ref{F2}, there
exists a positive constant $c_8$ such that
\begin{align}\label{6.2.2}
\|F'(x)&\left[r_\alpha\left(F'(x)^*F'(x)\right)
-r_\alpha \left(F'(z)^*F'(z)\right)\right]\| \nonumber\\
&\le c_8(K_0+ K_1) \alpha^{1/2} \|x-z\| +c_8
K_2\left(\|F'(x)(x-z)\|+\|F'(z)(x-z)\|\right)
\end{align}
for all $x, z\in B_\rho(x^\dag)$ and all $\alpha>0$.
\end{Assumption}

Now we are ready to state the second main result in this paper
which in particular says that the method (\ref{3}) together with
the discrepancy principle (\ref{2.4}) defines an order optimal
regularization method for each $0<\nu\le \bar{\nu}-1/2$ under
stronger conditions on $F$. We will fix a constant $\gamma_1>c_3
r^{1/2}/(\tau-1)$.

\begin{theorem}\label{T4.5}
Let $\{g_\alpha\}$ and $\{\alpha_k\}$ satisfy Assumption
\ref{A2.1}, (\ref{4}), Assumption \ref{A2.2}  and Assumption
\ref{A6.2}, let $\bar{\nu}\ge 1$ be the qualification of the
linear regularization method defined by $\{g_\alpha\}$,  and let
$F$ satisfy (\ref{2.1}),  (\ref{2.3}), Assumption \ref{F1} and
Assumption \ref{F2} with $\rho>2(1+c_4\gamma_1)\|x_0-x^\dag\|$.
Let $\{x_k^\delta\}$ be defined by (\ref{3}) and let $k_\delta$ be
the first integer satisfying (\ref{2.4}) with $\tau>1$. Then there
exists a constant $\eta_1>0$ depending only on $r$, $\tau$ and
$c_i$, $i=0, \cdots, 8$, such that if
$(K_0+K_1+K_2)\|x_0-x^\dag\|\le \eta_1$ then

(i) If $x_0-x^\dag$ satisfies the H\"{o}lder source condition
(\ref{7}) for some $\omega\in X$ and $0< \nu\le \bar{\nu}-1/2$,
then
\begin{equation}\label{T4.5.1}
\|x_{k_\delta}^\delta-x^\dag\| \le C_\nu
\|\omega\|^{1/(1+2\nu)}\delta^{2\nu/(1+2\nu)},
\end{equation}
where $C_\nu$ is a constant depending only on $r$, $\tau$, $\nu$
and $c_i$, $i=0, \cdots, 8$.

 (ii) If $x_0-x^\dag$ satisfies the logarithmic source
condition (\ref{logsource}) for some $\omega\in X$ and $\mu>0$,
then
\begin{equation}\label{T4.5.2}
\|x_{k_\delta}^\delta-x^\dag\|\le C_\mu
\|\omega\|\left(1+\left|\ln
\frac{\delta}{\|\omega\|}\right|\right)^{-\mu},
\end{equation}
where $C_\mu$ is a constant depending only on $r$, $\tau$, $\mu$,
and $c_i$, $i=0, \cdots, 8$.
\end{theorem}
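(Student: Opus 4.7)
The plan is to set up an error recursion and propagate bounds inductively up to the stopping index $k_\delta$. Write $T := F'(x^\dag)$, $T_k := F'(x_k^\delta)$, and $e_k^\delta := x_k^\delta - x^\dag$. Starting from the definition (\ref{3}) and rewriting $F(x_k^\delta) - y^\delta - T_k(x_k^\delta - x_0)$ as $T_k(x_0 - x^\dag) + G_k + (y - y^\delta)$ with $G_k := F(x_k^\delta) - F(x^\dag) - T_k e_k^\delta$, one gets the fundamental decomposition
\begin{equation*}
e_{k+1}^\delta = r_{\alpha_k}(T_k^*T_k)(x_0 - x^\dag) - g_{\alpha_k}(T_k^*T_k)T_k^*\bigl[G_k + (y-y^\delta)\bigr].
\end{equation*}
This expresses the error as a ``linear'' part governed by the source condition plus a ``nonlinear/noise'' part controlled by $\|G_k\|$ and $\delta$. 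The task is to use Assumption \ref{A6.2} to replace $T_k$ by $T$ inside $r_{\alpha_k}$ and $g_{\alpha_k}$ (up to correction terms involving $K_0 \|e_k^\delta\|$ and $K_{1,2}$ combinations), and then exploit the source condition for $x_0 - x^\dag$.

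Next I would inductively propagate three coupled bounds for $0 \le k \le k_\delta$: a bound on $\|e_k^\delta\|$, a bound on $\|T e_k^\delta\|$ (the ``tangential'' residual), and a bound on $\|F(x_k^\delta) - y^\delta\|$. Using Assumption \ref{F2} via (\ref{F2.1}) and (\ref{F2.2}) we control $\|G_k\|$ by $\tfrac12(K_1+K_2)\|e_k^\delta\|\cdot \|T_k e_k^\delta\|$ and a similar quantity involving $T$; combined with Assumption \ref{F1} this lets us pass between $\|T_k e_k^\delta\|$ and $\|T e_k^\delta\|$. For the linear part, the source condition (\ref{7}) together with (\ref{A2.1.3}) yields $\|r_{\alpha_k}(T^*T)(x_0-x^\dag)\| \le d_\nu \alpha_k^\nu \|\omega\|$ and, via (\ref{A2.1.2}), a similar $\alpha_k^{\nu+1/2}$-bound for $\|T r_{\alpha_k}(T^*T)(x_0-x^\dag)\|$; for the logarithmic case (\ref{logsource}), (\ref{8.25.2}) provides the corresponding $(-\ln(\alpha_k/(2\alpha_0)))^{-\mu}$ bound. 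The noise contribution is estimated by $\|g_{\alpha_k}(T_k^*T_k)T_k^*\|\,\delta \le c_4 \alpha_k^{-1/2} \delta$. Choosing $\eta_1$ small enough, the nonlinear corrections from Assumption \ref{A6.2} and $G_k$ can be absorbed into a fraction of the linear/noise bounds, closing the induction and giving, for all $k \le k_\delta$,
\begin{equation*}
\|T e_k^\delta\| \lesssim \alpha_k^{\nu+1/2}\|\omega\| + \delta, \qquad \|e_k^\delta\| \lesssim \alpha_k^{\nu}\|\omega\| + \alpha_k^{-1/2}\delta,
\end{equation*}
with the logarithmic analogue in case (ii).

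Finally, I would convert these into the stated rates via the discrepancy principle. Using $F(x_k^\delta) - y^\delta = T_k e_k^\delta + (y - y^\delta) + G_k$ together with the stopping criterion (\ref{2.4}) and the fact that $\tau > c_3 r^{1/2}/(\tau-1)\cdot(\tau-1)$ (i.e.\ the choice $\gamma_1 > c_3 r^{1/2}/(\tau-1)$) gives a lower bound that forces $\alpha_{k_\delta - 1}^{\nu+1/2}\|\omega\| \gtrsim \delta$, hence $\alpha_{k_\delta} \lesssim (\delta/\|\omega\|)^{2/(2\nu+1)}$ by Assumption \ref{A2.2} and (\ref{4}); plugging back yields the H\"older rate (\ref{T4.5.1}). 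For the logarithmic case, the analogous balance $\bigl(-\ln(\alpha_{k_\delta-1}/(2\alpha_0))\bigr)^{-\mu}\|\omega\| \gtrsim \delta$ gives (\ref{T4.5.2}). The main obstacle is the simultaneous control of $\|e_k^\delta\|$, $\|T e_k^\delta\|$, and $\|F(x_k^\delta) - y^\delta\|$ through the induction when $\nu < 1/2$: there, Assumption \ref{Lip} is unavailable and one must use Assumption \ref{F2} to bound $G_k$ by the product $\|e_k^\delta\|\,\|T e_k^\delta\|$, which mixes the two quantities and requires careful use of Lemma \ref{L10.1} together with (\ref{6.2.2}) to prevent the bounds from degrading across iterations, especially for small $\nu$ where the source-condition gain is weakest relative to the noise amplification $\alpha_k^{-1/2}\delta$.
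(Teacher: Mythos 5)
There is a genuine gap, and it sits exactly at the point where the whole difficulty of the discrepancy principle lies. From the stopping rule (\ref{2.4}) you only know $\tau\delta<\|F(x_k^\delta)-y^\delta\|$ for $k<k_\delta$; combined with an upper bound for the residual of the form $\|F(x_k^\delta)-y^\delta\|\le (1+\varepsilon)\delta+C\alpha_k^{\nu+1/2}\|\omega\|$ this forces $\alpha_k\gtrsim(\delta/\|\omega\|)^{2/(1+2\nu)}$ for all $k<k_\delta$, i.e.\ a \emph{lower} bound on $\alpha_{k_\delta-1}$ and hence on $\alpha_{k_\delta}$. Your step ``hence $\alpha_{k_\delta}\lesssim(\delta/\|\omega\|)^{2/(2\nu+1)}$'' reverses this inequality and is false in general: the iteration may stop early (the residual can drop below $\tau\delta$ long before $\alpha_k$ reaches the a priori level, e.g.\ when $\omega$ is smoother than assumed), so $\alpha_{k_\delta}$ can be much larger than $(\delta/\|\omega\|)^{2/(1+2\nu)}$. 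Consequently your propagated bound $\|e_{k_\delta}^\delta\|\lesssim\alpha_{k_\delta}^\nu\|\omega\|+\alpha_{k_\delta}^{-1/2}\delta$ does not yield (\ref{T4.5.1}): the lower bound on $\alpha_{k_\delta}$ controls only the noise term, while the approximation term $\alpha_{k_\delta}^\nu\|\omega\|$ is then bounded from below, not above. The same objection applies verbatim to your logarithmic case.

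What is missing is precisely the mechanism the paper builds to handle an ``early'' stop: one must convert the smallness of the residual at $k_\delta$ (namely $\|F(x_{k_\delta}^\delta)-y^\delta\|\le\tau\delta$) into smallness of the error, even though $\alpha_{k_\delta}$ may be large. In the paper this is done by comparing with the noise-free iterates $x_k$ via the stability estimates (\ref{4.27.5})--(\ref{4.27.6}) (so that $\|F(x_{k_\delta})-y\|\lesssim\delta$ and $\|e_{k_\delta}^\delta\|\lesssim\|e_{k_\delta}\|+\delta/\sqrt{\alpha_{k_\delta}}$), by the two-sided bound (\ref{12.4}) $\|e_k\|\sim\|r_{\alpha_k}(\A)e_0\|$, and above all by the monotonicity-type inequality (\ref{12.6}),
\begin{equation*}
\|e_{k_\delta}\|\lesssim \|e_{\bar k_\delta}\|+\frac{1}{\sqrt{\alpha_{\bar k_\delta}}}\,\|F(x_{k_\delta})-y\|,
\end{equation*}
applied with the oracle index $\bar k_\delta\ge k_\delta$ for which $\alpha_{\bar k_\delta}\sim(\delta/\|\omega\|)^{2/(1+2\nu)}$; this inequality rests on Lemma \ref{L10.1}, i.e.\ on Assumption \ref{A2.1}(c), together with Assumption \ref{A6.2}. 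You mention Lemma \ref{L10.1} only in passing, inside the induction for $G_k$, which is not where it is needed; without the comparison between $k_\delta$ and $\bar k_\delta$ (or some equivalent device exploiting the small residual at the stopping index) the argument cannot close. The induction bounds you propose up to $n_\delta$-type indices are in the spirit of the paper's Step 1 and are fine as far as they go, but they are the easy half of the proof.
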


In the statements of Theorem \ref{T4.1} and Theorem \ref{T4.5},
the smallness of $L\|u\|$ and $(K_0+K_1+K_2)\|x_0-x^\dag\|$ are
not specified. However, during the proof of Theorem \ref{T4.1}, we
indeed will spell out all the necessary smallness conditions on
$L\|u\|$. For simplicity of presentation, we will not spell out
the smallness conditions on $(K_0+K_1+K_2)\|x_0-x^\dag\|$ any
more; the readers should be able to figure out such conditions
without any difficulty.

Note that, without any source condition on $x_0-x^\dag$, the above
two theorems do not give the convergence of $x_{k_\delta}^\delta$
to $x^\dag$. The following theorem says that
$x_{k_\delta}^\delta\rightarrow x^\dag$ as $\delta\rightarrow 0$
provided $x_0-x^\dag\in {\mathcal N}(F'(x^\dag))^\perp$. In fact,
it tells more, it says that the convergence rates can even be
improved to $o(\delta^{2\nu/(1+2\nu)})$ if $x_0-x^\dag$ satisfies
(\ref{7}) for $0\le \nu<\bar{\nu}-1/2$.

\begin{theorem}\label{T8.1}
(i) Let all the conditions in Theorem \ref{T4.1} be fulfilled. If
$\bar{\nu}>1$ and $x^\dag-x_0$ satisfies the H\"{o}lder source
condition (\ref{7}) for some $\omega\in {\mathcal
N}(F'(x^\dag))^\perp$ and $1/2\le \nu< \bar{\nu}-1/2$, then
$$
\|x_{k_\delta}^\delta-x^\dag\|\le o(\delta^{2\nu/(1+2\nu)})
$$
as $\delta\rightarrow 0$.

(ii) Let all the conditions in Theorem \ref{T4.5} be fulfilled. If
$x_0-x^\dag$ satisfies (\ref{7}) for some $\omega\in {\mathcal
N}(F'(x^\dag))^\perp$ and $0\le \nu< \bar{\nu}-1/2$, then
$$
\|x_{k_\delta}^\delta-x^\dag\|\le o(\delta^{2\nu/(1+2\nu)})
$$
as $\delta\rightarrow 0$.
\end{theorem}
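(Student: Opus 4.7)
The plan is to strengthen the $O$-estimates of Theorems \ref{T4.1} and \ref{T4.5} to $o$-estimates by combining them with a spectral refinement that exploits the hypothesis $\omega\in\mathcal{N}(F'(x^\dag))^\perp$. Write $A:=F'(x^\dag)$ and let $\{E_\lambda\}$ denote the spectral family of $A^*A$ on $[0,1/2]$. The key auxiliary fact is that $\omega\in\mathcal{N}(A)^\perp$ forces $E_{\{0\}}\omega=0$, so the spectral measure $d\|E_\lambda\omega\|^2$ carries no mass at $\lambda=0$. Combined with the observation that $r_\alpha(\lambda)\le c_0\alpha/\lambda\to 0$ (by Assumption \ref{A2.1}(a)) for every fixed $\lambda>0$, and with the qualification bound (\ref{A2.1.3}), dominated convergence yields
\begin{equation*}
\lim_{\alpha\to 0^+}\alpha^{-\nu}\|(A^*A)^\nu r_\alpha(A^*A)\omega\|=0\qquad\text{for every }0\le\nu<\bar\nu.
\end{equation*}
A parallel statement holds for the companion spectral expressions appearing in the proofs of the previous theorems, e.g.\ for $\alpha^{-(\nu+1/2)}\|(A^*A)^{\nu+1/2}r_\alpha(A^*A)u\|$ under the representation $\omega=A^*u$ with $u\in\mathcal{N}(A^*)^\perp$ used in Theorem \ref{T4.1}.

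Next I would revisit the proofs of Theorems \ref{T4.1} and \ref{T4.5} and identify where the factor $\|\omega\|$ (respectively $\|u\|$) enters the final bound. The discrepancy principle together with the source condition already forces $\alpha_{k_\delta}\to 0$ as $\delta\to 0$ (indeed $\alpha_{k_\delta}=O(\delta^{2/(1+2\nu)})$), and the $O$-rate itself arises from an interpolation-type estimate of the schematic form
\begin{equation*}
\|x_{k_\delta}^\delta-x^\dag\|\le C_1\alpha_{k_\delta}^{\nu}M_\nu(\omega,\alpha_{k_\delta})+C_2\delta\,\alpha_{k_\delta}^{-1/2},
\end{equation*}
where $M_\nu(\omega,\alpha):=\alpha^{-\nu}\|(A^*A)^\nu r_\alpha(A^*A)\omega\|$ (plus companion spectral quantities produced by the commutator estimates of Assumption \ref{A2.3} or Assumption \ref{A6.2}). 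For Part (ii) one applies the spectral lemma directly at index $\nu$; for Part (i) one first rewrites $\omega=A^*u$ and applies the lemma at index $\nu+1/2$, which is admissible because the strict inequality $\nu<\bar\nu-1/2$ ensures $\nu+1/2<\bar\nu$. In either case $M_\nu(\omega,\alpha_{k_\delta})=o(1)$ as $\delta\to 0$, and re-running the same balancing that produced the $O$-rate now yields $\|x_{k_\delta}^\delta-x^\dag\|=o(\delta^{2\nu/(1+2\nu)})$.

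The main obstacle lies in the bookkeeping of the second step. The proofs of Theorems \ref{T4.1} and \ref{T4.5} generate numerous nonlinearity-induced cross-terms via Assumption \ref{Lip} (respectively Assumptions \ref{F1} and \ref{F2}) combined with the commutator estimates of Assumption \ref{A2.3} (respectively Assumption \ref{A6.2}); each such term is naturally majorized by a constant multiple of $\|\omega\|$ rather than by $M_\nu(\omega,\alpha_{k_\delta})$. One must verify, term by term, that every occurrence of $\|\omega\|$ in the error recursion can be replaced either by $M_\nu(\omega,\alpha_{k_\delta})$ or by a companion spectral quantity amenable to the same dominated-convergence argument, without sacrificing the crucial $o(1)$ factor. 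The smallness hypotheses $L\|u\|\le\eta_0$ and $(K_0+K_1+K_2)\|x_0-x^\dag\|\le\eta_1$ then allow the residual cross-terms to be absorbed by an induction on $k\le k_\delta$ paralleling the one already used for the $O$-rate, leaving only the spectral contribution to dominate the final estimate and deliver the claimed $o(\delta^{2\nu/(1+2\nu)})$ bound.
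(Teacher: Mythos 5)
Your spectral lemma is precisely the paper's key ingredient: with $\{E_\lambda\}$ the spectral family of $\A=F'(x^\dag)^*F'(x^\dag)$ and $\omega\in{\mathcal N}(F'(x^\dag))^\perp$, dominated convergence gives $c_\mu(\alpha):=\alpha^{-\mu}\|r_\alpha(\A)\A^\mu\omega\|\rightarrow 0$ as $\alpha\rightarrow 0$ for every $0\le\mu<\bar{\nu}$, and the strict inequality $\nu<\bar{\nu}-1/2$ is needed exactly so that the index $\mu=\nu+1/2$ is admissible. The genuine gap is in how you feed this back into the error analysis: you propose to re-run the proofs of Theorems \ref{T4.1} and \ref{T4.5} replacing ``every occurrence of $\|\omega\|$'' by $M_\nu(\omega,\alpha_{k_\delta})$, you yourself flag this term-by-term bookkeeping as the main obstacle, and you do not carry it out. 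Moreover the route you sketch is not the right one. First, your claim that the discrepancy principle forces $\alpha_{k_\delta}\rightarrow 0$ with $\alpha_{k_\delta}=O(\delta^{2/(1+2\nu)})$ is unjustified: Lemma \ref{L4.2} (resp.\ (\ref{6.17.3})) only yields a \emph{lower} bound on $\alpha_{k_\delta}$; the iteration may stop early, so no upper bound of that form is available and $\alpha_{k_\delta}$ need not tend to $0$. Consequently your schematic estimate $C_1\alpha_{k_\delta}^{\nu}M_\nu(\omega,\alpha_{k_\delta})+C_2\delta\alpha_{k_\delta}^{-1/2}$, evaluated at $\alpha_{k_\delta}$ itself, is not how the $O$-rate is obtained and cannot produce the $o$-rate; the actual mechanism runs through a comparison index $\ge k_\delta$ and the ``hard'' inequality (\ref{5.5.5}) (Lemma \ref{L3.3}, resp.\ (\ref{12.6})). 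Second, many occurrences of $\|\omega\|$ or $\|u\|$ in those proofs (the smallness hypotheses, the stability estimates using $\delta/\alpha_k\le\gamma_0\|u\|$, the absorbed cross-terms) neither can nor need to be replaced by $o(1)$ spectral quantities, so the wholesale substitution you envisage is not a viable repair strategy.

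The paper closes the argument with a single application of the dominated-convergence fact and no re-examination of the earlier proofs. Define $\hat{k}_\delta$ as the \emph{first} index with $\|r_{\alpha_{\hat{k}_\delta}}(\A)\A^{1/2}e_0\|+\alpha_{\hat{k}_\delta}^{1/2}\|r_{\alpha_{\hat{k}_\delta}}(\A)e_0\|\le c\delta$, with $c$ chosen so that Lemma \ref{L4.2} (resp.\ (\ref{6.17.3})) forces $k_\delta\le\hat{k}_\delta$; clearly $\hat{k}_\delta\rightarrow\infty$ as $\delta\rightarrow 0$. The already-proved generic-constant estimates (Lemma \ref{L3.2}, Lemma \ref{L3.3}, (\ref{3.3}) in case (i); (\ref{4.27.5}), (\ref{4.27.6}), (\ref{12.4}), (\ref{12.6}) in case (ii)) together with $\|F(x_{k_\delta})-y\|\lesssim\delta$ give $\|e_{k_\delta}^\delta\|\lesssim\|r_{\alpha_{\hat{k}_\delta}}(\A)e_0\|+\delta/\sqrt{\alpha_{\hat{k}_\delta}}\lesssim\delta/\sqrt{\alpha_{\hat{k}_\delta}}$, the last step by the defining inequality of $\hat{k}_\delta$ — note that all $\|\omega\|$-dependence has now been funneled into the single quantity $\alpha_{\hat{k}_\delta}$. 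Only now does the spectral lemma enter: since the defining inequality fails at $\hat{k}_\delta-1$, (\ref{4}) and Assumption \ref{A2.2} give $\delta\lesssim\alpha_{\hat{k}_\delta}^{\nu+1/2}\bigl(c_\nu(\alpha_{\hat{k}_\delta})+c_{\nu+1/2}(\alpha_{\hat{k}_\delta})\bigr)$, hence $\delta/\sqrt{\alpha_{\hat{k}_\delta}}\lesssim\bigl(c_\nu(\alpha_{\hat{k}_\delta})+c_{\nu+1/2}(\alpha_{\hat{k}_\delta})\bigr)^{1/(1+2\nu)}\delta^{2\nu/(1+2\nu)}=o(\delta^{2\nu/(1+2\nu)})$. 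If you restructure your argument this way — placing the $o(1)$ factor solely in the lower bound for $\alpha_{\hat{k}_\delta}$ — your lemma suffices and the bookkeeping you worried about disappears.
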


Theorem \ref{T4.1}, Theorem \ref{T4.5} and Theorem \ref{T8.1} will
be proved in Sections 3, 4 and 5 respectively. In the following we
will give some remarks.

\begin{remark}
A comprehensive overview on iterative regularization methods for
nonlinear ill-posed problems may be found in the recent book
\cite{KNS08}. In particular, convergence and convergence rates for
the general method (1.3) are obtained in \cite[Theorem
4.16]{KNS08} in case of a priori stopping rules under
suitable nonlinearity assumptions on $F$.
\end{remark}

\begin{remark}
In \cite{T97} Tautenhahn introduced a general regularization
scheme for (\ref{1}) by defining the regularized solutions
$x_\alpha^\delta$ as a fixed point of the nonlinear equation
\begin{equation}
x=x_0-g_\alpha\left(F'(x)^*F'(x)\right)F'(x)^*
\left(F(x)-y^\delta-F'(x)(x-x_0)\right),
\end{equation}
where $\alpha>0$ is the regularization parameter. When $\alpha$ is
determined by a Morozov's type discrepancy principle, it was shown
in \cite{T97} that the method is order optimal for each $0<\nu\le
\bar{\nu}/2$ under certain conditions on $F$. We point out that
the technique developed in the present paper can be used to
analyze such method; indeed we can even show that, under merely
the Lipschitz condition on $F'$, the method in \cite{T97} is order
optimal for each $1/2\le \nu\le \bar{\nu}-1/2$, which improves the
corresponding result.
\end{remark}

\begin{remark}
Alternative to (\ref{3}), one may consider the inexact Newton type
methods
\begin{equation}\label{8.25.2008}
x_{k+1}^\delta =
x_k^\delta-g_{\alpha_k}\left(F'(x_k^\delta)^*F'(x_k^\delta)\right)
F'(x_k^\delta)^*\left(F(x_k^\delta)-y^\delta\right)
\end{equation}
which can be derived by applying the regularization method defined
by $\{g_\alpha\}$ to (\ref{n1}) with the current iterate
$x_k^\delta$ as an initial guess. Such methods have first been
studied by Hanke in \cite{H97a,H97b} where the regularization
properties of the Levenberg-Marquardt algorithm and the Newton-CG
algorithm have been established without giving convergence rates
when the sequence $\{\alpha_k\}$ is chosen adaptively during
computation and the discrepancy principle is used as a stopping
rule. The general methods (\ref{8.25.2008}) have been considered
later by Rieder in \cite{R99,R01}, where $\{\alpha_k\}$ is
determined by a somewhat different adaptive strategy; certain
sub-optimal convergence rates have been derived when $x_0-x^\dag$
satisfies (\ref{7}) with $\eta<\nu\le 1/2$ for some
problem-dependent number $0<\eta<1/2$, while it is not yet clear if the
convergence can be established under weaker source conditions. The
convergence analysis of (\ref{8.25.2008}) is indeed far from
complete. The technique in the present paper does not work for
such methods.
\end{remark}

Throughout this paper we will use $\{x_k\}$ to denote the iterated
solutions defined by (\ref{3}) corresponding to the noise free
case. i.e.
\begin{equation}\label{1.8}
 x_{k+1}=x_0-g_{\alpha_k}\left(F'(x_k)^*F'(x_k)\right)F'(x_k)^*
\left(F(x_k)-y-F'(x_k)(x_k-x_0)\right).
\end{equation}
We will also use the notations
\begin{align*}
\A:=F'(x^\dag)^*F'(x^\dag), & \quad \A_k:=F'(x_k)^*F'(x_k),  \quad
\A_k^\delta:=F'(x_k^\delta)^* F'(x_k^\delta),\\
\B:=F'(x^\dag)F'(x^\dag)^*, & \quad \B_k:=F'(x_k)F'(x_k)^*,  \quad
\B_k^\delta:=F'(x_k^\delta) F'(x_k^\delta)^*,
\end{align*}
and
\begin{eqnarray*}
e_k:=x_k-x^\dag, \qquad  e_k^\delta:=x_k^\delta-x^\dag.
\end{eqnarray*}
For ease of exposition, we will use $C$ to denote a generic
constant depending only on $r$. $\tau$ and $c_i$, $i=0, \cdots,
8$, we will also use the convention $\Phi\lesssim \Psi$ to mean
that $\Phi\le C \Psi$ for some generic constant $C$. Moreover,
when we say $L\|u\|$ (or $(K_0+K_1+K_2)\|e_0\|$) is sufficiently
small we will mean that $L\|u\|\le \eta$ (or
$(K_0+K_1+K_2)\|e_0\|\le \eta$) for some small positive constant
$\eta$ depending only on $r$, $\tau$ and $c_i$, $i=0, \cdots, 8$.

\section{\bf Proof of Theorem \ref{T4.1}}
\setcounter{equation}{0}

In this section we will give the proof of Theorem \ref{T4.1}. The
main idea behind the proof consists of the following steps:\\


$\bullet$ Show the method defined by (\ref{3}) and (\ref{2.4}) is
well-defined.

$\bullet$ Establish the stability estimate
$\|x_k^\delta-x_k\|\lesssim  \delta/\sqrt{\alpha_k}$. This enables
us to write $\|e_{k_\delta}^\delta\|\lesssim
\|e_{k_\delta}\|+\delta/\sqrt{\alpha_{k_\delta}}$.

$\bullet$ Establish $\alpha_{k_\delta}\ge
C_\nu(\delta/\|\omega\|)^{2/(1+2\nu)}$ under the source condition
(\ref{7}) for $1/2\le \nu\le \bar{\nu}-1/2$. This is an easy step
although it requires nontrivial arguments.

$\bullet$ Show $\|e_{k_\delta}\|\le C_\nu
\|\omega\|^{1/(1+2\nu)}\delta^{2\nu/(1+2\nu)}$, which is the hard
part in the whole proof. In order to achieve this, we pick an
integer $\bar{k}_\delta$ such that $k_\delta\le \bar{k}_\delta$
and $\alpha_{\bar{k}_\delta}\sim
(\delta/\|\omega\|)^{2/(1+2\nu)}$. Such $\bar{k}_\delta$ will be
proved to exist. Then we connect $\|e_{k_\delta}\|$ and
$\|e_{\bar{k}_\delta}\|$ by establishing the inequality
\begin{equation}\label{5.5.5}
\|e_{k_\delta}\|\lesssim
\|e_{\bar{k}_\delta}\|+\frac{1}{\sqrt{\alpha_{\bar{k}_\delta}}}
\left(\|F(x_{k_\delta})-y\|+\delta\right).
\end{equation}
The right hand side can be easily estimated by the desired bound.

$\bullet$ In order to establish (\ref{5.5.5}), we need to
establish the preliminary convergence rate estimate
$\|e_{k_\delta}^\delta\|\lesssim \|u\|^{1/2}\delta^{1/2}$ when
$x_0-x^\dag=F'(x^\dag)^* u$ for some
$u\in {\mathcal N}(F'(x^\dag)^*)^\perp \subset Y$.\\

Therefore, in order to complete the proof of Theorem \ref{T4.1},
we need to establish various estimates.

\subsection{\bf  A first result on convergence rates}

In this subsection we will derive the convergence rate
$\|e_{k_\delta}^\delta\|\lesssim \|u\|^{1/2} \delta^{1/2}$ under
the source condition
\begin{equation}\label{3.00.1}
x_0-x^\dag=F'(x^\dag)^* u, \quad u\in {\mathcal N}(F'(x^\dag)^*)^\perp.
\end{equation}
To this end, we introduce $\tilde{k}_\delta$ to be the first
integer such that
\begin{equation}\label{2.5}
\alpha_{\tilde{k}_\delta} \le \frac{\delta}{\gamma_0
\|u\|}<\alpha_k, \qquad 0\le k<\tilde{k}_\delta,
\end{equation}
where $\gamma_0$ is a number satisfying $\gamma_0>c_0 r/(\tau-1)$,
and $c_0$ is the constant from Assumption \ref{A2.1} (a). Because
of (\ref{4}), such $\tilde{k}_\delta$ is well-defined.

\begin{theorem}\label{T2.1}
Let $\{g_\alpha\}$ and $\{\alpha_k\}$ satisfy Assumption
\ref{A2.1}(a), Assumption \ref{A2.2}, (\ref{3.40}) and (\ref{4}),
and let $F$ satisfy (\ref{2.1}), (\ref{2.3}) and Assumption
\ref{Lip} with $\rho>4\|x_0-x^\dag\|$. Let $\{x_k^\delta\}$ be
defined by (\ref{3}) and let $k_\delta$ be determined by the
discrepancy principle (\ref{2.4}) with $\tau>1$. If $x_0-x^\dag$
satisfies (\ref{3.00.1}) and if $L\|u\|$ is sufficiently small,
then

\begin{enumerate}
\item[(i)]  For all $0\le k\le \tilde{k}_\delta$ there hold
\begin{equation}\label{2.6}
x_k^\delta\in B_\rho(x^\dag) \qquad \mbox{and} \qquad
\|e_k^\delta\|\le 2(c_3+c_4\gamma_0) r^{1/2} \alpha_k^{1/2} \|u\|.
\end{equation}

\item[(ii)] $k_\delta\le \tilde{k}_\delta$, i.e. the discrepancy
principle (\ref{2.4}) is well-defined.

\item[(iii)] There exists a generic constant $C>0$ such that
$$
\|e_{k_\delta}^\delta\|\le C\|u\|^{1/2}\delta^{1/2}.
$$
\end{enumerate}
\end{theorem}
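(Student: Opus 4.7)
My plan is to prove (i), (ii), (iii) in sequence; part (iii) is the main technical hurdle.

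For (i) I would induct on $k$. Starting from the iteration identity
\[
e_{k+1}^\delta = r_{\alpha_k}(\A_k^\delta)(x_0 - x^\dag) - g_{\alpha_k}(\A_k^\delta) F'(x_k^\delta)^*\bigl(R_k^\delta + (y - y^\delta)\bigr),
\]
where $R_k^\delta := F(x_k^\delta) - F(x^\dag) - F'(x_k^\delta)e_k^\delta$ with $\|R_k^\delta\|\le \tfrac{L}{2}\|e_k^\delta\|^2$, I substitute $x_0 - x^\dag = F'(x^\dag)^*u$. Splitting $r_{\alpha_k}(\A_k^\delta)F'(x^\dag)^* = r_{\alpha_k}(\A)F'(x^\dag)^* + [r_{\alpha_k}(\A_k^\delta) - r_{\alpha_k}(\A)]F'(x^\dag)^*$ and using $\|r_\alpha(\A)F'(x^\dag)^*\|\le c_3\alpha^{1/2}$, the commutator estimate \eqref{3.40} (giving $c_6 L\|e_k^\delta\|$), and $\|g_\alpha(\A_k^\delta)F'(x_k^\delta)^*\|\le c_4\alpha^{-1/2}$ yields
\[
\|e_{k+1}^\delta\| \le c_3\alpha_k^{1/2}\|u\| + c_6 L\|e_k^\delta\|\,\|u\| + \tfrac{c_4 L}{2}\alpha_k^{-1/2}\|e_k^\delta\|^2 + c_4\alpha_k^{-1/2}\delta.
\]
For $k < \tilde{k}_\delta$, $c_4\alpha_k^{-1/2}\delta\le c_4\gamma_0\alpha_k^{1/2}\|u\|$, and inserting the inductive bound closes the recursion into $\|e_{k+1}^\delta\|\le \bar M\alpha_{k+1}^{1/2}\|u\|$ with $\bar M := 2(c_3 + c_4\gamma_0)r^{1/2}$, provided $L\|u\|$ is small enough; the containment $x_{k+1}^\delta\in B_\rho(x^\dag)$ follows from $\rho > 4\|x_0-x^\dag\|$.

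For (ii) I would show $\|F(x_{\tilde{k}_\delta}^\delta) - y^\delta\|\le \tau\delta$, forcing $k_\delta\le \tilde{k}_\delta$. Decomposing the residual via Taylor expansion around $x^\dag$ reduces the task to bounding $\|F'(x^\dag)e_{\tilde{k}_\delta}^\delta\|$. Substituting the iteration formula and using the identity $F'(x^\dag)r_\alpha(\A)F'(x^\dag)^*u = r_\alpha(\B)\B u$ (of norm at most $c_0\alpha\|u\|$) together with the commutator estimate to replace $\A$ by $\A_{\tilde{k}_\delta-1}^\delta$, the dominant contribution is $c_0\alpha_{\tilde{k}_\delta-1}\|u\| + \delta$. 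Since $\alpha_{\tilde{k}_\delta-1}\le r\alpha_{\tilde{k}_\delta}\le r\delta/(\gamma_0\|u\|)$, this is at most $(c_0 r/\gamma_0 + 1)\delta$, which the choice $\gamma_0 > c_0 r/(\tau-1)$ renders strictly below $\tau\delta$ after absorbing $O(L\|u\|)$ corrections.

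For (iii), the discrepancy bound and Taylor around $x^\dag$ give $\|F'(x^\dag)e_{k_\delta}^\delta\|\le (\tau+1)\delta + \tfrac{L}{2}\|e_{k_\delta}^\delta\|^2$. When $k_\delta = 0$, $\|e_0\|^2 = (F'(x^\dag)e_0, u)\le \|u\|[(\tau+1)\delta + \tfrac{L}{2}\|e_0\|^2]$ yields $\|e_0\|^2\le C\|u\|\delta$ by absorption. For $k_\delta\ge 1$ I would write
\[
\|e_{k_\delta}^\delta\|^2 = (e_{k_\delta}^\delta, e_0) + (e_{k_\delta}^\delta, x_{k_\delta}^\delta - x_0),
\]
expand $x_{k_\delta}^\delta - x_0$ via the iteration, and exploit the algebraic identity $\A_{k_\delta-1}^\delta g_{\alpha_{k_\delta-1}}(\A_{k_\delta-1}^\delta) = I - r_{\alpha_{k_\delta-1}}(\A_{k_\delta-1}^\delta)$: the source-condition piece $F'(x_{k_\delta-1}^\delta)F'(x^\dag)^*u$ inside $x_{k_\delta}^\delta - x_0$ contributes precisely $-(F'(x^\dag)e_{k_\delta}^\delta, u)$, cancelling $(e_{k_\delta}^\delta, e_0)$. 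What survives,
\[
\|e_{k_\delta}^\delta\|^2 = (F'(x^\dag)r_{\alpha_{k_\delta-1}}(\A_{k_\delta-1}^\delta)e_{k_\delta}^\delta, u) - (F'(x_{k_\delta-1}^\delta)g_{\alpha_{k_\delta-1}}(\A_{k_\delta-1}^\delta)e_{k_\delta}^\delta, R_{k_\delta-1}^\delta + (y-y^\delta)),
\]
is estimated by splitting $F'(x^\dag)r_\alpha(\A_{k-1}^\delta) = r_\alpha(\B)F'(x^\dag) + F'(x^\dag)[r_\alpha(\A_{k-1}^\delta) - r_\alpha(\A)]$ (commutator at cost $L\|e_{k_\delta-1}^\delta\|$), using $\|F'(x_{k_\delta-1}^\delta)g_{\alpha_{k_\delta-1}}(\A_{k_\delta-1}^\delta)\|\le c_4\alpha_{k_\delta-1}^{-1/2}$, and invoking $\alpha_{k_\delta-1}^{-1/2}\delta < \sqrt{\gamma_0\|u\|\delta}$ (from $k_\delta - 1 < \tilde{k}_\delta$). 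Careful AM-GM absorption, with smallness of $L\|u\|$ to tame the residual nonlinear terms, delivers $\|e_{k_\delta}^\delta\|^2\le C\|u\|\delta$. The main obstacle is uncovering the cancellation: without it, the source-condition contribution generates a term of order $\alpha_{k_\delta-1}\|u\|^2$ that cannot be bounded by $\|u\|\delta$, since $k_\delta\le \tilde{k}_\delta$ yields only a lower, not upper, bound on $\alpha_{k_\delta-1}$.
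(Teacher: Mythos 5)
Parts (i) and (ii) of your plan follow the paper's line, but two details you wave at are not automatic. In (i), the containment $x_{k+1}^\delta\in B_\rho(x^\dag)$ does not follow from the inductive bound $\|e_{k+1}^\delta\|\le \bar M\alpha_{k+1}^{1/2}\|u\|$: the quantity $\alpha_{k}^{1/2}\|u\|$ need not be smaller than $\rho$ (when $L$ is tiny, $\|u\|$ may be huge while $L\|u\|$ and $\|e_0\|=\|F'(x^\dag)^*u\|$ stay small), and the paper closes this with a second recursion giving $\|e_l^\delta\|\le\|e_0\|+c_4(\gamma_0\|u\|\delta)^{1/2}+\frac12\rho<\rho$. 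In (ii), "using the commutator estimate to replace $\A$ by $\A^\delta_{\tilde k_\delta-1}$" between two copies of $F'(x^\dag)$ is not covered by (\ref{3.40}), the only commutator hypothesis of this theorem: paying $\|F'(x^\dag)\|\le c_3\alpha_0^{1/2}$ for the outer factor yields an error of order $L\|u\|^2\alpha_0^{1/2}\alpha_k^{1/2}$, which is not $O(\alpha_k\|u\|)$ as $\alpha_k\to0$, so it cannot be absorbed by smallness of $L\|u\|$; you would need the sandwiched estimate (\ref{3.42}) (not assumed here), or the paper's rearrangement, which avoids commutators altogether by writing $F'(x^\dag)=F'(x_k^\delta)+[F'(x^\dag)-F'(x_k^\delta)]$ on both sides and using $F'(x_k^\delta)r_{\alpha_k}(\A_k^\delta)F'(x_k^\delta)^*=r_{\alpha_k}(\B_k^\delta)\B_k^\delta$ with purely Lipschitz corrections of size $L\|u\|\,\alpha_k\|u\|$.

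The genuine gap is in (iii). Your cancellation identity is correct and is a different (and elegant) starting point from the paper's, but the surviving terms do not all close. The commutator correction in the first pairing costs, by (\ref{3.40}), a term $c_6L\|u\|\,\|e_{k_\delta-1}^\delta\|\,\|e_{k_\delta}^\delta\|$, and the Taylor remainder in the second pairing costs $c_4\alpha_{k_\delta-1}^{-1/2}\|e_{k_\delta}^\delta\|\cdot\frac L2\|e_{k_\delta-1}^\delta\|^2\le C L\|u\|\,\|e_{k_\delta-1}^\delta\|\,\|e_{k_\delta}^\delta\|$ after inserting (i). The only bound available for $\|e_{k_\delta-1}^\delta\|$ is $\bar M\alpha_{k_\delta-1}^{1/2}\|u\|$, and $k_\delta\le\tilde k_\delta$ gives only a \emph{lower} bound on $\alpha_{k_\delta-1}$ (it may be as large as $\alpha_0$); AM--GM then leaves a contribution of order $L^2\|u\|^4\alpha_{k_\delta-1}$, which is not $O(\|u\|\delta)$ as $\delta\to0$. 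This is exactly the obstruction you yourself flag for the uncancelled source term, reappearing at the next order, and smallness of the $\delta$-independent quantity $L\|u\|$ cannot cure it. What is missing is a comparison of consecutive errors, $\|e_{k_\delta-1}^\delta\|\lesssim\|e_{k_\delta}^\delta\|+\|u\|^{1/2}\delta^{1/2}$; this is precisely where the paper invokes Assumption \ref{A2.2} (a hypothesis your proposal never uses) together with the scaling condition $\|r_{\alpha_0}(\A)e_0\|\ge\frac34\|e_0\|$, to prove $\|e_k^\delta\|\le 2c_5\|r_{\alpha_k}(\A_k^\delta)e_0\|+C\|u\|^{1/2}\delta^{1/2}$ (see (\ref{2.14})) and from there the key inequality $\|e_k^\delta\|\lesssim\|u\|^{1/2}\|F(x_k^\delta)-y^\delta\|^{1/2}+\|u\|^{1/2}\delta^{1/2}$ for all $0<k\le\tilde k_\delta$, which gives (iii) at $k=k_\delta$. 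Without an ingredient of this type your absorption step fails, so the proof of (iii) as proposed is incomplete.
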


\begin{proof}
We first prove (i). Note that $\rho>4\|x_0-x^\dag\|$, it follows
from (\ref{3.00.1}) and (\ref{2.3}) that (\ref{2.6}) is trivial
for $k=0$. Now for any fixed integer $0<l\le \tilde{k}_\delta$, we
assume that (\ref{2.6}) is true for all $0\le k<l$. It follows
from the definition (\ref{3}) of $\{x_k^\delta\}$ that
\begin{equation}\label{2.7}
e_{k+1}^\delta=r_{\alpha_k} (\A_k^\delta)e_0
-g_{\alpha_k}(\A_k^\delta) F'(x_k^\delta)^*
\left(F(x_k^\delta)-y^\delta-F'(x_k^\delta) e_k^\delta\right).
\end{equation}
Using (\ref{3.00.1}), Assumption \ref{Lip}, Assumption
\ref{A2.1}(a), (\ref{A2.1.2}) and (\ref{1.3}) we obtain
\begin{align*}
\|e_{k+1}^\delta\| &\le  \|r_{\alpha_k}(\A_k^\delta)
F'(x_k^\delta)^* u\|
+\|r_{\alpha_k}(\A_k^\delta)[F'(x^\dag)^*-F'(x_k^\delta)^*] u\|\\
&\quad +c_4 \alpha_k^{-1/2}
\|F(x_k^\delta)-y^\delta-F'(x_k^\delta)
e_k^\delta \|\\
& \le c_3\alpha_k^{1/2}\|u\|+ L\|u\|\|e_k^\delta\| +
\frac{1}{2}c_4 L\|e_k^\delta\|^2\alpha_k^{-1/2} +c_4\delta
\alpha_k^{-1/2}.
\end{align*}
Note that $\delta \alpha_k^{-1}\le \gamma_0 \|u\|$ for $0\le
k<\tilde{k}_\delta$. Note also that $\alpha_k\le r\alpha_{k+1}$ by
(\ref{4}). Therefore, by using (\ref{2.6}) with $k=l-1$, we obtain
\begin{align*}
\|e_l^\delta\|&\le r^{1/2}\alpha_l^{1/2}\left[
(c_3+c_4\gamma_0)\|u\| + L\|u\|\frac{\|e_{l-1}^\delta\|}
{\sqrt{\alpha_{l-1}}} +\frac{1}{2} c_4 L
\left(\frac{\|e_{l-1}^\delta\|}
{\sqrt{\alpha_{l-1}}}\right)^2\right] \\
&\le 2(c_3+c_4 \gamma_0) r^{1/2} \alpha_l^{1/2} \|u\|
\end{align*}
if  $L\|u\|$ is so small that
\begin{equation}\label{s1}
2\left(r^{1/2} +(c_3+c_4\gamma_0) c_4 r\right) L\|u\|\le 1.
\end{equation}
By using (\ref{2.7}), (\ref{A2.1.2}), Assumption \ref{Lip},
(\ref{1.3}),  Assumption \ref{A2.1}(a), (\ref{2.6}) with $k=l-1$
and (\ref{s1}), we also obtain
\begin{align*}
\|e_l^\delta\|&\le \|r_{\alpha_{l-1}}(\A_{l-1}^\delta) e_0\|
+c_4\delta \alpha_{l-1}^{-1/2} +\frac{1}{2}c_4
L\|e_{l-1}^\delta\|^2\alpha_{l-1}^{-1/2}\\
&\le  \|e_0\|+ c_4 \gamma_0^{1/2} \|u\|^{1/2}\delta^{1/2}
+(c_3+c_4 \gamma_0) c_4 r^{1/2} L\|u\|\|e_{l-1}^\delta\|\\
&\le  \|e_0\|+ c_4 \gamma_0^{1/2} \|u\|^{1/2}\delta^{1/2}
+\frac{1}{2} \rho
\end{align*}
Therefore, by using $\rho>4\|e_0\|$, we have
$$
\|e_k^\delta\| \le \frac{3}{4}\rho+ c_4 \gamma_0^{1/2}
\|u\|^{1/2}\delta^{1/2}<\rho
$$
if $\delta>0$ is small enough. Thus (\ref{2.6}) is also true for
all $k=l$. As $l\le \tilde{k}_\delta$ has been arbitrary, we have
completed the proof of (i).

Next we prove (ii) by showing that $k_\delta\le \tilde{k}_\delta$.
From (\ref{2.7}) and (\ref{3.00.1}) we have for $0\le
k<\tilde{k}_\delta$ that
\begin{align*}
F'(x^\dag) e_{k+1}^\delta&-y^\delta+y = F'(x_k^\delta)
r_{\alpha_k}(\A_k^\delta)\left[F'(x_k^\delta)^*
+\left(F'(x^\dag)^*-F'(x_k^\delta)^*\right)\right] u\\
&  +\left[F'(x^\dag)-F'(x_k^\delta)\right]
r_{\alpha_k}(\A_k^\delta)
\left[F'(x_k^\delta)^*+\left(F'(x^\dag)^*
-F'(x_k^\delta)^*\right)\right] u\\ &
-\left[F'(x^\dag)-F'(x_k^\delta)\right]
g_{\alpha_k}(\A_k^\delta)F'(x_k^\delta)^*
\left[F(x_k^\delta)-y^\delta-F'(x_k^\delta) e_k^\delta\right]\\
&  -g_{\alpha_k}(\B_k^\delta)\B_k^\delta
\left[F(x_k^\delta)-y-F'(x_k^\delta)e_k^\delta\right]\\
& -r_{\alpha_k}(\B_k^\delta)(y^\delta-y).
\end{align*}
By using Assumption \ref{Lip}, Assumption \ref{A2.1}(a),
(\ref{A2.1.2}), (\ref{1.3}) and (\ref{2.6}), and noting that
$\delta/\alpha_k\le \gamma_0\|u\|$, we obtain
\begin{align*}
\|F'(x^\dag) e_{k+1}^\delta-y^\delta+y\| &\le  \delta+ c_0
\alpha_k \|u\|+ 2c_3 L\|u\| \alpha_k^{1/2}\|e_k^\delta\|
+ L^2\|u\|\|e_k^\delta\|^2\\
&\quad  + c_4 L\|e_k^\delta\|\delta\alpha_k^{-1/2}
 + \frac{1}{2} c_4 L^2 \alpha_k^{-1/2} \|e_k^\delta\|^3
 + \frac{1}{2} L\|e_k^\delta\|^2 \\
&\le \delta+ \left(c_0 +\varepsilon_1\right) \alpha_k\|u\|,
\end{align*}
where
\begin{align*}
\varepsilon_1=& \left[2r^{1/2}(c_3+c_4 \gamma_0) (2c_3+c_4
\gamma_0)+2(c_3+c_4\gamma_0)^2 r\right] L\|u\|\\
&+ 4\left[(c_3+c_4 \gamma_0)^2 r+(c_3+c_4 \gamma_0)^3 c_4
r^{3/2}\right] L^2 \|u\|^2.
\end{align*}
From (\ref{1.3}), (\ref{3.00.1}) and (\ref{2.3}) we have
$\|F'(x^\dag)e_0-y^\delta+y\|\le \delta+\|\A u\|\le
\delta+c_0\alpha_0 \|u\|$. Thus, by using (\ref{4}),
$$
\|F'(x^\dag)e_k^\delta-y^\delta+y\| \le \delta+
r\left(c_0+\varepsilon_1\right) \alpha_k \|u\|, \quad 0\le k\le
\tilde{k}_\delta.
$$
Consequently
\begin{align*}
\|F(x_{\tilde{k}_\delta}^\delta)-y^\delta\| &\le
\|F'(x^\dag)e_{\tilde{k}_\delta}^\delta-y^\delta+y\|
+\|F(x_{\tilde{k}_\delta}^\delta)-y -F'(x^\dag)
e_{\tilde{k}_\delta}^\delta\|\\
&\le \delta +r\left(c_0+\varepsilon_1\right)
\alpha_{\tilde{k}_\delta}\|u\|
+\frac{1}{2}L\|e_{\tilde{k}_\delta}^\delta\|^2\\
&\le \delta +r\left(c_0+\varepsilon_1+2(c_3+c_4\gamma_0)^2
rL\|u\|\right) \alpha_{\tilde{k}_\delta}\|u\|\\
&\le \delta +r\left(c_0+\varepsilon_1+2(c_3+c_4 \gamma_0)^2 r
L\|u\|\right) \gamma_0^{-1}\delta\\
&\le\tau \delta
\end{align*}
if $L\|u\|$ is so small that
$$
\varepsilon_1+2(c_3+c_4\gamma_0)^2 r L\|u\|\le \frac{(\tau-1)
\gamma_0-c_0 r}{r}.
$$
By the definition of $k_\delta$, it follows that $k_\delta\le
\tilde{k}_\delta$.

Finally we are in a position to derive the convergence rate in
(iii). If $k_\delta=0$, then, by the definition of $k_\delta$, we
have $\|F(x_0)-y^\delta\|\le \tau \delta$. This together with
Assumption \ref{Lip} and (\ref{1.3}) gives
$$
\|F'(x^\dag) e_0\|\le \|F(x_0)-y-F'(x^\dag) e_0\|+\|F(x_0)-y\| \le
\frac{1}{2}L\|e_0\|^2 +(\tau+1)\delta.
$$
Thus, by using (\ref{3.00.1}), we have
\begin{align*}
\|e_0\|&=(e_0, F'(x^\dag)^*u)^{1/2}=(F'(x^\dag) e_0, u)^{1/2}
\le \|F'(x^\dag) e_0\|^{1/2} \|u\|^{1/2}\\
&\le \sqrt{\frac{1}{2} L\|u\|} \|e_0\|+ \sqrt{\tau+1}
\|u\|^{1/2}\delta^{1/2}.
\end{align*}
By assuming that $L\|u\|\le 1$, we obtain
$\|e_{k_\delta}^\delta\|=\|e_0\|\lesssim \|u\|^{1/2}\delta^{1/2}$.

Therefore we will assume $k_\delta>0$ in the following argument.
It follows from (\ref{2.7}), (\ref{A2.1.2}), Assumption \ref{Lip}
and (\ref{2.6}) that for $0\le k<\tilde{k}_\delta$
\begin{align}\label{2.11}
\|e_{k+1}^\delta\|&\le \|r_{\alpha_k}(\A_k^\delta) e_0\|+ c_4
\delta \alpha_k^{-1/2}
+\frac{1}{2}c_4 L\|e_k^\delta\|^2\alpha_k^{-1/2} \nonumber\\
&\le \|r_{\alpha_k}(\A_k^\delta) e_0\|+c_4 (\gamma_0
\|u\|\delta)^{1/2} +(c_3+c_4 \gamma_0) c_4 r^{1/2}
L\|u\|\|e_k^\delta\|.
\end{align}
By (\ref{3.00.1}), (\ref{3.40}) in Assumption \ref{A2.3}, and
Assumption \ref{Lip} we have
\begin{align}\label{2.12}
\|r_{\alpha_k}(\A_k^\delta) e_0-r_{\alpha_k}(\A) e_0\|
&=\|[r_{\alpha_k}(\A_k^\delta)-r_{\alpha_k}(\A)] F'(x^\dag)^* u\| \nonumber\\
&\le c_6 \|u\|\|F'(x_k^\delta)-F'(x^\dag)\|\nonumber \\
&\le c_6 L\|u\|\|e_k^\delta\|.
\end{align}
Thus
\begin{align}\label{2.13}
\|e_{k+1}^\delta\|&\le  \|r_{\alpha_k}(\A) e_0\|+c_4 (\gamma_0
\|u\|\delta)^{1/2} +\left(c_6+ (c_3+c_4\gamma_0) c_4
r^{1/2}\right) L\|u\|\|e_k^\delta\|\nonumber\\
&\le \|r_{\alpha_k}(\A) e_0\|+c_4 (\gamma_0 \|u\|\delta)^{1/2}
+\frac{1}{4c_5} \|e_k^\delta\|
\end{align}
if we assume further that
\begin{equation}\label{s3.1}
4c_5\left(c_6+ (c_3+c_4\gamma_0) c_4 r^{1/2}\right)  L\|u\|\le 1.
\end{equation}
Note that (\ref{2.3}) and the choice of $\beta_0$ imply
$\|r_{\alpha_0}(\A) e_0\| \ge \frac{3}{4}\|e_0\|$. Thus, with the
help of (\ref{2.31}), by induction we can conclude from
(\ref{2.13}) that
$$
\|e_k^\delta\|\le \frac{4}{3} c_5 \|r_{\alpha_k}(\A) e_0\|+ C
\|u\|^{1/2}\delta^{1/2}, \quad 0\le k\le \tilde{k}_\delta.
$$
This together with (\ref{2.12}) and (\ref{s3.1}) implies
\begin{equation}\label{2.14}
\|e_k^\delta\|\le 2c_5 \|r_{\alpha_k} (\A_k^\delta) e_0\|+ C
\|u\|^{1/2} \delta^{1/2}, \quad  0\le k\le \tilde{k}_\delta.
\end{equation}
The combination of (\ref{2.11}), (\ref{2.14}) and (\ref{s3.1})
gives
\begin{equation}\label{2.11.5}
\|e_{k+1}^\delta\|\le \frac{3}{2}\|r_{\alpha_k}(\A_k^\delta)
e_0\|+ C\|u\|^{1/2} \delta^{1/2}, \quad 0\le k<\tilde{k}_\delta.
\end{equation}
We need to estimate $\|r_{\alpha_k}(\A_k^\delta) e_0\|$. By
(\ref{3.00.1}), Assumption \ref{A2.1}(a) and Assumption \ref{Lip}
we have
\begin{align*}
\|r_{\alpha_k}(\A_k^\delta) e_0\|^2 &=\left(
r_{\alpha_k}(\A_k^\delta) e_0, r_{\alpha_k}(\A_k^\delta)
F'(x^\dag)^* u\right)\\
&=\left(r_{\alpha_k}(\A_k^\delta) e_0, r_{\alpha_k}(\A_k^\delta)
\left[F'(x_k^\delta)^*
+\left(F'(x^\dag)^*-F'(x_k^\delta)^*\right)\right] u \right)\\
&\le \|F'(x_k^\delta) r_{\alpha_k}(\A_k^\delta) e_0\|\|u\|
+L\|u\|\|e_k^\delta\|\|r_{\alpha_k}(\A_k^\delta) e_0\|.
\end{align*}
Thus
$$
\|r_{\alpha_k}(\A_k^\delta) e_0\| \le \|F'(x_k^\delta)
r_{\alpha_k} (\A_k^\delta) e_0\|^{1/2} \|u\|^{1/2}
+L\|u\|\|e_k^\delta\|.
$$
With the help of (\ref{2.7}), (\ref{1.3}), Assumption
\ref{A2.1}(a) and Assumption \ref{Lip} we have
\begin{align*}
\|F'(x_k^\delta) r_{\alpha_k}(\A_k^\delta) e_0\| &\le
\|F'(x_k^\delta) e_{k+1}^\delta\|
+\|g_{\alpha_k}(\B_k^\delta)\B_k^\delta
\left(F(x_k^\delta)-y^\delta-F'(x_k^\delta) e_k^\delta\right)\|\\
& \le  \|F(x_{k+1}^\delta)-y^\delta\|+ 2\delta
+\|F(x_{k+1}^\delta)-y-F'(x_{k+1}^\delta)e_{k+1}^\delta\|\\
& \quad +\|[F'(x_{k+1}^\delta)-F'(x_k^\delta)] e_{k+1}^\delta\|
+\|F(x_k^\delta)-y-F'(x_k^\delta) e_k^\delta\|\\
&\le \|F(x_{k+1}^\delta)-y^\delta\|+ 2\delta+L\|e_k^\delta\|^2+
2L\|e_{k+1}^\delta\|^2.
\end{align*}
Therefore
\begin{align*}
\|r_{\alpha_k}(\A_k^\delta) e_0\| & \le  \|u\|^{1/2}
\|F(x_{k+1}^\delta)-y^\delta\|^{1/2}
+\sqrt{2}\|u\|^{1/2}\delta^{1/2}+\sqrt{2L\|u\|}\|e_{k+1}^\delta\|\\
&\quad + \left(L\|u\|+\sqrt{L\|u\|}\right) \|e_k^\delta\|.
\end{align*}
Combining this with (\ref{2.14}) and (\ref{2.11.5}) yields
\begin{align*}
\|r_{\alpha_k}(\A_k^\delta) e_0\| & \le
\|u\|^{1/2}\|F(x_{k+1}^\delta)-y^\delta\|^{1/2} +
C\|u\|^{1/2}\delta^{1/2}\\
&\quad +\frac{1}{2}\left[\left(3\sqrt{2}+4c_5\right)\sqrt{L\|u\|}
+4c_5L\|u\|\right] \|r_{\alpha_k}(\A_k^\delta) e_0\|.
\end{align*}
Thus, if
$$
\left(3\sqrt{2}+4 c_5\right)\sqrt{L\|u\|} +4 c_5L\|u\|\le 1,
$$
we then obtain
$$
\|r_{\alpha_k}(\A_k^\delta) e_0\| \lesssim
\|u\|^{1/2}\|F(x_{k+1}^\delta)-y^\delta\|^{1/2}
+\|u\|^{1/2}\delta^{1/2}.
$$
This together with (\ref{2.11.5}) gives
$$
\|e_k^\delta\| \lesssim
\|u\|^{1/2}\|F(x_k^\delta)-y^\delta\|^{1/2}
+\|u\|^{1/2}\delta^{1/2}
$$
for all $0<k\le \tilde{k}_\delta$. Consequently, we may set
$k=k_\delta$ in the above inequality and use the definition of
$k_\delta$ to obtain $\|e_{k_\delta}^\delta\|\lesssim
\|u\|^{1/2}\delta^{1/2}$. \hfill $\Box$
\end{proof}

\subsection{\bf Stability estimates}

In this subsection we will consider the stability of the method
(\ref{3}) by deriving some useful estimates on
$\|x_k^\delta-x_k\|$, where $\{x_k\}$ is defined by (\ref{1.8}).
It is easy to see that
\begin{equation}\label{3.9}
e_{k+1}=r_{\alpha_k}(\A_k)e_0 -g_{\alpha_k}
(\A_k)F'(x_k)^*\left(F(x_k)-y-F'(x_k)e_k\right).
\end{equation}
We will prove some important estimates on $\{x_k\}$ in Lemma
\ref{L3.1} in the next subsection. In particular, we will show
that, under the conditions in Theorem \ref{T2.1},
\begin{equation}\label{3.2.1}
x_k\in B_\rho(x^\dag) \qquad \mbox{and} \qquad \|e_k\|\le 2 c_3
r^{1/2} \alpha_k^{1/2}\|u\|
\end{equation}
for all $k\ge 0$ provided $L\|u\|$ is sufficiently small.

\begin{lemma}\label{L3.2}
Let all the conditions in Theorem \ref{T2.1} and Assumption
\ref{A2.3} hold. If $L\|u\|$ is sufficiently small, then for all
$0\le k\le \tilde{k}_\delta$ there hold
\begin{equation}\label{3.51}
\|x_k^\delta-x_k\|\le 2c_4  \frac{\delta}{\sqrt{\alpha_k}}
\end{equation}
and
\begin{equation}\label{3.52}
\|F(x_k^\delta)-F(x_k)-y^\delta+y\|\le (1+\varepsilon_2)\delta,
\end{equation}
where
\begin{align*}
\varepsilon_2 &:=2c_4 \left((c_6+ rc_4 \gamma_0)+ (4c_3+3c_4
\gamma_0)r^{1/2}+4(c_3+c_4\gamma_0) r\right) L\|u\|\\
&\quad +4 c_3c_4\left(c_6 r^{1/2}+ (c_4+c_6)c_3 r\right) L^2
\|u\|^2.
\end{align*}
\end{lemma}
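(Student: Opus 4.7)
The plan is to establish (\ref{3.51}) and (\ref{3.52}) together by induction on $k$. The base case $k=0$ is immediate: $x_0^\delta=x_0$ makes (\ref{3.51}) trivial, while $\|F(x_0^\delta)-F(x_0)-y^\delta+y\|=\|y-y^\delta\|\le\delta$ gives (\ref{3.52}). For the inductive step I assume both inequalities at index $k$ with $0\le k<\tilde{k}_\delta$ and derive them at $k+1$.

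For (\ref{3.51}), I would subtract (\ref{3.9}) from (\ref{2.7}) and write $x_{k+1}^\delta-x_{k+1}$ as a combination of differences of the regularizers plus a term $-g_{\alpha_k}(\A_k^\delta)F'(x_k^\delta)^*(u_k^\delta-u_k)$, where $u_k=F(x_k)-y-F'(x_k)(x_k-x_0)$ and similarly for $u_k^\delta$. The decomposition
\[
u_k^\delta-u_k=-(y^\delta-y)+(F'(x_k)-F'(x_k^\delta))(x_k-x_0)+[F(x_k^\delta)-F(x_k)-F'(x_k^\delta)(x_k^\delta-x_k)]
\]
isolates the leading noise contribution $g_{\alpha_k}(\A_k^\delta)F'(x_k^\delta)^*(y^\delta-y)$, of norm $\le c_4\alpha_k^{-1/2}\delta$, which supplies the $c_4\delta/\sqrt{\alpha_k}$ required by the target. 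The remaining pieces are controlled using: the commutator bounds (\ref{3.40}) and (\ref{3.43}) of Assumption \ref{A2.3} with $A=F'(x_k)$, $B=F'(x_k^\delta)$; the Lipschitz bound $\|F'(x_k)-F'(x_k^\delta)\|\le L\|x_k^\delta-x_k\|$; the a priori estimates $\|e_k^\delta\|\le 2(c_3+c_4\gamma_0)r^{1/2}\alpha_k^{1/2}\|u\|$ from (\ref{2.6}) and $\|e_k\|\le 2c_3r^{1/2}\alpha_k^{1/2}\|u\|$ from (\ref{3.2.1}); the induction hypothesis $\|x_k^\delta-x_k\|\le 2c_4\delta/\sqrt{\alpha_k}$; and crucially the relation $\delta/\alpha_k\le\gamma_0\|u\|$ for $k<\tilde{k}_\delta$, which converts otherwise dimensionally problematic factors like $L\delta^2/\alpha_k^{3/2}$ into $O(L\|u\|\delta/\sqrt{\alpha_k})$. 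Together with $\alpha_k\le r\alpha_{k+1}$, this yields a bound of the form $c_4(1+CL\|u\|)\delta/\sqrt{\alpha_k}\le 2c_4\delta/\sqrt{\alpha_{k+1}}$ as soon as $L\|u\|$ is small enough.

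For (\ref{3.52}) at index $k+1$, I would linearize $F(x_{k+1}^\delta)-F(x_{k+1})$ around $x_k$ and substitute the iteration formula into $F'(x_k)(x_{k+1}^\delta-x_{k+1})$. The identity $F'(x_k)g_{\alpha_k}(\A_k)F'(x_k)^*=I-r_{\alpha_k}(\B_k)$, together with (\ref{3.42}) of Assumption \ref{A2.3} to swap $F'(x_k^\delta)$ for $F'(x_k)$, turns the noise component of the leading term into $(I-r_{\alpha_k}(\B_k))(y^\delta-y)$; this cancels against the $-(y^\delta-y)$ inside $\phi_{k+1}=F(x_{k+1}^\delta)-F(x_{k+1})-(y^\delta-y)$, leaving $-r_{\alpha_k}(\B_k)(y^\delta-y)$, of norm at most $\delta$. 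The linearization remainders $(F'(x_{k+1})-F'(x_k))(x_{k+1}^\delta-x_{k+1})$ and $\tfrac{L}{2}\|x_{k+1}^\delta-x_{k+1}\|^2$ are controlled via the already-established (\ref{3.51}) at $k+1$ together with the bound $\delta/\alpha_{k+1}\le r\gamma_0\|u\|$.

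The hard part of the argument will be the dimensional bookkeeping: naive estimates of the error terms in (\ref{3.51}) produce pieces of order $L\|u\|^2\alpha_k^{1/2}$ or $L\|e_0\|\delta/\alpha_k$ that are not a priori comparable to the target scale $\delta/\sqrt{\alpha_k}$. Rewriting $e_0=F'(x^\dag)^*u$ via the source condition and using Assumption \ref{A2.3} to sandwich Fr\'echet-derivative differences inside the operator calculus, combined with $\delta\le\gamma_0\|u\|\alpha_k$ for $k<\tilde{k}_\delta$, is what forces every such piece to acquire a factor of $\delta$ and to become $O(L\|u\|\delta/\sqrt{\alpha_k})$, so that all error terms are absorbed into the small correction $\varepsilon_2$ stated in the lemma.
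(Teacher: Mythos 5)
Your overall architecture (decompose $x_{k+1}^\delta-x_{k+1}$, isolate the noise term $g_{\alpha_k}(\A_k^\delta)F'(x_k^\delta)^*(y^\delta-y)$ of size $c_4\delta/\sqrt{\alpha_k}$, control the rest by Assumption \ref{A2.3}, the Lipschitz bound, the a priori estimates and $\delta\le\gamma_0\|u\|\alpha_k$, then induct) is the same as the paper's, but there is a concrete gap at the crux of the inductive step for (\ref{3.51}). In your splitting the operator difference $[g_{\alpha_k}(\A_k)F'(x_k)^*-g_{\alpha_k}(\A_k^\delta)F'(x_k^\delta)^*]$ acts on the full Newton bracket $u_k=F(x_k)-y-F'(x_k)(x_k-x_0)$, and the piece $(F'(x_k)-F'(x_k^\delta))(x_k-x_0)$ sits inside $u_k^\delta-u_k$. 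Both carry the non-decaying component coming from $e_0$: one has $u_k=\bigl[F(x_k)-y-F'(x_k)e_k\bigr]+F'(x_k)e_0$ with $\|F'(x_k)e_0\|=O(\alpha_0\|u\|)$, the other has $x_k-x_0=e_k-e_0$ with $\|e_0\|=O(\alpha_0^{1/2}\|u\|)$, and neither decays with $k$. Estimating these pieces with the tools you list gives, via (\ref{3.43}), a contribution of order $c_6\alpha_k^{-1}L\|x_k^\delta-x_k\|\,\|F'(x_k)e_0\|\lesssim L\|u\|(\alpha_0/\alpha_k)\|x_k^\delta-x_k\|$, and via $\|g_{\alpha_k}(\A_k^\delta)F'(x_k^\delta)^*\|\le c_4\alpha_k^{-1/2}$ a contribution of order $L\|u\|(\alpha_0/\alpha_k)^{1/2}\|x_k^\delta-x_k\|$. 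These coefficients blow up as $\alpha_k\to0$; neither the source representation (which only gives $\|e_0\|\lesssim\alpha_0^{1/2}\|u\|$, with no $k$-decay) nor $\delta\le\gamma_0\|u\|\alpha_k$ converts them into $O(L\|u\|)\|x_k^\delta-x_k\|$ or $O(\delta/\sqrt{\alpha_k})$, so smallness of $L\|u\|$ alone cannot close the induction and these terms cannot be absorbed into $\varepsilon_2$.

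What is missing is the algebraic recombination that removes the $e_0$-carrying pieces. Since $g_{\alpha_k}(\A_k)\A_k=I-r_{\alpha_k}(\A_k)$, one has $[g_{\alpha_k}(\A_k)F'(x_k)^*-g_{\alpha_k}(\A_k^\delta)F'(x_k^\delta)^*]F'(x_k)e_0=[r_{\alpha_k}(\A_k^\delta)-r_{\alpha_k}(\A_k)]e_0-g_{\alpha_k}(\A_k^\delta)F'(x_k^\delta)^*(F'(x_k)-F'(x_k^\delta))e_0$, and the last term cancels exactly against the $e_0$-part of $-g_{\alpha_k}(\A_k^\delta)F'(x_k^\delta)^*(F'(x_k)-F'(x_k^\delta))(x_k-x_0)$. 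This is precisely the paper's decomposition: it works with the Taylor remainders $u_k=F(x_k)-y-F'(x_k)e_k$ (of size $\tfrac12L\|e_k\|^2\lesssim L\|u\|^2\alpha_k$, which cancels the $\alpha_k^{-1}$ in (\ref{3.43}) in the term $I_3$), and isolates all $e_0$-dependence in $I_1=[r_{\alpha_k}(\A_k^\delta)-r_{\alpha_k}(\A_k)]e_0$, which with $e_0=F'(x^\dag)^*u$, (\ref{3.40}) and (\ref{3.41}) is bounded by $\bigl(c_6L\|u\|+2c_3c_6r^{1/2}L^2\|u\|^2\bigr)\|x_k^\delta-x_k\|$. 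With that reorganization (and its analogue after multiplying by $F'(x_k^\delta)$ for (\ref{3.52}), where (\ref{3.42}) and the identity behind (\ref{4.28.3}) play the same role) your induction closes; as written, the separate estimates you propose for the $e_0$-carrying terms fail.
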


\begin{proof}
For each $0\le k\le \tilde{k}_\delta$ we set
\begin{equation}
u_k:=F(x_k)-y-F'(x_k) e_k, \qquad  u_k^\delta:=F(x_k^\delta)-y
-F'(x_k^\delta) e_k^\delta.
\end{equation}
It then follows from (\ref{2.7}) and (\ref{3.9}) that
\begin{equation}\label{3.53}
x_{k+1}^\delta-x_{k+1}=I_1+I_2+I_3+I_4,
\end{equation}
where
\begin{align*}
I_1&:=\left[r_{\alpha_k}(\A_k^\delta)-r_{\alpha_k}(\A_k)\right] e_0,\\
I_2&:=g_{\alpha_k}(\A_k^\delta) F'(x_k^\delta)^* (y^\delta-y),\\
I_3&:=\left[g_{\alpha_k}(\A_k) F'(x_k)^*-g_{\alpha_k}(\A_k^\delta)
F'(x_k^\delta)^*\right] u_k,\\
I_4&:=g_{\alpha_k}(\A_k^\delta) F'(x_k^\delta)^* (u_k-u_k^\delta).
\end{align*}
By using (\ref{3.00.1}), (\ref{3.41}), (\ref{3.40}), Assumption
\ref{Lip} and (\ref{3.2.1}) we have
\begin{align*}
\|I_1\| & \le \|r_{\alpha_k}(\A_k^\delta)
-r_{\alpha_k}(\A_k)\|\|F'(x^\dag)^*-F'(x_k)^*\|\|u\|\\
&\quad +\|[r_{\alpha_k}(\A_k^\delta)-r_{\alpha_k}(\A_k)] F'(x_k)^* u\|\\
& \le  c_6 L^2\|u\|\|e_k\|\|x_k^\delta-x_k\|\alpha_k^{-1/2}
+c_6 L\|u\|\|x_k^\delta-x_k\|\\
& \le c_6\left(L\|u\|+2c_3r^{1/2} L^2 \|u\|^2\right)
\|x_k^\delta-x_k\|.
\end{align*}
With the help of (\ref{A2.1.2}) and (\ref{1.3}) we have
$$
\|I_2\|\le c_4 \frac{\delta}{\sqrt{\alpha_k}}.
$$
By applying Assumption \ref{A2.1}(a), (\ref{3.43}), Assumption
\ref{Lip} and (\ref{3.2.1}) we can estimate $I_3$ as
\begin{align*}
\|I_3\|&\le \|g_{\alpha_k}(\A_k)[F'(x_k^\delta)^*-F'(x_k)^*] u_k\|
+ \|[g_{\alpha_k}(\A_k)-g_{\alpha_k}(\A_k^\delta)] F'(x_k^\delta)^* u_k\|\\
&\le (c_1+c_6) L\|u_k\|\|x_k^\delta-x_k\| \alpha_k^{-1}
\le \frac{1}{2}(c_1+c_6) L^2\|e_k\|^2 \|x_k^\delta-x_k\| \alpha_k^{-1}\\
&\le 2(c_1+c_6)c_3^2 r  L^2\|u\|^2 \|x_k^\delta-x_k\|.
\end{align*}
For the term $I_4$, we have from (\ref{A2.1.2}) that
$$
\|I_4\|\le \frac{c_4}{\sqrt{\alpha_k}}\|u_k^\delta-u_k\|.
$$
By using Assumption \ref{Lip}, (\ref{2.6}) and (\ref{3.2.1}) one
can see
\begin{align}\label{3.54}
\|u_k-u_k^\delta\|&\le
\|F(x_k^\delta)-F(x_k)-F'(x_k)(x_k^\delta-x_k)\|
+\|[F'(x_k^\delta)-F'(x_k)]e_k^\delta\|\nonumber\\
&\le \frac{1}{2}
L\|x_k^\delta-x_k\|^2+L\|e_k^\delta\|\|x_k^\delta-x_k\| \le
\frac{1}{2} L\left(3\|e_k^\delta\|+\|e_k\|\right)
\|x_k^\delta-x_k\|\nonumber\\
&\le \left(4c_3+3 c_4 \gamma_0\right) r^{1/2} \alpha_k^{1/2}
L\|u\| \|x_k^\delta-x_k\|.
\end{align}
Therefore
$$
\|I_4\|\le \left(4c_3+3c_4\gamma_0\right) c_4 r^{1/2}  L\|u\|
\|x_k^\delta-x_k\|.
$$
Thus, if $L\|u\|$ is so small that
$$
\left(c_6+(4c_3+3c_4 \gamma_0)c_4 r^{1/2}\right) L\|u\|
+2\left(c_3c_6 r^{1/2}+c_3^2 (c_1+c_6)r\right)L^2 \|u\|^2\le
\frac{1}{2},
$$
then the combination of the above estimates on $I_1$, $I_2$, $I_3$
and $I_4$ gives for $0\le k<\tilde{k}_\delta$ that
$$
\|x_{k+1}^\delta-x_{k+1}\| \le  c_4
\frac{\delta}{\sqrt{\alpha_k}}+  \frac{1}{2}\|x_k^\delta-x_k\|.
$$
This implies (\ref{3.51}) immediately.

Next we prove (\ref{3.52}). We have from (\ref{3.53}) that
\begin{equation}\label{4.27.8}
F'(x_k^\delta)(x_{k+1}^\delta-x_{k+1})-y^\delta+y
=F'(x_k^\delta)\left(I_1+I_2+I_3+I_4\right)-y^\delta+y.
\end{equation}
From (\ref{3.00.1}), (\ref{3.40}), (\ref{3.42}), Assumption
\ref{Lip}, (\ref{3.2.1}) and (\ref{3.51}) it follows that
\begin{align*}
\|F'(x_k^\delta) I_1\| & \le
\|F'(x_k^\delta)[r_{\alpha_k}(\A_k^\delta)-r_{\alpha_k}(\A_k)]
[F'(x^\dag)^*-F'(x_k)^*] u\|\\
& \quad +\|F'(x_k^\delta)[r_{\alpha_k}(\A_k^\delta)-r_{\alpha_k}(\A_k)]
F'(x_k)^*u\|\\
& \le c_6 L^2\|u\|\|e_k\|\|x_k^\delta-x_k\| + c_6
L\|u\|\alpha_k^{1/2}\|x_k^\delta-x_k\|\\
& \le \left(2c_4c_6 L\|u\|+4c_3 c_4 c_6 r^{1/2} L^2 \|u\|^2\right)
\delta.
\end{align*}
By using Assumption \ref{A2.1}(a) and (\ref{1.3}) it is easy to
see
\begin{equation}\label{4.27.9}
\|F'(x_k^\delta) I_2-y^\delta+y\|
=\|r_{\alpha_k}(\B_k^\delta)(y^\delta-y)\|\le \delta.
\end{equation}
In order to estimate $F'(x_k^\delta) I_3$, we note that
\begin{equation}\label{4.28.3}
F'(x_k^\delta) I_3=\left[F'(x_k^\delta)-F'(x_k)\right]
g_{\alpha_k}(\A_k)F'(x_k)^* u_k
+\left[r_{\alpha_k}(\B_k^\delta)-r_{\alpha_k}(\B_k)\right] u_k.
\end{equation}
Thus, it follows from (\ref{A2.1.2}), Assumption \ref{Lip},
(\ref{3.41}), (\ref{3.2.1}) and (\ref{3.51}) that
\begin{align*}
\|F'(x_k^\delta) I_3\| & \le \|\left[F'(x_k^\delta)-F'(x_k)\right]
g_{\alpha_k}(\A_k)F'(x_k)^* u_k\|\\
& \quad  +\|\left[r_{\alpha_k}(\B_k^\delta)-r_{\alpha_k}(\B_k)\right] u_k\|\\
& \le (c_4+c_6) \alpha_k^{-1/2} L\|x_k^\delta-x_k\|\|u_k\|\\
& \le \frac{1}{2} (c_4+c_6) \alpha_k^{-1/2} L^2 \|e_k\|^2 \|x_k^\delta-x_k\|\\
& \le 4 (c_4+c_6)c_3^2 c_4 r L^2 \|u\|^2 \delta.
\end{align*}
For the term $F'(x_k^\delta)I_4$ we have from Assumption
\ref{A2.1}(a), (\ref{3.54}) and (\ref{3.51}) that
\begin{align*}
\|F'(x_k^\delta) I_4\| \le  \|u_k-u_k^\delta\|\le 2 (4c_3+3c_4
\gamma_0) c_4 r^{1/2} L\|u\|\delta.
\end{align*}
Combining the above estimates, we therefore obtain
$$
\|F'(x_k^\delta)(x_{k+1}^\delta-x_{k+1})-y^\delta+y\| \le
(1+\varepsilon_3)\delta, \qquad 0\le k< \tilde{k}_\delta,
$$
where
\begin{align*}
\varepsilon_3:=& 2c_4 \left(c_6+ (4c_3+3c_4 \gamma_0) r^{1/2}
\right) L\|u\|+ 4 c_3c_4\left(c_6 r^{1/2}+ (c_4+c_6) c_3 r\right)
L^2 \|u\|^2.
\end{align*}
This together with Assumption \ref{Lip}, (\ref{2.6}),
(\ref{3.51}) and (\ref{4}) implies for $0\le k<\tilde{k}_\delta$ that
\begin{align*}
\|F'(x_{k+1}^\delta)&(x_{k+1}^\delta-x_{k+1})-y^\delta+y\|\\
&\le  \|F'(x_k^\delta)(x_{k+1}^\delta-x_{k+1})-y^\delta+y\|
+L\|x_{k+1}^\delta-x_k^\delta\|\|x_{k+1}^\delta-x_{k+1}\|\\
&\le  (1+\varepsilon_3 )\delta+ 2 c_4
L(\|e_{k+1}^\delta\|+\|e_k^\delta\|)\frac{\delta}{\sqrt{\alpha_{k+1}}}\\
& \le  (1+\varepsilon_4)\delta,
\end{align*}
where
$$
\varepsilon_4:=\varepsilon_3+8(c_3+c_4\gamma_0)c_4 r L\|u\|.
$$
Thus
$$
\|F'(x_k^\delta)(x_k^\delta-x_k)-y^\delta+y\| \le
(1+\varepsilon_4)\delta, \qquad 0\le k\le \tilde{k}_\delta.
$$
Therefore, noting that $\delta/\alpha_k \le r\gamma_0 \|u\|$ for
$0\le k\le  \tilde{k}_\delta$, we have
\begin{align*}
\|F(x_k^\delta)-F(x_k)-y^\delta+y\| & \le
\|F(x_k^\delta)-F(x_k)-F'(x_k^\delta)(x_k^\delta-x_k)\|\\
& \quad +\|F'(x_k^\delta)(x_k^\delta-x_k)-y^\delta+y\|\\
& \le \frac{1}{2} L\|x_k^\delta-x_k\|^2+(1+\varepsilon_4)\delta\\
& \le  2c_4^2 L\frac{\delta}{\alpha_k} \delta+(1+\varepsilon_4)\delta\\
& \le (1+\varepsilon_4+ 2rc_4^2 \gamma_0 L\|u\|)\delta.
\end{align*}
The proof of (\ref{3.52}) is thus complete. \hfill $\Box$
\end{proof}

\subsection{\bf Some estimates on noise-free iterations}

\begin{lemma}\label{L3.1}
Let all the conditions in Theorem \ref{T2.1} be fulfilled. If
$L\|u\|$ is sufficiently small, then for all $k\ge 0$ we have
\begin{equation}\label{3.2}
x_k\in B_\rho(x^\dag) \qquad \mbox{and}  \qquad \|e_k\|\le 2 c_3
r^{1/2} \alpha_k^{1/2}\|u\|.
\end{equation}
If,  in addition, Assumption \ref{A2.1}(b) is satisfied, then
\begin{equation}\label{3.3}
\frac{2}{3}\|r_{\alpha_k}(\A) e_0\| \le \|e_k\|\le \frac{4}{3} c_5
\|r_{\alpha_k}(\A) e_0\|
\end{equation}
and
\begin{equation}\label{4.27.1}
\frac{1}{2c_5}\|e_k\|\le \|e_{k+1}\|\le 2 \|e_k\|.
\end{equation}
\end{lemma}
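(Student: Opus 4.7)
The plan is to prove the three estimates in succession, each building on the previous one and mimicking the noise-free analog of arguments already used in the proof of Theorem \ref{T2.1}.

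First, I would establish (\ref{3.2}) by induction on $k$. The case $k=0$ is immediate from the hypothesis $\rho > 4\|e_0\|$ and (\ref{2.3}): indeed $\|e_0\| = \|F'(x^\dag)^*u\| \le c_3\alpha_0^{1/2}\|u\|$. For the inductive step, I start from the identity (\ref{3.9}), insert the source condition $e_0 = F'(x^\dag)^* u$, and split off the commutator: by Assumption \ref{A2.1}(a), (\ref{A2.1.2}), and the Lipschitz condition,
\begin{align*}
\|e_{k+1}\| &\le \|r_{\alpha_k}(\mathcal{A}_k)F'(x_k)^* u\| + \|r_{\alpha_k}(\mathcal{A}_k)[F'(x^\dag)^*-F'(x_k)^*]u\|\\
&\quad + c_4\alpha_k^{-1/2}\|F(x_k)-y-F'(x_k)e_k\|\\
&\le c_3\alpha_k^{1/2}\|u\| + L\|u\|\|e_k\| + \tfrac{1}{2}c_4 L \alpha_k^{-1/2}\|e_k\|^2.
\end{align*}
Using $\alpha_k \le r\alpha_{k+1}$ and the induction hypothesis $\|e_k\|/\sqrt{\alpha_k} \le 2c_3 r^{1/2}\|u\|$, this is bounded by $2c_3 r^{1/2}\alpha_{k+1}^{1/2}\|u\|$ provided $L\|u\|$ is small enough (the same type of smallness condition as (\ref{s1})). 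To see $x_{k+1}\in B_\rho(x^\dag)$, I bound $\|e_{k+1}\| \le \|r_{\alpha_k}(\mathcal{A}_k)e_0\| + (\text{small error}) \le \|e_0\| + \tfrac{1}{2}\rho < \rho$ using $\|r_{\alpha_k}(\mathcal{A}_k)\| \le 1$ and $\rho > 4\|e_0\|$.

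Next, assuming Assumption \ref{A2.1}(b), I prove (\ref{3.3}). For the upper bound, I replace $r_{\alpha_k}(\mathcal{A}_k)e_0$ by $r_{\alpha_k}(\mathcal{A})e_0$ via (\ref{3.40}), exactly as in the chain leading to (\ref{2.12})--(\ref{2.13}), paying a price of $c_6 L\|u\|\|e_k\|$; combined with the Lipschitz bound on the nonlinearity residual and (\ref{3.2}), this yields the recursion
\begin{equation*}
\|e_{k+1}\| \le \|r_{\alpha_k}(\mathcal{A})e_0\| + \tfrac{1}{4c_5}\|e_k\|,
\end{equation*}
valid for sufficiently small $L\|u\|$. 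The base case $\|e_0\| \le \tfrac{4}{3}c_5\|r_{\alpha_0}(\mathcal{A})e_0\|$ follows from (\ref{2.3}) and the definition of $\beta_0$, which ensure $\|r_{\alpha_0}(\mathcal{A})e_0\| \ge \tfrac{3}{4}\|e_0\|$. Feeding this into the recursion and applying (\ref{2.31}) (i.e. $\|r_{\alpha_k}(\mathcal{A})e_0\| \le c_5\|r_{\alpha_{k+1}}(\mathcal{A})e_0\|$) at each step yields the upper bound by a straightforward induction. For the lower bound I use (\ref{3.9}) the other way:
\begin{equation*}
\|e_{k+1}\| \ge \|r_{\alpha_k}(\mathcal{A}_k)e_0\| - c_3 c_4 r^{1/2}L\|u\|\|e_k\|,
\end{equation*}
swap back to $\mathcal{A}$ via (\ref{3.40}), and exploit the monotonicity $r_{\alpha_{k+1}}\le r_{\alpha_k}$ (from Assumption \ref{A2.1}(b)) together with the already-established upper bound to absorb all error terms into a prefactor that exceeds $2/3$ when $L\|u\|$ is small; the case $k=0$ follows from $\|r_{\alpha_0}(\mathcal{A})e_0\|\le\|e_0\|$.

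Finally, (\ref{4.27.1}) is essentially a corollary of (\ref{3.3}). The upper bound $\|e_{k+1}\|\le 2\|e_k\|$ follows by inserting the already-proved lower bound $\|r_{\alpha_k}(\mathcal{A})e_0\|\le\tfrac{3}{2}\|e_k\|$ into the recursion from the previous paragraph. The lower bound $\|e_{k+1}\|\ge \|e_k\|/(2c_5)$ follows by inserting the upper bound $\|r_{\alpha_k}(\mathcal{A})e_0\|\ge \tfrac{3}{4c_5}\|e_k\|$ into the one-sided lower estimate for $\|e_{k+1}\|$ and absorbing the $O(L\|u\|)$ perturbation.

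The main obstacle, as usual in this circle of arguments, is bookkeeping: one needs every smallness condition on $L\|u\|$ accumulated across the induction steps and across the successive swaps $\mathcal{A}_k \leftrightarrow \mathcal{A}$ to be independent of $k$. This is automatic here because Assumptions \ref{A2.1}--\ref{A2.3} hold uniformly in $\alpha$, the constants $c_i$ do not depend on $k$, and the a priori bound (\ref{3.2}) controls $\|e_k\|/\sqrt{\alpha_k}$ by the same constant at every step; so a single smallness hypothesis on $L\|u\|$ closes the entire induction.
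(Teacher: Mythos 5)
Your proposal is correct and follows essentially the same route as the paper's own proof: the same identity (\ref{3.9}), the same commutator swap between $\A_k$ and $\A$ via (\ref{3.40}) with error $c_6L\|u\|\|e_k\|$, the same use of (\ref{2.31}) and the monotonicity from Assumption \ref{A2.1}(b), and the same inductions with base cases from $\|r_{\alpha_0}(\A)e_0\|\ge\tfrac34\|e_0\|$. The only differences are cosmetic (your $\tfrac{1}{4c_5}$ versus the paper's $\tfrac{1}{5c_5}$, splitting $F'(x^\dag)^*u$ against $F'(x_k)^*$ in the first induction, and an explicit treatment of the $B_\rho(x^\dag)$ membership that the paper leaves implicit).
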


\begin{proof}
By using (\ref{3.00.1}), (\ref{A2.1.2}), (\ref{3.40}) and
Assumption \ref{Lip}, we have from (\ref{3.9}) that
\begin{align}\label{3.21}
 \|e_{k+1}-r_{\alpha_k}(\A) e_0\|
& \le \|[r_{\alpha_k}(\A_k)-r_{\alpha_k}(\A)] F'(x^\dag)^* u\|
+\frac{c_4}{\sqrt{\alpha_k}}\|F(x_k)-y-F'(x_k) e_k\|\nonumber\\
& \le c_6 L\|u\|\|e_k\|+\frac{c_4}{2\sqrt{\alpha_k}}L\|e_k\|^2.
\end{align}
Since (\ref{A2.1.2}) and (\ref{3.00.1}) imply $\|r_{\alpha_k}(\A)
e_0\|\le c_3 \alpha_k^{1/2}\|u\|$, we have
$$
\|e_{k+1}\|\le c_3 \alpha_k^{1/2}\|u\|+c_6 L\|u\|\|e_k\|
+\frac{c_4}{2\sqrt{\alpha_k}}L\|e_k\|^2.
$$
Note that (\ref{3.00.1}) and (\ref{2.3}) imply $\|e_0\|\le c_3
\alpha_0^{1/2} \|u\|$. By induction one can conclude the assertion
(\ref{3.2}) if $L\|u\|$ is so small that $2(c_6 r^{1/2} +c_3 c_4
r) L\|u\|\le 1$.

If we assume further that
\begin{equation}\label{4.27.2}
5c_5\left(c_6+c_3c_4 r^{1/2} \right)L\|u\|\le 1,
\end{equation}
the combination of (\ref{3.21}) and (\ref{3.2}) gives
\begin{equation}\label{3.22}
\|e_{k+1}-r_{\alpha_k}(\A) e_0\|\le \left(c_6+c_3 c_4 r^{1/2}
\right)  L\|u\|\|e_k\| \le \frac{1}{5c_5}\|e_k\|.
\end{equation}
Note that Assumption \ref{A2.1}(b) and $\alpha_k\le \alpha_{k-1}$
imply $\|r_{\alpha_k}(\A)e_0\|\le \|r_{\alpha_{k-1}}(\A)e_0\|$.
Note also that Assumption \ref{A2.1}(a) and (\ref{2.3}) imply
(\ref{3.3}) with $k=0$. Thus, from (\ref{3.22}) and (\ref{2.31})
we can conclude (\ref{3.3}) by an induction argument.
(\ref{4.27.1}) is an immediate consequence of (\ref{3.22}) and
(\ref{3.3}). \hfill $\Box$
\end{proof}

\begin{lemma}\label{L3.3}
Let all the conditions in Lemma \ref{L3.2} and Assumption
\ref{A2.1}(c) hold. If $k_\delta>0$ and $L\|u\|$ is sufficiently
small, then for all $k\ge k_\delta$ we have
\begin{equation}\label{3.4}
\|e_{k_\delta}\|\lesssim \|e_k\| +\frac{1}{\sqrt{\alpha_k}}
\left(\|F(x_{k_\delta})-y\|+\delta\right).
\end{equation}
\end{lemma}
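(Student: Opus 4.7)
The plan is to reduce the lemma to a comparison between $r_{\alpha_{k_\delta}}(\A)e_0$ and $r_{\alpha_k}(\A)e_0$ via Lemma \ref{L10.1}, using the noise-free bounds of Lemma \ref{L3.1}. By (\ref{3.3}),
\begin{equation*}
\|e_{k_\delta}\|\le \frac{4c_5}{3}\|r_{\alpha_{k_\delta}}(\A)e_0\| \le \frac{4c_5}{3}\bigl(\|r_{\alpha_k}(\A)e_0\|+\|[r_{\alpha_{k_\delta}}(\A)-r_{\alpha_k}(\A)]e_0\|\bigr),
\end{equation*}
and another use of (\ref{3.3}) gives $\|r_{\alpha_k}(\A)e_0\|\le \frac{3}{2}\|e_k\|$. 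For the commutator piece we plan to invoke Lemma \ref{L10.1} with $\beta=\alpha_{k_\delta}$, $\alpha=\alpha_k$ (which satisfies $\alpha_k\le \alpha_{k_\delta}$ since $k\ge k_\delta$), $A=F'(x^\dag)$, and $\bar{x}=e_{k_\delta}$, producing the bound $\|e_{k_\delta}-r_{\alpha_{k_\delta}}(\A)e_0\|+\frac{c_2}{\sqrt{\alpha_k}}\|F'(x^\dag)e_{k_\delta}\|$.

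The factor $\|F'(x^\dag)e_{k_\delta}\|$ will be estimated via Assumption \ref{Lip} as $\le \|F(x_{k_\delta})-y\|+\frac{1}{2}L\|e_{k_\delta}\|^2$. To handle the quadratic remainder, we would combine Theorem \ref{T2.1}(iii) with the stability estimate (\ref{3.51}) and the bound $\delta/\alpha_{k_\delta}\lesssim\|u\|$ (which follows from $k_\delta\le\tilde{k}_\delta$ together with (\ref{2.5}) and (\ref{4})) to deduce $\|e_{k_\delta}\|\lesssim \|u\|^{1/2}\delta^{1/2}$. Then $\frac{L}{2}\|e_{k_\delta}\|^2\lesssim L\|u\|\delta$, which is a constant multiple of $\delta$ for $L\|u\|$ bounded. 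Since $k\ge k_\delta$ also forces $\frac{1}{\sqrt{\alpha_{k_\delta}}}\le \frac{1}{\sqrt{\alpha_k}}$, any factor $\frac{1}{\sqrt{\alpha_{k_\delta}}}$ that arises can be harmlessly replaced by $\frac{1}{\sqrt{\alpha_k}}$.

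The remaining piece $\|e_{k_\delta}-r_{\alpha_{k_\delta}}(\A)e_0\|$ is the main technical obstacle, because the natural estimate (\ref{3.22}) from the proof of Lemma \ref{L3.1} controls $\|e_{k_\delta}-r_{\alpha_{k_\delta-1}}(\A)e_0\|$ rather than the needed quantity with index $k_\delta$. To bridge the mismatch, the plan is to split
\begin{equation*}
\|e_{k_\delta}-r_{\alpha_{k_\delta}}(\A)e_0\|\le \|e_{k_\delta}-r_{\alpha_{k_\delta-1}}(\A)e_0\|+\|[r_{\alpha_{k_\delta-1}}(\A)-r_{\alpha_{k_\delta}}(\A)]e_0\|,
\end{equation*}
estimate the first summand by $(c_6+c_3c_4r^{1/2})L\|u\|\|e_{k_\delta-1}\|\le 2c_5(c_6+c_3c_4r^{1/2})L\|u\|\|e_{k_\delta}\|$ via (\ref{3.22}) together with (\ref{4.27.1}), and apply Lemma \ref{L10.1} once more to the second summand, this time with $\beta=\alpha_{k_\delta-1}$, $\alpha=\alpha_{k_\delta}$, and again $\bar{x}=e_{k_\delta}$. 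This yields another $CL\|u\|\|e_{k_\delta}\|$ contribution plus a $\frac{c_2}{\sqrt{\alpha_{k_\delta}}}\|F'(x^\dag)e_{k_\delta}\|$ term that merges with the previous one.

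Collecting everything produces an inequality of the shape
\begin{equation*}
\|e_{k_\delta}\|\le C_1\|e_k\|+C_2L\|u\|\|e_{k_\delta}\|+\frac{C_3}{\sqrt{\alpha_k}}\bigl(\|F(x_{k_\delta})-y\|+\delta\bigr),
\end{equation*}
and for $L\|u\|$ small enough that $C_2L\|u\|\le 1/2$ the middle term is absorbed on the left, giving (\ref{3.4}). The principal difficulty lies in the index shift in (\ref{3.22}) that forces the double application of Lemma \ref{L10.1}; the smallness of $L\|u\|$ is then used twice, once to absorb the quadratic Lipschitz remainder in $\|F'(x^\dag)e_{k_\delta}\|$ via the a priori rate $\|e_{k_\delta}\|\lesssim \|u\|^{1/2}\delta^{1/2}$, and once to absorb the $\|e_{k_\delta}\|$ coefficient created by the bridging estimates.
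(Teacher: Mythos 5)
Your proposal is correct, and all the ingredients you invoke are available under the stated hypotheses; it does, however, organize the argument differently from the paper. The paper starts from the iteration formula (\ref{3.9}) and expands the difference $x_{k_\delta}-x_k$ as in (\ref{12.10}); the operator commutators $[r_{\alpha}(\A_j)-r_\alpha(\A)]e_0$ are controlled by (\ref{3.40}) and the Taylor remainders by Assumption \ref{Lip}, which reduces everything to the single term $\|[r_{\alpha_{k_\delta-1}}(\A)-r_{\alpha_{k-1}}(\A)]e_0\|$. Lemma \ref{L10.1} is then applied exactly once, with $\beta=\alpha_{k_\delta-1}$, $\alpha=\alpha_{k-1}$ and $\bar{x}=e_{k_\delta}$, so that the comparison term is precisely $\|e_{k_\delta}-r_{\alpha_{k_\delta-1}}(\A)e_0\|$ and meshes with (\ref{3.22}) without any index shift; the conclusion follows from $\|e_{k_\delta}\|\le\|x_{k_\delta}-x_k\|+\|e_k\|$, (\ref{4.27.1}) and absorption. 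You instead bypass the iterate-difference formula entirely by exploiting the two-sided equivalence (\ref{3.3}) to trade $\|e_{k_\delta}\|$ and $\|e_k\|$ for $\|r_{\alpha_{k_\delta}}(\A)e_0\|$ and $\|r_{\alpha_k}(\A)e_0\|$, which forces Lemma \ref{L10.1} to be applied with indices $k_\delta$ and $k$ and creates the index mismatch with (\ref{3.22}) that you then repair by a second application of Lemma \ref{L10.1} (indices $k_\delta-1$, $k_\delta$) together with (\ref{4.27.1}). The endgame is identical in both arguments: $\|F'(x^\dag)e_{k_\delta}\|\le\|F(x_{k_\delta})-y\|+\frac{1}{2}L\|e_{k_\delta}\|^2$, the preliminary rate $\|e_{k_\delta}\|\lesssim\|u\|^{1/2}\delta^{1/2}$ obtained from Theorem \ref{T2.1}(iii), (\ref{3.51}), (\ref{2.5}) and (\ref{4}), the replacement of $\alpha_{k_\delta}^{-1/2}$ by $\alpha_k^{-1/2}$, and absorption of the $\|e_{k_\delta}\|$ terms for $L\|u\|$ small. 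Your route buys a cleaner start (no fresh commutator or remainder estimates at this step, since that work is already encoded in Lemma \ref{L3.1}) at the price of a double use of Lemma \ref{L10.1}; the paper's decomposition avoids the index shift altogether but has to redo the (\ref{3.40})-type estimates on the way. One cosmetic remark: like the paper, you implicitly rely on Assumption \ref{A2.1}(b) through (\ref{3.3}) and (\ref{4.27.1}), which is not literally listed among the hypotheses of the lemma, so you are at exactly the paper's level of rigor on that point.
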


\begin{proof}
It follows from (\ref{3.9}) that
\begin{align}\label{12.10}
x_{k_\delta}-x_k & =[r_{\alpha_{k_\delta-1}}(\A)-r_{\alpha_{k-1}}(\A)]e_0
+[r_{\alpha_{k_\delta-1}}(\A_{k_\delta-1})-r_{\alpha_{k_\delta-1}}(\A)]
e_0\nonumber\\
&\quad -\left[r_{\alpha_{k-1}}(\A_{k-1})-r_{\alpha_{k-1}}(\A)\right] e_0\nonumber\\
&\quad -g_{\alpha_{k_\delta-1}}(\A_{k_\delta-1}) F'(x_{k_\delta-1})^*
\left[F(x_{k_\delta-1})-y-F'(x_{k_\delta-1})e_{k_\delta-1}\right]\nonumber\\
&\quad +g_{\alpha_{k-1}}(\A_{k-1})
F'(x_{k-1})^*\left[F(x_{k-1})-y-F'(x_{k-1})e_{k-1}\right].
\end{align}
Thus, by using (\ref{3.00.1}), (\ref{3.40}), Assumption \ref{Lip},
(\ref{A2.1.2}), (\ref{3.2}) and (\ref{4.27.2}), we have
\begin{align}\label{3.10}
 \|x_{k_\delta}-x_k\|
&\le  \|[ r_{\alpha_{k_\delta-1}}(\A)-r_{\alpha_{k-1}}(\A)]e_0\|
+c_6 L\|u\|\left(\|e_{k-1}\|+\|e_{k_\delta-1}\|\right)\nonumber\\
&\quad
+\frac{c_4}{2\sqrt{\alpha_{k_\delta-1}}}L\|e_{k_\delta-1}\|^2
+\frac{c_4}{2\sqrt{\alpha_{k-1}}}L\|e_{k-1}\|^2\nonumber\\
& \le \|[ r_{\alpha_{k_\delta-1}}(\A)-r_{\alpha_{k-1}}(\A)]e_0\|
+\frac{1}{5c_5} \left(\|e_{k-1}\|+\|e_{k_\delta-1}\|\right).
\end{align}
Since $k\ge k_\delta$, we have $\alpha_{k-1}\le
\alpha_{k_\delta-1}$. Since Assumption \ref{A2.1}(b) and (c) hold,
we may apply Lemma \ref{L10.1} with $x=e_0$,
$\bar{x}=e_{k_\delta}$, $\alpha=\alpha_{k-1}$,
$\beta=\alpha_{k_\delta-1}$ and $A=F'(x^\dag)$ to obtain
$$
\|[ r_{\alpha_{k_\delta-1}}(\A)-r_{\alpha_{k-1}}(\A)]e_0\| \le
\|r_{\alpha_{k_\delta-1}}(\A) e_0-e_{k_\delta}\|
+\frac{c_2}{\sqrt{\alpha_{k-1}}} \|F'(x^\dag) e_{k_\delta}\|.
$$
Note that (\ref{3.22}) implies
$$
\|e_{k_\delta}-r_{\alpha_{k_\delta-1}}(\A)e_0\| \le
\frac{1}{5c_5}\|e_{k_\delta-1}\|.
$$
Note also that Assumption \ref{Lip} implies
$$
\|F'(x^\dag) e_{k_\delta}\|\le \|F(x_{k_\delta})-y\|+\frac{1}{2}
L\|e_{k_\delta}\|^2.
$$
Thus
$$
\|[ r_{\alpha_{k_\delta-1}}(\A)-r_{\alpha_{k-1}}(\A)]e_0\| \le
\frac{1}{5c_5} \|e_{k_\delta-1}\| +\frac{C}{\sqrt{\alpha_k}}
\left(\|F(x_{k_\delta})-y\|+L\|e_{k_\delta}\|^2\right).
$$
Since Lemma \ref{L3.2}, Theorem \ref{T2.1} and the fact
$k_\delta\le \tilde{k}_\delta$ imply
$$
\|e_{k_\delta}\| \lesssim \|e_{k_\delta}^\delta\|
+\frac{\delta}{\sqrt{\alpha_{k_\delta}}}\lesssim
\|u\|^{1/2}\delta^{1/2},
$$
we have
$$
\|[ r_{\alpha_{k_\delta-1}}(\A)-r_{\alpha_{k-1}}(\A)]e_0\| \le
\frac{1}{5c_5} \|e_{k_\delta-1}\|+\frac{C}{\sqrt{\alpha_{k}}}
\left(\|F(x_{k_\delta})-y\|+L\|u\|\delta\right).
$$
Combining this with (\ref{3.10}) and using Lemma \ref{L3.1} gives
$$
\|x_{k_\delta}-x_k\|\le \frac{4}{5} \|e_{k_\delta}\|+ C\|e_k\|
+\frac{C}{\sqrt{\alpha_k}}\left(\|F(x_{k_\delta})-y\|+\delta\right).
$$
This completes the proof. \hfill $\Box$
\end{proof}

\subsection{\bf Completion of proof of Theorem \ref{T4.1}}

\begin{lemma}\label{L4.1}
Assume that all the conditions in Lemma \ref{L3.1} are satisfied.
Then
\begin{equation}\label{4.2}
\|F'(x^\dag) e_k\|\lesssim \|r_{\alpha_k}(\A) \A^{1/2} e_0\|+
\alpha_k^{1/2}\|r_{\alpha_k}(\A) e_0\|
\end{equation}
for all $k\ge 0$.
\end{lemma}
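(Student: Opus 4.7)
The plan is to apply $F'(x^\dag)$ to the noise-free identity \eqref{3.9} after shifting $k\mapsto k-1$, and then to control each of the three arising terms by a piece of the right-hand side of the assertion. The case $k=0$ is immediate: \eqref{2.3} together with the defining property of $\beta_0$ forces $\sigma(\A)\subset[0,\beta_0]$ and $r_{\alpha_0}\ge 3/4$ there, so $\|F'(x^\dag)e_0\|=\|\A^{1/2}e_0\|\le\tfrac{4}{3}\|r_{\alpha_0}(\A)\A^{1/2}e_0\|$. For $k\ge 1$, invoking the source relation $e_0=F'(x^\dag)^* u$ from \eqref{3.00.1}, the shifted identity reads
\begin{align*}
F'(x^\dag)e_k &= F'(x^\dag)r_{\alpha_{k-1}}(\A)e_0 + F'(x^\dag)\bigl[r_{\alpha_{k-1}}(\A_{k-1})-r_{\alpha_{k-1}}(\A)\bigr]F'(x^\dag)^* u \\
&\quad - F'(x^\dag)g_{\alpha_{k-1}}(\A_{k-1})F'(x_{k-1})^* u_{k-1},
\end{align*}
where $u_{k-1}:=F(x_{k-1})-y-F'(x_{k-1})e_{k-1}$. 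I would bound the three summands in turn.

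Term I equals $\|r_{\alpha_{k-1}}(\A)\A^{1/2}e_0\|$ by the functional calculus, and \eqref{2.31} converts it into $c_5\|r_{\alpha_k}(\A)\A^{1/2}e_0\|$. For Term II I split $F'(x^\dag)=F'(x_{k-1})+[F'(x^\dag)-F'(x_{k-1})]$ \emph{on the left}. The main piece $F'(x_{k-1})[\cdots]F'(x^\dag)^* u$ is controlled by \eqref{3.42} applied with $A=F'(x_{k-1})$, $B=F'(x^\dag)$ together with Assumption \ref{Lip}, yielding $c_6\alpha_{k-1}^{1/2}L\|e_{k-1}\|\|u\|$; the remainder is bounded, using Assumption \ref{Lip} on the new outer factor and \eqref{3.40}, by $c_6L^2\|e_{k-1}\|^2\|u\|$. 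The a priori bound \eqref{3.2} and the equivalence \eqref{3.3} collapse both pieces to $C(L\|u\|)\alpha_{k-1}^{1/2}\|r_{\alpha_{k-1}}(\A)e_0\|$. Term III is handled by the same left-decomposition: using the intertwining identity $F'(x_{k-1})g_{\alpha_{k-1}}(\A_{k-1})F'(x_{k-1})^*=I-r_{\alpha_{k-1}}(\B_{k-1})$ (an operator of norm at most $1$), the main piece is bounded via $\|u_{k-1}\|\le\tfrac{1}{2}L\|e_{k-1}\|^2$, while the remainder is bounded, using \eqref{A2.1.2} and Assumption \ref{Lip}, by $\tfrac{c_4}{2}L^2\alpha_{k-1}^{-1/2}\|e_{k-1}\|^3$. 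Both again shrink, via \eqref{3.2} and \eqref{3.3}, to $C(L\|u\|)^j\alpha_{k-1}^{1/2}\|r_{\alpha_{k-1}}(\A)e_0\|$ with $j\in\{1,2\}$.

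Summing the three contributions and passing from $\alpha_{k-1}^{1/2}$ to $\alpha_k^{1/2}$ via \eqref{4} and from $\|r_{\alpha_{k-1}}(\A)e_0\|$ to $\|r_{\alpha_k}(\A)e_0\|$ via \eqref{2.31} then yields the claimed inequality, with the smallness of $L\|u\|$ absorbing all auxiliary constants. The principal obstacle is the accounting in Term II: a naive decomposition would leave $F'(x^\dag)$ on \emph{both} sides of the commutator, to which \eqref{3.42} does not apply directly. The trick of expanding $F'(x^\dag)$ on the left rather than on the right aligns the main sandwich with the template of \eqref{3.42}, while the remainder then carries two Lipschitz factors which, together with the a priori bound \eqref{3.2}, suffice to absorb it into the target $\alpha_k^{1/2}\|r_{\alpha_k}(\A)e_0\|$.
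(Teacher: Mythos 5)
Your proposal is correct and follows essentially the same route as the paper: apply $F'(x^\dag)$ to the noise-free iteration identity, split $F'(x^\dag)$ on the left as $F'(x_{k-1})+[F'(x^\dag)-F'(x_{k-1})]$ so that the commutator term is handled by \eqref{3.42} (main piece) and Assumption \ref{Lip} plus \eqref{3.40} (remainder), bound the residual term via $g_{\alpha}(\B)\B=I-r_\alpha(\B)$ and \eqref{A2.1.2}, and then absorb everything into the right-hand side using \eqref{3.2}, \eqref{3.3}, \eqref{2.31} and \eqref{4}. Your explicit treatment of the $k=0$ case via the choice of $\beta_0$ is a small detail the paper leaves implicit, but otherwise the two arguments coincide.
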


\begin{proof}
We first use (\ref{3.9}) to write
\begin{align}\label{4.27.3}
F'(x^\dag) e_{k+1}&=F'(x^\dag) r_{\alpha_k}(\A) e_0 +F'(x^\dag)
\left[r_{\alpha_k}(\A_k)-r_{\alpha_k}(\A)\right] e_0\nonumber\\
&\quad -F'(x^\dag) g_{\alpha_k}(\A_k) F'(x_k)^*\left[F(x_k)-y-F'(x_k)
e_k\right].
\end{align}
Thus, it follows from (\ref{3.00.1}), Assumption \ref{Lip},
Assumption \ref{A2.1}(a), (\ref{3.40}), (\ref{3.42}), (\ref{3.2})
and (\ref{3.3}) that
\begin{align*}
\|F'(x^\dag) e_{k+1}\| &\lesssim \|F'(x^\dag) r_{\alpha_k}(\A)
e_0\|
+L\|e_k\|\|[r_{\alpha_k}(\A_k)-r_{\alpha_k}(\A)]F'(x^\dag)^* u\|\\
&\quad +\|F'(x_k)[r_{\alpha_k}(\A_k)-r_{\alpha_k}(\A)]F'(x^\dag)^* u\|
+(1+L\|e_k\|\alpha_k^{-1/2}) L\|e_k\|^2\\
&\lesssim \|r_{\alpha_k}(\A)\A^{1/2}
e_0\|+L^2\|u\|\|e_k\|^2+\alpha_k^{1/2}L\|u\|\|e_k\|+L\|e_k\|^2\\
&\lesssim  \|r_{\alpha_k}(\A)\A^{1/2} e_0\|+
\alpha_k^{1/2}\|r_{\alpha_k}(\A) e_0\|.
\end{align*}
This together with (\ref{2.31}) and (\ref{4}) implies (\ref{4.2}). \hfill $\Box$
\end{proof}

\begin{lemma}\label{L4.2}
Under the conditions in Lemma \ref{L3.2} and Lemma \ref{L3.1}, if
$\varepsilon_2\le (\tau-1)/2$ then for the $k_\delta$ determined
by (\ref{2.4}) with $\tau>1$ we have
\begin{equation}\label{4.3}
(\tau-1) \delta \lesssim \|r_{\alpha_k}(\A) \A^{1/2} e_0\|+
\alpha_k^{1/2}\|r_{\alpha_k}(\A) e_0\|
\end{equation}
for all $0\le k<k_\delta$,
\end{lemma}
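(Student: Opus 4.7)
The plan is to combine the discrepancy condition at index $k<k_\delta$ with the stability estimate (\ref{3.52}) from Lemma \ref{L3.2} to transfer the lower bound on $\|F(x_k^\delta)-y^\delta\|$ to a lower bound on $\|F(x_k)-y\|$ for the noise-free iterates. Then I would bound $\|F(x_k)-y\|$ from above by $\|F'(x^\dag)e_k\|$ plus nonlinearity remainders, and finally apply Lemma \ref{L4.1} together with the noise-free bounds (\ref{3.2}) and (\ref{3.3}) from Lemma \ref{L3.1} to express everything in terms of $\|r_{\alpha_k}(\A)\A^{1/2}e_0\|$ and $\alpha_k^{1/2}\|r_{\alpha_k}(\A)e_0\|$.

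More concretely, since $k<k_\delta$ means $\|F(x_k^\delta)-y^\delta\|>\tau\delta$, the triangle inequality together with (\ref{3.52}) yields
$$
\tau\delta<\|F(x_k^\delta)-y^\delta\|\le \|F(x_k)-y\|+(1+\varepsilon_2)\delta,
$$
so the hypothesis $\varepsilon_2\le(\tau-1)/2$ gives $(\tau-1)\delta\le 2\|F(x_k)-y\|$. Next I would split
$$
\|F(x_k)-y\|\le \|F(x_k)-y-F'(x_k)e_k\|+\|[F'(x_k)-F'(x^\dag)]e_k\|+\|F'(x^\dag)e_k\|,
$$
bound the first two terms by $\tfrac{3}{2}L\|e_k\|^2$ via Assumption \ref{Lip}, and use (\ref{3.2}) to write $L\|e_k\|^2\le 2c_3 r^{1/2}L\|u\|\,\alpha_k^{1/2}\|e_k\|$. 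Then (\ref{3.3}) converts $\|e_k\|$ into $\|r_{\alpha_k}(\A)e_0\|$ up to a constant, so $L\|e_k\|^2\lesssim L\|u\|\,\alpha_k^{1/2}\|r_{\alpha_k}(\A)e_0\|\lesssim \alpha_k^{1/2}\|r_{\alpha_k}(\A)e_0\|$ since $L\|u\|$ is already assumed sufficiently small.

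For the linear piece $\|F'(x^\dag)e_k\|$, Lemma \ref{L4.1} supplies exactly
$$
\|F'(x^\dag)e_k\|\lesssim \|r_{\alpha_k}(\A)\A^{1/2}e_0\|+\alpha_k^{1/2}\|r_{\alpha_k}(\A)e_0\|,
$$
so adding the two contributions and chaining back gives
$$
(\tau-1)\delta\le 2\|F(x_k)-y\|\lesssim \|r_{\alpha_k}(\A)\A^{1/2}e_0\|+\alpha_k^{1/2}\|r_{\alpha_k}(\A)e_0\|,
$$
which is (\ref{4.3}). No step here looks genuinely hard: the only delicate point is making sure that the bookkeeping of the nonlinearity remainder $L\|e_k\|^2$ really is absorbed into the second term on the right, which works because the preliminary rate (\ref{3.2}) provides the crucial factor $\alpha_k^{1/2}$ and because $L\|u\|$ has already been fixed small enough in Lemma \ref{L3.1} and Lemma \ref{L3.2}.
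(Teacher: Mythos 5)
Your proposal is correct and follows essentially the same route as the paper: combine the discrepancy inequality with the stability estimate (\ref{3.52}), linearize $\|F(x_k)-y\|$ around $x^\dag$ via Assumption \ref{Lip}, absorb the quadratic remainder using (\ref{3.2})--(\ref{3.3}), and invoke Lemma \ref{L4.1} for $\|F'(x^\dag)e_k\|$. The only cosmetic difference is that you expand the remainder around $x_k$ and then switch derivatives (giving $\tfrac{3}{2}L\|e_k\|^2$ instead of the paper's direct $\tfrac{1}{2}L\|e_k\|^2$), which is immaterial.
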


\begin{proof}
By using (\ref{3.52}), Lemma \ref{L3.1} and Lemma \ref{L4.1}, we
have for $0\le k<k_\delta$ that
\begin{align*}
\tau \delta &\le \|F(x_k^\delta)-y^\delta\|
\le \|F(x_k^\delta)-F(x_k)-y^\delta+y\|+\|F(x_k)-y\|\\
& \le (1+\varepsilon_2)\delta+\|F'(x^\dag)e_k\|+\frac{1}{2}L\|e_k\|^2\\
& \le (1+\varepsilon_2)\delta +C \|r_{\alpha_k}(\A) \A^{1/2} e_0\|+
C\alpha_k^{1/2}\|r_{\alpha_k}(\A) e_0\|.
\end{align*}
Since $\tau>1$, by the smallness condition $\varepsilon_2\le
(\tau-1)/2$ on $L\|u\|$ we obtain (\ref{4.3}). \hfill $\Box$
\end{proof}

\begin{proof} {\it of Theorem \ref{T4.1}}.
If $k_\delta=0$, then the definition of $k_\delta$ implies
$\|F(x_0)-y^\delta\|\le \tau \delta$. From Theorem \ref{T2.1} we
know that $\|e_0\|\lesssim \|u\|^{1/2}\delta^{1/2}$. Thus
\begin{align*}
\|F'(x^\dag) e_0\| &\le \|F(x_0)-y-F'(x^\dag)
e_0\|+\|F(x_0)-y^\delta\|+\delta\\
&\le \frac{1}{2}L\|e_0\|^2+(1+\tau)\delta\lesssim \delta.
\end{align*}
Since $e_0=\A^{\nu}\omega$ for some $1/2\le \nu\le \bar{\nu}-1/2$,
we may use the interpolation inequality to obtain
\begin{align*}
\|e_{k_\delta}^\delta\|&=\|e_0\|=\|\A^\nu \omega\|\le
\|\omega\|^{1/(1+2\nu)} \|\A^{1/2+\nu}\omega\|^{2\nu/(1+2\nu)}\\
&=\|\omega\|^{1/(1+2\nu)} \|F'(x^\dag) e_0\|^{2\nu/(1+2\nu)}\\
&\lesssim \|\omega\|^{1/(1+2\nu)} \delta^{2\nu/(1+2\nu)},
\end{align*}
which gives the desired estimate.

Therefore, we may assume that $k_\delta>0$ in the remaining
argument. By using $e_0=\A^\nu \omega$ for some $1/2 \le\nu\le
\bar{\nu}- 1/2$ and Lemma \ref{L4.2} it follows that there exists
a positive constant $C_\nu$ such that
$$
(\tau-1)\delta< C_\nu \alpha_k^{\nu+1/2}\|\omega\|, \qquad 0\le
k<k_\delta.
$$
Now we define the integer $\bar{k}_\delta$ by
$$
\alpha_{\bar{k}_\delta} \le
\left(\frac{(\tau-1)\delta}{C_\nu\|\omega\|}\right)^{2/(1+2\nu)}<\alpha_k,
\qquad 0\le k<\bar{k}_\delta.
$$
Then $k_\delta\le \bar{k}_\delta$. Thus, by using Lemma \ref{L3.2}
and Lemma \ref{L3.3}, we have
$$
\|e_{k_\delta}^\delta\| \lesssim
\|e_{k_\delta}\|+\frac{\delta}{\sqrt{\alpha_{k_\delta}}} \lesssim
\|e_{\bar{k}_\delta}\| +\frac{\|F(x_{k_\delta})-y\|
+\delta}{\sqrt{\alpha_{\bar{k}_\delta}}}
+\frac{\delta}{\sqrt{\alpha_{k_\delta}}}.
$$
Note that Lemma \ref{L3.2} and the definition of $k_\delta$ imply
$$
\|F(x_{k_\delta})-y\|\le \|F(x_{k_\delta}^\delta)-y^\delta\|
+\|F(x_{k_\delta}^\delta)-F(x_{k_\delta})-y^\delta+y\|\lesssim
\delta.
$$
This together with (\ref{3.3}), $k_\delta\le \bar{k}_\delta$ and
$\|r_{\alpha_k}(\A) e_0\| \lesssim \alpha_k^\nu\|\omega\|$ then
gives
\begin{equation}\label{6.18.1}
\|e_{k_\delta}^\delta\|\lesssim
\alpha_{\bar{k}_\delta}^\nu\|\omega\|
+\frac{\delta}{\sqrt{\alpha_{k_\delta}}}
+\frac{\delta}{\sqrt{\alpha_{\bar{k}_\delta}}} \lesssim
\alpha_{\bar{k}_\delta}^\nu\|\omega\| +
\frac{\delta}{\sqrt{\alpha_{\bar{k}_\delta}}}.
\end{equation}
Using the definition of $\bar{k}_\delta$ and (\ref{4}), we
therefore complete the proof. \hfill $\Box$
\end{proof}

\section{\bf Proof of Theorem \ref{T4.5}}
\setcounter{equation}{0}

In this section we will give the proof of Theorem \ref{T4.5}. The
essential idea is similar as in the proof of Theorem \ref{T4.1}.
Thus we need to establish similar results as those used in Section
3. However, since we do not have source representation
$e_0=F'(x^\dag)^*u$ any longer and since $F$ satisfies different
conditions, we must modify the arguments carefully. We will
indicate the essential steps without spelling out all the
necessary smallness conditions on $(K_0+K_1+K_2)\|e_0\|$. We first
introduce the integer $n_\delta$ by
\begin{equation}\label{11.1}
\alpha_{n_\delta}\le \left(\frac{\delta}{\gamma_1\|e_0\|}\right)^2
<\alpha_k, \qquad 0\le k<n_\delta.
\end{equation}
Recall that $\gamma_1$ is a constant satisfying $\gamma_1>c_3
r^{1/2}/(\tau-1)$.

\begin{proof}{\it of Theorem \ref{T4.5}}.  In order to complete the
proof of Theorem \ref{T4.5}, we need to establish various estimates.
We will divide the arguments into several steps.

{\it Step 1}. We will show that for all $0\le k\le n_\delta$
\begin{equation}\label{11.2}
x_k^\delta\in B_\rho(x^\dag), \quad \|e_k^\delta\|\lesssim
\|e_0\|,
\end{equation}
\begin{equation}\label{11.2.5}
\|F'(x^\dag) e_k^\delta\|\lesssim \alpha_k^{1/2} \|e_0\|
\end{equation}
and that $k_\delta\le n_\delta$ for the integer $k_\delta$ defined
by the discrepancy principle (\ref{2.4}) with $\tau>1$.

To see this, we note that, for any $0\le k<n_\delta$ with
$x_k^\delta\in B_\rho(x^\dag)$, (\ref{2.7}) and Assumption
\ref{F1} imply
\begin{align*}
e_{k+1}^\delta&=r_{\alpha_k}(\A_k^\delta) e_0 -\int_0^1
g_{\alpha_k}(\A_k^\delta)\A_k^\delta
\left(R(x_k^\delta-te_k^\delta, x_k^\delta)-I\right)
e_k^\delta dt\\
&\quad +g_{\alpha_k}(\A_k^\delta) F'(x_k^\delta)^*(y^\delta-y).
\end{align*}
Therefore, with the help of Assumption \ref{A2.1}(a) and
(\ref{A2.1.2}), we have
\begin{align*}
\|e_{k+1}^\delta\|\le \|e_0\| +\frac{1}{2}K_0\|e_k^\delta\|^2 +c_4
\delta \alpha_k^{-1/2} \le (1+c_4 \gamma_1)
\|e_0\|+\frac{1}{2}K_0\|e_k^\delta\|^2.
\end{align*}
Thus, if $2(1+c_4 \gamma_1)K_0\|e_0\|\le 1$,  then, by using
$\rho>2(1+c_4 \gamma_1)\|e_0\|$ and an induction argument, we can
conclude $\|e_k^\delta\| \le 2(1+c_4 \gamma_1) \|e_0\|<\rho$ for
all $0\le k\le n_\delta$. This establishes (\ref{11.2}).

Next we show (\ref{11.2.5}). It follows from (\ref{2.7}),
Assumption \ref{A2.1}(a), (\ref{1.3}), (\ref{F2.2}) and
(\ref{11.1}) that for $0\le k<n_\delta$
\begin{align*}
\|F'(x_k^\delta) e_{k+1}^\delta\|&\lesssim \alpha_k^{1/2}
\|e_0\|+\delta
+\|F(x_k^\delta)-y-F'(x_k^\delta)e_k^\delta\|\\
&\lesssim  \alpha_k^{1/2}\|e_0\|+ (K_1+K_2)\|e_k^\delta\|
\|F'(x^\dag) e_k^\delta\|.
\end{align*}
By Assumption \ref{F2} we have
\begin{align*}
\|[F'(x^\dag)-F'(x_k^\delta)]e_{k+1}^\delta\| &\le
K_1\|e_k^\delta\|\|F'(x^\dag)e_{k+1}^\delta\|
+K_2\|e_{k+1}^\delta\|\|F'(x^\dag)e_k^\delta\|.
\end{align*}
The above two inequalities and (\ref{11.2}) then imply
$$
\|F'(x^\dag) e_{k+1}^\delta\|\lesssim
\alpha_k^{1/2}\|e_0\|+K_1\|e_0\|\|F'(x^\dag) e_{k+1}^\delta\|
+(K_1+K_2)\|e_0\| \|F'(x^\dag) e_k^\delta\|.
$$
Thus, if $(K_1+K_2)\|e_0\|$ is sufficiently small, we can conclude
(\ref{11.2.5}) by an induction argument. As direct consequences of
(\ref{11.2}), (\ref{11.2.5}) and Assumption \ref{F2} we have
\begin{equation}\label{4.28.1}
\|F'(x_k^\delta) e_k^\delta\|\lesssim \alpha_k^{1/2} \|e_0\|,
\quad 0\le k\le n_\delta
\end{equation}
and
\begin{equation}\label{4.28.1.1}
\|F'(x_{k+1}^\delta)(x_{k+1}^\delta-x_k^\delta)\|\lesssim
\alpha_k^{1/2} \|e_0\|, \quad 0\le k<n_\delta.
\end{equation}

In order to show $k_\delta\le n_\delta$, we note that (\ref{2.7})
gives
\begin{align*}
F'(x^\dag)e_{k+1}^\delta&-y^\delta+y = F'(x_k^\delta)
r_{\alpha_k}(\A_k^\delta) e_0 +\left(F'(x^\dag)-F'(x_k^\delta)\right)
r_{\alpha_k}(\A_k^\delta) e_0\\
&-\left(F'(x^\dag)-F'(x_k^\delta)\right)
g_{\alpha_k}(\A_k^\delta)F'(x_k^\delta)^*
\left(F(x_k^\delta)-y^\delta
-F'(x_k^\delta)e_k^\delta\right)\\
&-g_{\alpha_k}(\B_k^\delta)\B_k^\delta
\left(F(x_k^\delta)-y-F'(x_k^\delta) e_k^\delta\right)
-r_{\alpha_k}(\B_k^\delta)(y^\delta-y).
\end{align*}
Thus, by using (\ref{1.3}), Assumption \ref{A2.1}(a),
(\ref{A2.1.2}), Assumption \ref{F2}, (\ref{F2.1}), (\ref{11.2}),
(\ref{4.28.1}) and (\ref{4}) we have for $0\le k<n_\delta$
\begin{align*}
\|F'(x^\dag) e_{k+1}^\delta-y^\delta+y\|  &\le
\delta+c_3\alpha_k^{1/2}\|e_0\|+ c_3
K_1\|e_0\|\|e_k^\delta\|\alpha_k^{1/2}
+K_2\|e_0\|\|F'(x_k^\delta)e_k^\delta\|\\
&\quad + K_1\|e_k^\delta\|\left(\delta+\frac{1}{2} (K_1+K_2)
\|e_k^\delta\|\|F'(x_k^\delta) e_k^\delta\|\right)\\
&\quad + c_4 K_2 \alpha_k^{-1/2} \|F'(x_k^\delta) e_k^\delta\|
\left(\delta+\frac{1}{2}(K_1+K_2) \|e_k^\delta\|\|F'(x_k^\delta)
e_k^\delta\|\right)\\
&\quad +\frac{1}{2} (K_1+K_2)\|e_k^\delta\|\|F'(x_k^\delta) e_k^\delta\|\\
& \le \delta+\left(c_3+C(K_1+K_2) \|e_0\|\right) \alpha_k^{1/2}\|e_0\|\\
& \le \delta +r^{1/2} \left( c_3+C(K_1+K_2)\|e_0\|\right)
\alpha_{k+1}^{1/2} \|e_0\|.
\end{align*}
Recall that $\gamma_1>c_3 r^{1/2}/(\tau-1)$. Thus, with the help
of (\ref{11.2}), (\ref{11.2.5}) and the definition of $n_\delta$,
one can see that, if $(K_1+K_2)\|e_0\|$ is sufficiently small,
then
\begin{align*}
\|F(x_{n_\delta}^\delta)-y^\delta\| &\le
\|F(x_{n_\delta}^\delta)-y-F'(x^\dag) e_{n_\delta}^\delta\|
+\|F'(x^\dag) e_{n_\delta}^\delta -y^\delta+y\|\\
&\le \delta+ r^{1/2}\left(c_3+C(K_1+K_2)\|e_0\|\right)
\alpha_{n_\delta}^{1/2}\|e_0\|\\
&\quad \,\,
+\frac{1}{2}(K_1+K_2)\|e_{n_\delta}^\delta\|\|F'(x^\dag)
e_{n_\delta}^\delta\|\\
&\le \delta+
r^{1/2}\left(c_3+C(K_1+K_2)\|e_0\|\right)\alpha_{n_\delta}^{1/2}
\|e_0\|\\
&\le \delta+ r^{1/2}\left(c_3+C(K_1+K_2)\|e_0\|\right)
\gamma_1^{-1}\delta\\
&\le \tau\delta.
\end{align*}
This implies $k_\delta\le n_\delta$.

{\it Step 2}. We will show, for the noise-free iterated solutions
$\{x_k\}$, that for all $k\ge 0$
\begin{equation}\label{12.4}
\|r_{\alpha_k}(\A)e_0\|\lesssim  \|e_k\|\lesssim
\|r_{\alpha_k}(\A)e_0\|,
\end{equation}
\begin{equation}\label{12.5}
\|e_k\|\lesssim  \|e_{k+1}\|\lesssim \|e_k\|
\end{equation}
and for all $0\le k\le l$
\begin{equation}\label{12.6}
\|e_k\|\lesssim \|e_l\|+\frac{1}{\sqrt{\alpha_l}} \|F(x_k)-y\|.
\end{equation}

In fact, from (\ref{3.9}) and Assumption \ref{F1} it is easy to
see that
\begin{equation}\label{12.1}
\|e_{k+1}-r_{\alpha_k}(\A_k) e_0\|\le \frac{1}{2}K_0\|e_k\|^2.
\end{equation}
If $2 K_0\|e_0\|\le 1$, then by induction we can see that
$\{x_k\}$ is well-defined and
\begin{equation}\label{12.2}
\|e_k\|\le 2\|e_0\| \qquad \mbox{for all } k\ge 0.
\end{equation}
This together with  (\ref{12.1}) and (\ref{6.2.1}) gives
\begin{align}\label{12.3}
\|e_{k+1}-r_{\alpha_k}(\A) e_0\| \lesssim
\|[r_{\alpha_k}(\A_k)-r_{\alpha_k}(\A)]e_0\| +K_0\|e_k\|^2
\lesssim K_0\|e_0\|\|e_k\|.
\end{align}
Thus, by Assumption \ref{A2.2} and the smallness of $K_0\|e_0\|$
we obtain (\ref{12.4}) by induction. (\ref{12.5}) is an immediate
consequence of (\ref{12.3}) and (\ref{12.4}).

In order to show (\ref{12.6}), we first consider the case $k>0$.
Note that $x_k-x_l$ has a similar expression as in (\ref{12.10}),
so we may use (\ref{6.2.1}), Assumption \ref{F1} and (\ref{12.2})
to obtain
\begin{align}\label{12.11}
\|x_k-x_l\|& \lesssim
\|r_{\alpha_{k-1}}(\A)e_0-r_{\alpha_{l-1}}(\A)e_0\|
+K_0\|e_0\|\left(\|e_{k-1}\|+\|e_{l-1}\|\right)\nonumber\\
&\quad + K_0\|e_{k-1}\|^2+ K_0\|e_{l-1}\|^2\nonumber\\
& \lesssim \|[r_{\alpha_{k-1}}(\A)-r_{\alpha_{l-1}}(\A)]e_0\|
+K_0\|e_0\|\left(\|e_{k-1}\|+\|e_{l-1}\|\right).
\end{align}
By Lemma \ref{L10.1} with $x=e_0$, $\bar{x}=e_k$,
$\alpha=\alpha_{l-1}$, $\beta=\alpha_{k-1}$ and $A=F'(x^\dag)$, we
have
\begin{align*}
\|[r_{\alpha_{k-1}}(\A)-r_{\alpha_{l-1}}(\A)]e_0\| & \lesssim
\|r_{\alpha_{k-1}}(\A)e_0-e_k\|
+\frac{1}{\sqrt{\alpha_{l-1}}}\|F'(x^\dag)e_k\|.
\end{align*}
With the help of (\ref{F2.1}), (\ref{12.2}), and the smallness of
$(K_1+K_2)\|e_0\|$, we have
\begin{equation}\label{4.28.5}
\|F'(x^\dag)e_k\| \le \|F(x_k)-y\|+\frac{1}{2}\|F'(x^\dag)e_k\|.
\end{equation}
Therefore $\|F'(x^\dag)e_k\|\le 2\|F(x_k)-y\|$. This together with
(\ref{12.3}) and (\ref{12.5}) then implies
$$
\|[r_{\alpha_{k-1}}(\A)-r_{\alpha_{l-1}}(\A)]e_0\|\lesssim
K_0\|e_0\| \|e_k\| +\frac{1}{\sqrt{\alpha_l}}\|F(x_k)-y\|.
$$
Combining this with (\ref{12.11}) gives
\begin{align*}
\|x_k-x_l\| &\lesssim  K_0\|e_0\|\|e_k\|
+\|e_l\|+\frac{1}{\sqrt{\alpha_l}}\|F(x_k)-y\|
\end{align*}
which implies (\ref{12.6}) if $K_0\|e_0\|$ is sufficiently small.

For the case $k=0$, we can assume $l\ge 1$. Since (\ref{12.6}) is
valid for $k=1$, we may use (\ref{12.5}) to conclude that
(\ref{12.6}) is also true for $k=0$.

{\it Step 3}. We will show for all $k\ge 0$ that
\begin{equation}\label{5.5.0}
\|F'(x^\dag) e_k\| \lesssim \|r_{\alpha_k}(\A) \A^{1/2} e_0\|
+\alpha_k^{1/2}\|r_{\alpha_k}(\A)e_0\|.
\end{equation}

To this end, first we may use the similar manner in deriving
(\ref{11.2.5}) to conclude
\begin{equation}\label{5.5.1}
\|F'(x^\dag) e_k\|\lesssim \alpha_k^{1/2} \|e_0\|.
\end{equation}
Note that Assumption \ref{F2} and (\ref{12.2}) imply
\begin{align*}
\|[F'(x^\dag)-F'(x_k)]e_k\|
& \le (K_1+K_2)\|e_k\|\|F'(x^\dag) e_k\| \\
& \lesssim (K_1+K_2)\|e_0\| \|F'(x^\dag) e_k\|.
\end{align*}
Therefore
\begin{equation}\label{6.20.2}
\|F'(x_k)e_k\|\lesssim \|F'(x^\dag) e_k\|.
\end{equation}
In particular this implies
\begin{equation}\label{5.5.2}
\|F'(x_k) e_k\|\lesssim  \alpha_k^{1/2} \|e_0\|.
\end{equation}
By using (\ref{4.27.3}), (\ref{6.2.2}), Assumption \ref{F2},
(\ref{F2.1}) and Assumption \ref{A2.1}(a) we obtain
\begin{align*}
\|F'(x^\dag) e_{k+1}\| &\lesssim  \|r_{\alpha_k}(\A)\A^{1/2}
e_0\|+
(K_0+K_1)\|e_0\|\|e_k\| \alpha_k^{1/2}\\
&+K_2\|e_0\|\left(\|F'(x^\dag)e_k\|+ \|F'(x_k)e_k\|\right)\\
& +(K_1+K_2)\|e_k\|\|F'(x_k) e_k\|
+ K_1(K_1+K_2) \|e_k\|^2\|F'(x_k) e_k\|\\
&+ K_2(K_1+K_2)\|e_k\|\|F'(x_k)e_k\|^2\alpha_k^{-1/2}.
\end{align*}
Thus, with the help of (\ref{12.4}), (\ref{5.5.1}), (\ref{6.20.2}),
(\ref{5.5.2}) and (\ref{12.2}), we obtain
\begin{align*}
\|F'(x^\dag)e_{k+1}\|\lesssim  \|r_{\alpha_k}(\A)\A^{1/2}
e_0\|+\alpha_k^{1/2} \|r_{\alpha_k}(\A)e_0\|+ K_2\|e_0\|
\|F'(x^\dag) e_k\|.
\end{align*}
The estimates (\ref{5.5.0}) thus follows by Assumption \ref{A2.2}
and an induction argument if $K_2\|e_0\|$ is sufficiently small.

{\it Step 4}. Now we will establish some stability estimates. We
will show for all $0\le k\le n_\delta$ that
\begin{equation}\label{4.27.5}
\|x_k^\delta-x_k\|\lesssim \frac{\delta}{\sqrt{\alpha_k}}
\end{equation}
and
\begin{equation}\label{4.27.6}
\|F(x_k^\delta)-F(x_k)-y^\delta+y\|\le \left(1+
C(K_0+K_1+K_2)\|e_0\|\right)\delta.
\end{equation}

In order to show (\ref{4.27.5}), we use again the decomposition
(\ref{3.53}) for $x_{k+1}^\delta-x_{k+1}$. We still have
$\|I_2\|\le c_4\delta/\sqrt{\alpha_k}$. By using (\ref{6.2.1}) the
term $I_1$ can be estimated as
$$
\|I_1\|\lesssim  K_0\|e_0\|\|x_k^\delta-x_k\|.
$$
In order to estimate $I_3$, we note that Assumption \ref{F1}
implies
\begin{align*}
I_3 & = \int_0^1 \left[ g_{\alpha_k}(\A_k)\A_k
-g_{\alpha_k}(\A_k^\delta)\A_k^\delta\right]
\left[R(x_k-te_k, x_k)-I\right] e_k dt\\
& + \int_0^1 g_{\alpha_k}(\A_k^\delta)
F'(x_k^\delta)^*\left[F'(x_k^\delta)-F'(x_k)\right]
\left[R(x_k-te_k, x_k)-I\right] e_k dt\\
& =\int_0^1
\left[r_{\alpha_k}(\A_k^\delta)-r_{\alpha_k}(\A_k)\right]
\left[R(x_k-te_k, x_k)-I\right] e_k dt\\
& +\int_0^1 g_{\alpha_k}(\A_k^\delta) \A_k^\delta\left[I-R(x_k,
x_k^\delta)\right] \left[R(x_k-te_k, x_k)-I\right] e_k dt.
\end{align*}
Thus, by using (\ref{6.2.1}) and (\ref{12.2}), we obtain
$$
\|I_3\|\lesssim  K_0^2\|e_k\|^2\|x_k^\delta-x_k\| \lesssim
K_0^2\|e_0\|^2\|x_k^\delta-x_k\|.
$$
In order to estimate $I_4$, we again use Assumption \ref{F1} to
write
\begin{align*}
I_4 & = g_{\alpha_k}(\A_k^\delta)F'(x_k^\delta)^*
\left[F(x_k)-F(x_k^\delta)-F'(x_k^\delta)(x_k-x_k^\delta)\right]\\
&+g_{\alpha_k}(\A_k^\delta)F'(x_k^\delta)^*
\left[F'(x_k^\delta)-F'(x_k)\right] e_k\\
& = \int_0^1 g_{\alpha_k}(\A_k^\delta)\A_k^\delta
\left[R(x_k^\delta+t(x_k-x_k^\delta),
x_k^\delta)-I\right](x_k-x_k^\delta) dt\\
& + g_{\alpha_k}(\A_k^\delta)\A_k^\delta\left[I-R(x_k,
x_k^\delta)\right] e_k.
\end{align*}
Hence, we may use (\ref{11.2}) and (\ref{12.2}) to derive that
\begin{align*}
\|I_4\|&\lesssim
K_0\|x_k^\delta-x_k\|^2+K_0\|e_k\|\|x_k^\delta-x_k\| \lesssim
K_0\|e_0\|\|x_k^\delta-x_k\|.
\end{align*}
Combining the above estimates we obtain for $0\le k< n_\delta$
$$
\|x_{k+1}^\delta-x_{k+1}\|\lesssim
\frac{\delta}{\sqrt{\alpha_k}} + K_0\|e_0\| \|x_k^\delta-x_k\|.
$$
Thus, if $K_0\|e_0\|$ is sufficiently small, we can obtain
(\ref{4.27.5}) immediately.

Next we show (\ref{4.27.6}) by using (\ref{4.27.8}). We still have
(\ref{4.27.9}). In order to estimate $\|F'(x_k^\delta) I_1\|$,
$\|F'(x_k^\delta)I_3\|$ and $\|F'(x_k^\delta) I_4\|$, we note that
Assumption \ref{F2}, (\ref{12.2}), (\ref{5.5.1}) and
(\ref{4.27.5}) imply
\begin{align*}
\|[F'(x_k)&-F'(x^\dag)](x_k^\delta-x_k)\|\\
&\le  K_1\|e_k\|\|F'(x^\dag)(x_k^\delta-x_k)\|+K_2\|F'(x^\dag)
e_k\|\|x_k^\delta-x_k\|\\
&\lesssim  K_1\|e_0\| \|F'(x^\dag)(x_k^\delta-x_k)\|+
K_2\|e_0\|\delta,
\end{align*}
which in turn gives
\begin{equation}\label{4.28.6}
\|F'(x_k)(x_k^\delta-x_k)\|\lesssim  \|F'(x^\dag)
(x_k^\delta-x_k)\|+\delta.
\end{equation}
Similarly, we have
\begin{equation}\label{4.28.6.1}
\|F'(x_k^\delta)(x_k^\delta-x_k)\|\lesssim  \|F'(x^\dag)
(x_k^\delta-x_k)\|+\delta.
\end{equation}
Thus, by using (\ref{6.2.2}), (\ref{4.27.5}), (\ref{4.28.6})
and (\ref{4.28.6.1}) we have
\begin{align*}
\|F'(x_k^\delta) I_1\| &\lesssim  (K_0+ K_1)\|e_0\|\alpha_k^{1/2}
\|x_k^\delta-x_k\|\\
&\quad  +K_2\|e_0\|\left(\|F'(x_k^\delta)(x_k^\delta-x_k)\|
+\|F'(x_k)(x_k^\delta-x_k)\|\right)\\
& \lesssim (K_0+K_1+K_2)\|e_0\|\delta+ K_2\|e_0\|
\|F'(x^\dag)(x_k^\delta-x_k)\|.
\end{align*}
Moreover, by employing (\ref{4.28.3}), (\ref{6.2.1}), Assumption
\ref{F2}, (\ref{F2.1}), (\ref{12.2}), (\ref{5.5.2}),
(\ref{4.27.5}) and (\ref{4.28.6}), $\|F'(x_k^\delta) I_3\|$ can be
estimated as
\begin{align*}
\|F'(x_k^\delta) I_3\|& \lesssim (K_0+K_1)\|x_k^\delta-x_k\|\|u_k\|
 +  \alpha_k^{-1/2} K_2\|F'(x_k)(x_k^\delta-x_k)\|\|u_k\|\\
& \lesssim  (K_0+ K_1+K_2)(K_1+K_2) \|e_0\|^2 \delta\\
&\quad + K_2(K_1+K_2) \|e_0\|^2 \|F'(x^\dag)(x_k^\delta-x_k)\|.
\end{align*}
while, by using Assumption \ref{F2}, (\ref{F2.1}), (\ref{11.2}),
(\ref{12.2}), (\ref{4.28.1}), (\ref{4.27.5}), (\ref{4.28.6}) and
(\ref{4.28.6.1}), $\|F'(x_k^\delta) I_4\|$ can be estimated as
\begin{align*}
\|F'(x_k^\delta) I_4\| & \le
\|F(x_k^\delta)-F(x_k)-F'(x_k)(x_k^\delta-x_k)\|
+\|[F'(x_k^\delta)-F'(x_k)]e_k^\delta\|\\
& \lesssim (K_1+K_2)\|x_k^\delta-x_k\|
\|F'(x_k)(x_k^\delta-x_k)\|\\
&\quad +K_1\|x_k^\delta-x_k\|\|F'(x_k^\delta) e_k^\delta\|
+K_2\|F'(x_k^\delta)(x_k^\delta-x_k)\|\|e_k^\delta\|\\
& \lesssim (K_1+K_2) \|e_0\| \delta + (K_1+K_2)\|e_0\|
\|F'(x^\dag)(x_k^\delta-x_k)\|.
\end{align*}
Combining the above estimates we get
\begin{align}\label{4.28.7}
\|F'(&x_k^\delta)(x_{k+1}^\delta-x_{k+1})-y^\delta+y\| \nonumber\\
&\le  (1+C(K_0+K_1+K_2)\|e_0\|)  \delta + C (K_1+K_2)\|e_0\|
\|F'(x^\dag)(x_k^\delta-x_k)\|.
\end{align}
This in particular implies
$$
\|F'(x_k^\delta)(x_{k+1}^\delta-x_{k+1})\|\lesssim
\delta+(K_1+K_2)\|e_0\|\|F'(x^\dag)(x_k^\delta-x_k)\|.
$$
On the other hand, similar to the derivation of (\ref{4.28.6}), by
Assumption \ref{F2}, (\ref{11.2}), (\ref{4.28.1}) and (\ref{4.27.5})
we have for $0\le k<n_\delta$ that
$$
\|F'(x^\dag)(x_{k+1}^\delta-x_{k+1})\|\lesssim K_2\|e_0\|\delta
+\|F'(x_k^\delta)(x_{k+1}^\delta-x_{k+1})\|.
$$
Therefore
$$
\|F'(x^\dag)(x_{k+1}^\delta-x_{k+1})\|\lesssim
\delta+(K_1+K_2)\|e_0\|\|F'(x^\dag)(x_k^\delta-x_k)\|.
$$
Thus, if $(K_1+K_2)\|e_0\|$ is small enough, then we can conclude
\begin{equation}\label{6.27.1}
\|F'(x^\dag)(x_k^\delta-x_k)\|\lesssim \delta, \qquad 0\le k\le
n_\delta.
\end{equation}
Combining this with (\ref{4.28.7}) gives for $0\le k<n_\delta$
\begin{equation}\label{6.16.1}
\|F'(x_k^\delta)(x_{k+1}^\delta-x_{k+1})-y^\delta+y\|\le
\left(1+C(K_0+K_1+K_2)\|e_0\|\right) \delta.
\end{equation}
Hence, by using (\ref{6.16.1}), Assumption \ref{F2}, (\ref{11.2}),
(\ref{4.28.1.1}), (\ref{4.27.5}), (\ref{4.28.6.1}) and (\ref{6.27.1}),
we obtain for $0\le k\le n_\delta$
$$
\|F'(x_k^\delta)(x_k^\delta-x_k)-y^\delta+y\|\le
\left(1+C(K_0+K_1+K_2)\|e_0\|\right) \delta.
$$
This together with (\ref{F2.1}), (\ref{11.2}) and (\ref{12.2})
implies (\ref{4.27.6}).

{\it Step 5}. Now we are ready to complete the proof. By using the
definition of $k_\delta$, (\ref{4.27.6}), (\ref{F2.1}) and
(\ref{5.5.0}) we have for $0\le k<k_\delta$
\begin{align*}
\tau \delta &\le \|F(x_k^\delta)-y^\delta\|\le
\|F(x_k^\delta)-F(x_k)-y^\delta+y\|+\|F(x_k)-y\|\\
&\le (1+C(K_0+K_1+K_2)\|e_0\|)\delta+ C \|F'(x^\dag)e_k\|\\
&\le \left(1+C(K_0+K_1+K_2)\|e_0\|\right)\delta +
C\|r_{\alpha_k}(\A) \A^{1/2} e_0\|
+C\alpha_k^{1/2}\|r_{\alpha_k}(\A)e_0\|.
\end{align*}
Since $\tau>1$, by assuming $(K_0+K_1+K_2)\|e_0\|$ is small
enough, we can conclude for $0\le k<k_\delta$ that
\begin{equation}\label{6.17.3}
(\tau-1)\delta\lesssim \|r_{\alpha_k}(\A) \A^{1/2} e_0\|
+\alpha_k^{1/2}\|r_{\alpha_k}(\A)e_0\|.
\end{equation}

When $x_0-x^\dag$ satisfies (\ref{7}) for some $\omega\in X$ and
$0<\nu\le \bar{\nu}-1/2$, by using (\ref{6.17.3}), (\ref{12.6}),
(\ref{12.4}), (\ref{4.27.5}), (\ref{4.27.6}) and the definition of
$k_\delta$, we can employ the similar argument as in the last part
of the proof of Theorem \ref{T4.1} to conclude (\ref{T4.5.1}).

When $x_0-x^\dag$ satisfies (\ref{logsource}) for some $\omega\in
X$ and $\mu>0$, we have from Assumption \ref{A2.1}(a) and
(\ref{8.25.2}) that
$$
\|r_{\alpha_k}(\A) \A^{1/2} e_0\|
+\alpha_k^{1/2}\|r_{\alpha_k}(\A)e_0\|\le \left(c_0
b_{2\mu}^{1/2}+b_\mu\right) \alpha_k^{1/2}
\left(-\ln(\alpha_k/(2\alpha_0))\right)^{-\mu}\|\omega\|.
$$
This and (\ref{6.17.3}) imply that there exists a constant
$C_\mu>0$ such that
$$
(\tau-1)\delta< C_\mu \alpha_k^{1/2} \left(-\ln
(\alpha_k/(2\alpha_0))\right)^{-\mu}\|\omega\|, \quad 0\le
k<k_\delta.
$$
If we introduce the integer $\hat{k}_\delta$ by
$$
\alpha_{\hat{k}_\delta}^{1/2} \left(-\ln
(\alpha_{\hat{k}_\delta}/(2\alpha_0))\right)^{-\mu} \le
\frac{(\tau-1)\delta}{C_\mu\|\omega\|}<\alpha_k^{1/2} \left(-\ln
(\alpha_k/(2\alpha_0))\right)^{-\mu}, \quad 0\le k<\hat{k}_\delta,
$$
then $k_\delta\le \hat{k}_\delta$. Thus, by using (\ref{12.6}),
(\ref{4.27.5}), (\ref{4.27.6}), the definition of $k_\delta$ and
the fact $\|e_k\|\lesssim \|r_{\alpha_k}(\A) e_0\|\lesssim
(-\ln(\alpha_k/(2\alpha_0)))^{-\mu}\|\omega\|$, we can use the
similar manner in deriving (\ref{6.18.1}) to get
\begin{equation}\label{6.18.9}
\|e_{k_\delta}^\delta\|\lesssim
\left(-\ln(\alpha_{\hat{k}_\delta}/(2\alpha_0))\right)^{-\mu}
\|\omega\|+\frac{\delta}{\sqrt{\alpha_{\hat{k}_\delta}}} \lesssim
\frac{\delta}{\sqrt{\alpha_{\hat{k}_\delta}}}.
\end{equation}
By elementary argument we can show from (\ref{4}) and the
definition of $\hat{k}_\delta$ that there is a constant $c_\mu>0$
such that
$$
\alpha_{\hat{k}_\delta}\ge r^{-1} \alpha_{\hat{k}_\delta-1}\ge
c_\mu \left(\frac{\delta}{\|\omega\|}\right)^2 \left(1+\left|\ln
\frac{\delta}{\|\omega\|}\right|\right)^{2\mu}.
$$
This together with (\ref{6.18.9}) implies the estimate
(\ref{T4.5.2}). \hfill $\Box$
\end{proof}

\section{\bf Proof of Theorem \ref{T8.1}}
\setcounter{equation}{0}

If $x_0=x^\dag$, then $k_\delta=0$ and the result is trivial.
Therefore, we will assume $x_0\ne x^\dag$. We define
$\hat{k}_\delta$ to be the first integer such that
$$
\|r_{\alpha_{\hat{k}_\delta}}(\A)\A^{1/2} e_0\|
+\alpha_{\hat{k}_\delta}^{1/2}\|r_{\alpha_{\hat{k}_\delta}}(\A)e_0\|
\le c \delta,
$$
where the constant $c>0$ is chosen so that we may apply Lemma
\ref{L4.2} or (\ref{6.17.3}) to conclude $k_\delta\le
\hat{k}_\delta$. By (\ref{4}),  such $\hat{k}_\delta$ is clearly
well-defined and is finite. Moreover, by a contradiction argument
it is easy to show that
\begin{equation}\label{8.1.5}
\hat{k}_\delta\rightarrow \infty \quad \mbox{as }
\delta\rightarrow 0.
\end{equation}

Now, under the conditions of Theorem \ref{T8.1} (i) we use  Lemma
\ref{L3.2}, Lemma \ref{L3.3} and (\ref{3.3}), while under the
conditions of Theorem \ref{T8.1} (ii) we use (\ref{4.27.5}),
(\ref{4.27.6}), (\ref{12.4}) and (\ref{12.6}), then from
the definition of $k_\delta$ we have
\begin{align}\label{8.6}
\|e_{k_\delta}^\delta\|&\lesssim
\|e_{k_\delta}\|+\frac{\delta}{\sqrt{\alpha_{k_\delta}}} \lesssim
\|e_{k_\delta}\|+\frac{\delta}{\sqrt{\alpha_{\hat{k}_\delta}}}
\nonumber\\
&\lesssim \|e_{\hat{k}_\delta}\|
+\frac{1}{\sqrt{\alpha_{\hat{k}_\delta}}}
\left(\|F(x_{k_\delta})-y\|+\delta\right)\nonumber\\
&\lesssim \|r_{\alpha_{\hat{k}_\delta}}(\A) e_0\|
+\frac{\delta}{\sqrt{\alpha_{\hat{k}_\delta}}}\nonumber\\
&\lesssim \frac{\delta}{\sqrt{\alpha_{\hat{k}_\delta}}}.
\end{align}
We therefore need to derive the lower bound of
$\alpha_{\hat{k}_\delta}$ under the conditions on $e_0$. We set
for each $\alpha>0$ and $0\le \mu\le \bar{\nu}$
$$
c_\mu(\alpha):=\left[\int_0^{1/2} \alpha^{-2\mu}
r_\alpha(\lambda)^2 \lambda^{2\mu} d(E_\lambda \omega,
\omega)\right]^{1/2},
$$
where $\{E_\lambda\}$ denotes the spectral family generated by
$\A$. It is easy to see for each $0\le \mu<\bar{\nu}$ that $
\alpha^{-2\mu} r_{\alpha}(\lambda)^2 \lambda^{2\mu}$ is uniformly
bounded for all $\alpha>0$ and $\lambda\in [0, 1/2]$ and
$\alpha^{-2\mu}r_{\alpha}(\lambda)^2\lambda^{2\mu}\rightarrow 0$
as $\alpha\rightarrow 0$ for all $\lambda\in (0, 1/2]$. Since
$\omega\in {\mathcal N}(F'(x^\dag))^\perp$, by the dominated
convergence theorem we have for each $0\le \mu<\bar{\nu}$
\begin{equation}\label{8.3}
c_\mu(\alpha)\rightarrow 0 \quad \mbox{as } \alpha\rightarrow 0.
\end{equation}
By the definition of $\hat{k}_\delta$, (\ref{4}), Assumption
\ref{A2.2}, and the condition $e_0=\A^\nu \omega$ we have
\begin{align*}
\delta &\lesssim \|r_{\alpha_{\hat{k}_\delta-1}}(\A)\A^{1/2}e_0\|
+\alpha_{\hat{k}_\delta-1}\|r_{\alpha_{\hat{k}_\delta-1}}(\A) e_0\|\\
&\lesssim \|r_{\alpha_{\hat{k}_\delta}}(\A)\A^{1/2}e_0\|
+\alpha_{\hat{k}_\delta}\|r_{\alpha_{\hat{k}_\delta}}(\A) e_0\|\\
&\lesssim \alpha_{\hat{k}_\delta}^{\nu+1/2}
\left(c_\nu(\alpha_{\hat{k}_\delta})+c_{\nu+1/2}(\alpha_{\hat{k}_\delta})\right)
\end{align*}
This implies
\begin{equation}\label{8.5}
\alpha_{\hat{k}_\delta}\ge
\left(\frac{c\delta}{c_\nu(\alpha_{\hat{k}_\delta})
+c_{\nu+1/2}(\alpha_{\hat{k}_\delta})}\right)^{2/(1+2\nu)}.
\end{equation}
Combining (\ref{8.6}) and (\ref{8.5}) gives
\begin{equation*}
\|e_{k_\delta}^\delta\|\lesssim
\left(c_\nu(\alpha_{\hat{k}_\delta})
+c_{\nu+1/2}(\alpha_{\hat{k}_\delta})\right)^{1/(1+2\nu)}
\delta^{2\nu/(1+2\nu)}
\end{equation*}
Since $0\le \nu<\bar{\nu}-1/2$, this together with (\ref{8.1.5})
and (\ref{8.3}) gives the desired conclusion.

\section{\bf Applications}
\setcounter{equation}{0}

In this section we will consider some specific methods defined by
(\ref{3}) by presenting several examples of $\{g_\alpha\}$. We
will verify that those assumptions in Section 2 are satisfied for
these examples.

\subsection{\bf Example 1} We first consider the function
$g_\alpha$ given by
\begin{equation}\label{g1}
g_\alpha(\lambda)
=\frac{(\alpha+\lambda)^m-\alpha^m}{\lambda(\alpha+\lambda)^m},
\end{equation}
where $m\ge 1$ is a fixed integer. This function arises from the
iterated Tikhonov regularization of order $m$ for linear ill-posed
problems. Note that when $m=1$, the corresponding method defined
by (\ref{3}) is exactly the iteratively regularized Gauss-Newton
method (\ref{IRGN}). It is clear that the residual function
corresponding to (\ref{g1}) is
$$
r_\alpha(\lambda)=\frac{\alpha^m}{(\alpha+\lambda)^m}.
$$
By elementary calculations it is easy to see that Assumption
\ref{A2.1}(a) and (b) are satisfied with $ c_0=(m-1)^{m-1}/m^m$
and $c_1=m$. Moreover (\ref{A2.1.2}) is satisfied with
$$
c_3=\frac{1}{\sqrt{2m-1}}\left(\frac{2m-1}{2m}\right)^m \quad
\mbox{and}\quad
c_4=\left(1-\left(\frac{m+1}{m+3}\right)^m\right)\sqrt{m}.
$$
By using the elementary inequality
\begin{equation}\label{5.10}
1-(1-t)^n\le \sqrt{nt}, \qquad 0\le t\le 1
\end{equation}
for any integer $n\ge 0$, we have for $0<\alpha\le \beta$ and
$\lambda\ge 0$ that
$$
r_\beta(\lambda)-r_\alpha(\lambda) =r_\beta(\lambda)
\left[1-\left(1-\frac{\lambda/\alpha
-\lambda/\beta}{1+\lambda/\alpha}\right)^m\right] \le  m^{1/2}
\sqrt{\frac{\lambda}{\alpha}} r_\beta(\lambda).
$$
This verifies Assumption \ref{A2.1}(c) with $c_2=m^{1/2}$. It is
well-known that the qualification for $g_\alpha$ is $\bar{\nu}=m$
and (\ref{A2.1.3}) is satisfied with $d_\nu=(\nu/m)^\nu
((m-\nu)/m)^{m-\nu}\le 1$ for each $0\le \nu\le m$. For the
sequence $\{\alpha_k\}$ satisfying (\ref{4}), Assumption
\ref{A2.2} is satisfied with $c_5=r^m$.

In order to verify Assumption \ref{A2.3}, we note that
\begin{align}\label{6.20.1}
r_{\alpha}&(A^*A)-r_\alpha(B^*B)\nonumber\\
&=\alpha^m \sum_{i=1}^m (\alpha
I+A^*A)^{-i}[A^*(B-A)+(B^*-A^*)B](\alpha I+B^*B)^{-m-1+i}.
\end{align}
Thus, by using the estimates
$$
\|(\alpha I+A^*A)^{-i}(A^*A)^{\mu}\|\le \alpha^{-i+\mu} \quad
\mbox{for } i\ge 1 \mbox{ and } 0\le \mu\le 1,
$$
we can verify (\ref{3.41}), (\ref{3.40}) and (\ref{3.42}) easily.

Note also that $g_\alpha(\lambda)=\alpha^{-1}\sum_{i=1}^m
\alpha^i(\alpha+\lambda)^{-i}$. We have, by using (\ref{3.40}),
\begin{align*}
\|[g_\alpha(A^*A)-g_\alpha(B^*B)]B^*\| &\le
\alpha^{-1}\sum_{i=1}^m \|\alpha^i [(\alpha I+A^*A)^{-i}-(\alpha
I+B^*B)^{-i}]B^*\|\\
& \lesssim   \alpha^{-1} \|A-B\|,
\end{align*}
which verifies (\ref{3.43}).

Finally we verify Assumption \ref{A6.2} by assuming that $F$
satisfies Assumption \ref{F1} and Assumption \ref{F2}. We will use
the abbreviation $F_x':=F'(x)$ for $x\in B_\rho(x^\dag)$. With the
help of (\ref{6.20.1}) with $A=F_x'$ and $B=F_z'$, we obtain from
Assumption \ref{F1} that
\begin{align*}
\|r_{\alpha}&(F_x'^*F_x')-r_{\alpha}(F_z'^*F_z')\|\\
&\le \alpha^m \sum_{i=1}^m \|(\alpha I+F_x'^*F_x')^{-i}
F_x'^*F_x'[R(z, x)-I](\alpha I+F_z'^*F_z')^{-m-1+i}\|\\
& +\alpha^m \sum_{i=1}^m \|(\alpha I+F_x'^*F_x')^{-i}
[I-R(x, z)]^*F_z'^*F_z'(\alpha I+F_z'^*F_z')^{-m-1+i}\|\\
&\le \alpha^m \sum_{i=1}^m \alpha^{-i+1}\|I-R(z,
x)\|\alpha^{-m-1+i}+\alpha^m \sum_{i=1}^m \alpha^{-i}\|I-R(x, z)\|
\alpha^{-m+i} \\
&\lesssim \|I-R(z, x)\|+\|I-R(x, z)\|\\
&\lesssim K_0\|x-z\|
\end{align*}
which verifies (\ref{6.2.1}). In order to show (\ref{6.2.2}), we
note that, for any $a\in X$ and $b\in Y$ satisfying
$\|a\|=\|b\|=1$, (\ref{6.20.1}) implies
\begin{align*}
(F_x'[r_\alpha&(F_x'^*F_x')-r_\alpha(F_z'^*F_z')]a, b)\\
& \le  \alpha^m \sum_{i=1}^m \alpha^{-i+1}\|(F_z'-F_x')(\alpha
I+F_z'^*F_z')^{-m-1+i}a\|\|b\|\\
&+\alpha^m \sum_{i=1}^m \alpha^{-m-1/2+i} \|(F_z'-F_x') (\alpha I+
F_x'^*F_x')^{-i}F_x'^* b\|\|a\|.
\end{align*}
Thus, by using Assumption \ref{F2}, we have
\begin{align*}
(F_x'[r_\alpha & (F_x'^*F_x')-r_\alpha(F_z'^*F_z')]a, b)\\
& \le  \alpha^m \sum_{i=1}^m \alpha^{-i+1} K_1\|x-z\|\|F_z'(\alpha
I +F_z'^*F_z')^{-m-1+i}a\|\\
&+\alpha^m \sum_{i=1}^m \alpha^{-i+1} K_2\|F_z'(x-z)\|\|(\alpha I
+F_z'^*F_z')^{-m-1+i}a\|\\
&+\alpha^m \sum_{i=1}^m \alpha^{-m-1/2+i} K_1\|x-z\|\|F_x'(\alpha
I +F_x'^*F_x')^{-i} F_x'^* b\|\\
&+\alpha^m \sum_{i=1}^m \alpha^{-m-1/2+i} K_2 \|F_x'(x-z)\|
\|(\alpha I+ F_x'^*F_x')^{-i} F_x'^* b\|\\
& \lesssim   K_1\alpha^{1/2} \|x-z\| +K_2\left(\|F_x'(x-z)\|
+\|F_z'(x-z)\|\right).
\end{align*}
This verifies (\ref{6.2.2}).

The above analysis shows that Theorem \ref{T4.1}, Theorem
\ref{T4.5} and Theorem \ref{T8.1} are applicable for the method
defined by (\ref{3}) and (\ref{2.4}) with $g_\alpha$ given by
(\ref{g1}). Thus we obtain the following result.

\begin{corollary}\label{C6.1}
Let $F$ satisfy (\ref{2.1}) and (\ref{2.3}), let $\{\alpha_k\}$ be
a sequence of numbers satisfying (\ref{4}), and let
$\{x_k^\delta\}$ be defined by (\ref{3}) with $g_\alpha$ given by
(\ref{g1}) for some fixed integer $m\ge 1$. Let $k_\delta$ be the
first integer satisfying (\ref{2.4}) with $\tau>1$.

(i) If $F$ satisfies Assumption \ref{Lip} and if $x_0-x^\dag$
satisfies (\ref{7}) for some $\omega\in X$ and $1/2\le \nu\le
m-1/2$, then
$$
\|x_{k_\delta}^\delta-x^\dag\| \le
C_\nu\|\omega\|^{1/(1+2\nu)}\delta^{2\nu/(1+2\nu)}
$$
provided $L\|u\|\le \eta_0$, where $u\in {\mathcal
N}(F'(x^\dag)^*)^\perp\subset Y$ is the unique element such that
$x_0-x^\dag=F'(x^\dag)^* u$, $\eta_0>0$ is a constant depending
only on $r$, $\tau$ and $m$,  and $C_\nu>0$ is a constant
depending only on $r$, $\tau$, $m$ and $\nu$.

(ii) Let $F$ satisfy Assumption \ref{F1} and Assumption \ref{F2},
and let $x_0-x^\dag \in N(F'(x^\dag))^\perp$. Then there exists a
constant $\eta_1>0$ depending only on $r$, $\tau$ and $m$ such
that if $(K_0+K_1+K_2)\|x_0-x^\dag\|\le \eta_1$ then
$$
\lim_{\delta\rightarrow 0} x_{k_\delta}^\delta=x^\dag,
$$
moreover, when $x_0-x^\dag$ satisfies (\ref{7}) for some
$\omega\in X$ and $0<\nu\le m-1/2$, then
$$
\|x_{k_\delta}^\delta-x^\dag\|\le C_\nu \|\omega\|^{1/(1+2\nu)}
\delta^{2\nu/(1+2\nu)}
$$
for some constant $C_\nu>0$ depending only on $r$, $\tau$, $m$ and
$\nu$; while when $x_0-x^\dag$ satisfies (\ref{logsource}) for
some $\omega\in X$ and $\mu>0$, then
\begin{equation*}
\|x_{k_\delta}^\delta-x^\dag\|\le C_\mu
\|\omega\|\left(1+\left|\ln
\frac{\delta}{\|\omega\|}\right|\right)^{-\mu}
\end{equation*}
for some constant $C_\mu$ depending only on $r$, $\tau$, $m$ and
$\mu$.
\end{corollary}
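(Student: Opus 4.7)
{\it of Corollary \ref{C6.1}}.
The strategy is to verify that the choice $g_\alpha(\lambda)=\left[(\alpha+\lambda)^m-\alpha^m\right]/[\lambda(\alpha+\lambda)^m]$ satisfies every hypothesis on $\{g_\alpha\}$ imposed in Theorem \ref{T4.1}, Theorem \ref{T4.5} and Theorem \ref{T8.1}, and then invoke these theorems directly. Essentially all the verifications have been sketched in the discussion preceding the statement of the corollary; what remains is simply to assemble them and apply the abstract results, making sure the constants $c_0,\dots,c_8$ can be chosen depending only on $r,\tau$ and $m$.

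First I would record the verification of Assumption \ref{A2.1} for $r_\alpha(\lambda)=\alpha^m/(\alpha+\lambda)^m$: parts (a) and (b) follow by elementary calculus with $c_0=(m-1)^{m-1}/m^m$ and $c_1=m$, and part (c) follows from the binomial-type identity
$r_\beta(\lambda)-r_\alpha(\lambda)=r_\beta(\lambda)\bigl[1-(1-t)^m\bigr]$ with $t=(\lambda/\alpha-\lambda/\beta)/(1+\lambda/\alpha)\in[0,1]$, together with the inequality $1-(1-t)^m\le \sqrt{mt}$ from (\ref{5.10}). The qualification is $\bar\nu=m$ since $r_\alpha(\lambda)\lambda^\nu\le d_\nu\alpha^\nu$ with $d_\nu=(\nu/m)^\nu((m-\nu)/m)^{m-\nu}\le 1$ for $0\le\nu\le m$. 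For any $\{\alpha_k\}$ obeying (\ref{4}), Assumption \ref{A2.2} holds with $c_5=r^m$ since $r_{\alpha_k}/r_{\alpha_{k+1}}=(\alpha_k/\alpha_{k+1})^m[(\alpha_{k+1}+\lambda)/(\alpha_k+\lambda)]^m\le r^m$.

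Next I would verify Assumption \ref{A2.3} and Assumption \ref{A6.2} using the telescoping identity (\ref{6.20.1}), which expresses $r_\alpha(A^*A)-r_\alpha(B^*B)$ as $\alpha^m$ times a sum of $m$ terms of the form $(\alpha I+A^*A)^{-i}[A^*(B-A)+(B^*-A^*)B](\alpha I+B^*B)^{-m-1+i}$. Combined with the spectral estimate $\|(\alpha I+T^*T)^{-i}(T^*T)^\mu\|\le \alpha^{-i+\mu}$ for $0\le\mu\le 1$, this immediately yields (\ref{3.41}), (\ref{3.40}) and (\ref{3.42}); the bound (\ref{3.43}) follows from the decomposition $g_\alpha=\alpha^{-1}\sum_{i=1}^m \alpha^i(\alpha+\cdot)^{-i}$ together with the $i=1,\dots,m$ instances of (\ref{3.40}). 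For Assumption \ref{A6.2}, one inserts Assumption \ref{F1} to rewrite $F'(x)-F'(z)=F'(z)(R(x,z)-I)$ (and its adjoint form) in each summand of (\ref{6.20.1}) applied to $A=F'(x)$, $B=F'(z)$; this produces the estimate (\ref{6.2.1}). For (\ref{6.2.2}) one tests against unit vectors $a\in X$, $b\in Y$ and, on each of the $2m$ resulting summands, applies Assumption \ref{F2} to expose factors of the form $\|F'(z)(x-z)\|$, $\|F'(x)(x-z)\|$ or $K_1\alpha^{1/2}\|x-z\|$, while absorbing the spectral factors via the same resolvent estimates as above.

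Once all assumptions are verified with constants depending only on $r$, $\tau$ and $m$, part (i) follows directly from Theorem \ref{T4.1} applied with $\bar\nu=m$, and part (ii) follows from Theorem \ref{T4.5} (for the H\"older and logarithmic rates) and Theorem \ref{T8.1}(ii) (for the plain convergence $\lim_{\delta\to 0}x_{k_\delta}^\delta=x^\dag$, corresponding to $\nu=0$ in the source condition $x_0-x^\dag\in \mathcal{N}(F'(x^\dag))^\perp$). The only subtlety worth flagging is bookkeeping: one must confirm that the smallness thresholds $\eta_0$ and $\eta_1$ and the constants $C_\nu$, $C_\mu$ coming out of the abstract theorems depend only on $r,\tau,m$ (and additionally on $\nu$ or $\mu$), which is immediate since all of $c_0,\dots,c_8$ have been pinned down as explicit functions of $m$. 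The main obstacle in the whole verification is the derivation of (\ref{6.2.2}) under Assumptions \ref{F1} and \ref{F2}, since it is the one place where both nonlinearity conditions must be combined with the resolvent calculus on every term of (\ref{6.20.1}); all other verifications are routine bookkeeping. \hfill $\Box$
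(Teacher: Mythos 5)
Your proposal is correct and follows essentially the same route as the paper: the paper's own justification of Corollary \ref{C6.1} is precisely the verification of Assumptions \ref{A2.1}, \ref{A2.2}, \ref{A2.3} and \ref{A6.2} for $g_\alpha$ in (\ref{g1}) via the identity (\ref{6.20.1}), the resolvent estimates, and the inequality (\ref{5.10}), followed by a direct appeal to Theorems \ref{T4.1}, \ref{T4.5} and \ref{T8.1} with $\bar\nu=m$. Your use of Theorem \ref{T8.1}(ii) with $\nu=0$ for the plain convergence and your bookkeeping of the constants' dependence on $r$, $\tau$, $m$ match the paper's argument.
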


Corollary \ref{C6.1} with $m=1$ reproduces those convergence
results in \cite{BNS97,H97} for the iteratively regularized
Gauss-Newton method (\ref{IRGN}) together with the discrepancy
principle (\ref{2.4}) under somewhat different conditions on $F$.
Note that those results in \cite{BNS97,H97} require $\tau$ be
sufficiently large, while our result is valid for any $\tau>1$.
This less restrictive requirement on $\tau$ is important in
numerical computations since the absolute error could increase
with respect to $\tau$. Moreover, when $x_0-x^\dag$ satisfies
(\ref{7}) with $\nu=1/2$, Corollary \ref{C6.1} with $m=1$ improves
the corresponding result in \cite{BNS97}, since we only need the
Lipschitz condition on $F'$ here.

Corollary \ref{C6.1} shows that the method defined by (\ref{3})
and (\ref{2.4}) with $g_\alpha$ given by (\ref{g1}) is order
optimal for $0<\nu\le m-1/2$. However, we can not expect better
rate of convergence than $O(\delta^{(2m-1)/(2m)})$ even if
$x_0-x^\dag$ satisfies (\ref{7}) with $m-1/2<\nu\le m$. An a
posteriori stopping rule without such saturation has been studied
in \cite{Jin00,Jin08} for the iteratively regularized
Gauss-Newton method (\ref{IRGN}).

\subsection{\bf Example 2} We consider the function $g_\alpha$ given by
\begin{equation}\label{g2}
g_\alpha(\lambda)=\sum_{i=0}^{[1/\alpha]}(1-\lambda)^i
\end{equation}
which arises from the Landweber iteration applying to linear
ill-posed problems. With such choice of $g_\alpha$, the method
(\ref{3}) becomes
$$
x_{k+1}^\delta=x_0-\sum_{i=0}^{[1/\alpha_k]}\left(I-F'(x_k^\delta)^*
F'(x_k^\delta)\right)^i F'(x_k^\delta)^*
\left(F(x_k^\delta)-y^\delta-F'(x_k^\delta)(x_k^\delta-x_0)\right)
$$
which is equivalent to the form
\begin{align*}
&x_{k, 0}^\delta=x_0,\\
&x_{k, i+1}^\delta=x_{k, i}^\delta-F'(x_k^\delta)^*
\left(F(x_k^\delta)-y^\delta+F'(x_k^\delta) (x_{k,
i}^\delta-x_k^\delta)\right), \qquad 0\le i\le [1/\alpha_k],\\
&x_{k+1}^\delta=x_{k, [1/\alpha_k]+1}^\delta.
\end{align*}
This method has been considered in \cite{K97} and is called the
Newton-Landweber iteration.

Note that the corresponding residual function is
\begin{equation}\label{8.24.2008}
r_\alpha(\lambda)=(1-\lambda)^{[1/\alpha]+1}.
\end{equation}
It is easy to see that Assumption \ref{A2.1}(a), (b) and
(\ref{A2.1.2}) hold with
$$
c_0=\frac{1}{2}, \quad c_1=2, \quad c_3=\frac{\sqrt{2}}{3} \quad
\mbox{and} \quad c_4=\sqrt{2}.
$$
Moreover, by (\ref{5.10}) we have for any $0<\alpha\le \beta$ that
$$
r_\beta(\lambda)-r_\alpha(\lambda) =r_\beta(\lambda)
\left(1-(1-\lambda)^{[1/\alpha]-[1/\beta]}\right) \le
\sqrt{\frac{\lambda}{\alpha}}r_\beta(\lambda).
$$
This verifies Assumption \ref{A2.1}(c) with $c_2=1$. It is
well-known that the qualification of linear Landweber iteration is
$\bar{\nu}=\infty$ and (\ref{A2.1.3}) is satisfied with
$d_\nu=\nu^\nu$ for each $0\le \nu< \infty$.

In order to verify Assumption \ref{A2.2}, we restrict the sequence
$\{\alpha_k\}$ to be of the form $\alpha_k:=1/n_k$, where
$\{n_k\}$ is a sequence of positive integers such that
\begin{equation}\label{5.1}
0\le n_{k+1}-n_k\le q \quad \mbox{and}\quad \lim_{k\rightarrow
\infty} n_k=\infty
\end{equation}
for some $q\ge 1$. Then for $\lambda\in [0, 1/2]$ we have
$$
r_{\alpha_k}(\lambda)=(1-\lambda)^{n_k-n_{k+1}}
r_{\alpha_{k+1}}(\lambda)\le 2^q r_{\alpha_{k+1}}(\lambda).
$$
Thus Assumption \ref{A2.2} is also true.

In order to verify Assumption \ref{A2.3}, we will use some
techniques from \cite{HNS95,K97} and the following well-known
estimates
\begin{equation}\label{5.2}
\|(I-A^*A)^j (A^*A)^\nu\|\le \nu^\nu (j+\nu)^{-\nu}, \quad j\ge 0,
\,\, \nu\ge 0
\end{equation}
for any bounded linear operator $A$ satisfying $\|A\|\le 1$.

For any $\alpha>0$, we set $k:=[1/\alpha]$. Let $A$ and $B$ be any
two bounded linear operators satisfying $\|A\|, \|B\|\le 1$. Then
it follows from (\ref{8.24.2008}) that
\begin{align}\label{5.3}
r_\alpha(A^*A)&-r_\alpha(B^*B) =\sum_{j=0}^k
(I-A^*A)^j\left[A^*(B-A)+(B^*-A^*)B\right](I-B^*B)^{k-j}.
\end{align}
By using (\ref{5.2}) we have
\begin{align*}
\|r_{\alpha}(A^*A)-r_{\alpha}(B^*B)\|
\lesssim & \sum_{j=0}^k \left((j+1)^{-1/2}+(k+1-j)^{-1/2}\right)\|A-B\|\\
\lesssim & \sqrt{k} \|A-B\| \lesssim
\frac{1}{\sqrt{\alpha}}\|A-B\|.
\end{align*}
This verifies (\ref{3.41}).

From (\ref{5.3}) we also have
$A\left[r_{\alpha}(A^*A)-r_\alpha(B^*B)\right]B^*=J_1+J_2$, where
\begin{align*}
J_1&:=\sum_{j=0}^k (I-AA^*)^j AA^*(B-A)(I-B^*B)^{k-j}B^*,\\
J_2&:= \sum_{j=0}^k A(I-A^*A)^j(B^*-A^*)(I-BB^*)^{k-j}BB^*.
\end{align*}
In order to verify (\ref{3.42}), it suffices to show
$\|J_1\|\lesssim (k+1)^{-1/2}\|A-B\|$ since the estimate on $J_2$
is exactly the same. We write $J_1=J_1^{(1)}+J_2^{(2)}$, where
\begin{align*}
J_1^{(1)}&:=\sum_{j=0}^{[k/2]}(I-AA^*)^jAA^* (B-A)(I-B^*B)^{k-j}B^*,\\
J_1^{(2)}&:=\sum_{j=[k/2]+1}^k (I-AA^*)^j AA^*
(B-A)(I-B^*B)^{k-j}B^*.
\end{align*}
With the help of (\ref{5.2}), we can estimate $J_1^{(2)}$ as
\begin{align*}
\|J_1^{(2)}\|&\lesssim \sum_{j=[k/2]+1}^k (j+1)^{-1} (k+j-1)^{-1/2} \|A-B\|\\
&\lesssim (k+1)^{-1} \sum_{j=0}^k (k+1-j)^{-1/2} \|A-B\|\lesssim
(k+1)^{-1/2} \|A-B\|.
\end{align*}
In order to estimate $J_1^{(1)}$, we use $AA^*=I-(I-AA^*)$ to
rewrite it as
\begin{align*}
J_1^{(1)}=&\sum_{j=0}^{[k/2]} (I-AA^*)^j(B-A)(I-B^*B)^{k-j}B^*\\
&-\sum_{j=1}^{[k/2]+1} (I-AA^*)^j(B-A)(I-B^*B)^{k+1-j}B^*\\
=& (B-A)(I-B^*B)^k B^*-(I-AA^*)^{[k/2]+1}(B-A)(I-B^*B)^{k-[k/2]}B^*\\
&+\sum_{j=1}^{[k/2]} (I-AA^*)^j(B-A)(I-B^*B)^{k-j}(B^*B)B^*.
\end{align*}
Thus, in view of (\ref{5.2}), we obtain
\begin{align*}
\|J_1^{(1)}\|\lesssim & (k+1)^{-1/2}\|A-B\|+(k-[k/2]+1)^{-1/2}\|A-B\|\\
&+\sum_{j=1}^{[k/2]} (k-j+1)^{-3/2}\|A-B\|\\
\lesssim &(k+1)^{-1/2}\|A-B\|.
\end{align*}
We thus verify (\ref{3.42}). The verification of (\ref{3.40}) can
be done similarly.

Applying the estimate (\ref{3.40}), we obtain
\begin{align*}
\|\left[g_\alpha(A^*A)-g_\alpha(B^*B)\right]B^*\| &\le
\sum_{j=1}^k\| \left[(I-A^*A)^j-(I-B^*B)^j\right]B^*\|\\
&\lesssim k\|A-B\|\lesssim \frac{1}{\alpha}\|A-B\|,
\end{align*}
which verifies (\ref{3.43}).

Finally we verify Assumption \ref{A6.2} by assuming that $F$
satisfies Assumption \ref{F1} and Assumption \ref{F2}. From
(\ref{5.3}) and Assumption \ref{F1} it follows that
\begin{align*}
r_\alpha(F_x'^* F_x')-r_\alpha(F_z'^*F_z') &= \sum_{j=0}^k
(I-F_x'^*F_x')^j F_x'^*F_x' (R(z, x)-I)
(I-F_z'^*F_z')^{k-j}\\
&+ \sum_{j=0}^k (I-F_x'^*F_x')^j(I-R(x, z))^* F_z'^*F_z'
(I-F_z'^*F_z')^{k-j}.
\end{align*}
Thus we may use the argument in the verification of (\ref{3.42})
to conclude
$$
\|r_\alpha(F_x'^* F_x')-r_\alpha(F_z'^*F_z')\|\lesssim \|I-R(x,
z)\|+\|I-R(z, x)\|\lesssim K_0\|x-z\|.
$$
This verifies (\ref{6.2.1}).

By using (\ref{5.3}) and Assumption \ref{F1} we also have for any
$w\in X$
\begin{align*}
F_x'[r_\alpha (F_x'^*F_x')-r_\alpha(F_z'^*F_z')]w=Q_1+Q_2+Q_3+Q_4,
\end{align*}
where
\begin{align*}
Q_1&=\sum_{j=0}^{[k/2]} (I-F_x'F_x'^*)^j (F_x'F_x'^*) (F_z'-F_x')
(I-F_z'^*F_z')^{k-j}w,\\
Q_2&=\sum_{j=[k/2]+1}^k (I-F_x'F_x'^*)^j (F_x'F_x'^*) (F_z'-F_x')
(I-F_z'^*F_z')^{k-j}w,\\
Q_3&=\sum_{j=0}^{[k/2]} (I-F_x'F_x'^*)^j F_x'(I-R(x, z))^*
(F_z'^*F_z')(I-F_z'^*F_z')^{k-j}w,\\
Q_4&=\sum_{j=[k/2]+1}^k (I-F_x'F_x'^*)^j F_x'(I-R(x, z))^*
(F_z'^*F_z')(I-F_z'^*F_z')^{k-j}w.
\end{align*}
By employing (\ref{5.2}) it is easy to see that
\begin{align*}
\|Q_3\|\lesssim \sum_{j=0}^{[k/2]} (j+1)^{-1/2} (k-j+1)^{-1}
\|I-R(x, z)\|\|w\|\lesssim (k+1)^{-1/2} K_0\|x-z\|\|w\|.
\end{align*}
With the help of (\ref{5.2}) and Assumption \ref{F2}, we have
\begin{align*}
\|Q_2\| & \lesssim \sum_{j=[k/2]+1}^k (j+1)^{-1}
\|(F_z'-F_x')(I-F_z'^*F_z')^{k-j} w\|\\
& \lesssim K_1\|x-z\|\sum_{j=[k/2]+1}^k (j+1)^{-1}(k-j+1)^{-1/2}
\|w\| \\
& +K_2\|F_z'(x-z)\| \sum_{j=[k/2]+1}^k (j+1)^{-1} \|w\|\\
& \lesssim  (k+1)^{-1/2} K_1\|x-z\| \|w\|+ K_2\|F_z'(x-z)\|\|w\|.
\end{align*}
By using the argument in the verification of (\ref{3.42}) and
Assumption \ref{F2} we obtain
\begin{align*}
\|Q_1\| & \lesssim \|(F_z'-F_x')(I-F_z'^*F_z')^k w\|
+\|(F_z'-F_x')(I-F_z'^*F_z')^{k-[k/2]}w\|\\
& + \sum_{j=1}^{[k/2]}
\|(F_z'-F_x')(I-F_z'^*F_z')^{k-j}(F_z'^*F_z') w\|\\
& \lesssim (k+1)^{-1/2} K_1\|x-z\|\|w\|+ K_2\|F_z'(x-z)\|\|w\|\\
& + \sum_{j=1}^{[k/2]} \left(K_1\|x-z\| (k-j+1)^{-3/2}
+K_2\|F_z'(x-z)\| (k-j+1)^{-1}\right)\|w\|\\
& \lesssim (k+1)^{-1/2} K_1\|x-z\|\|w\|+ K_2\|F_z'(x-z)\|\|w\|.
\end{align*}
Using Assumption \ref{F1} and the the similar argument in the
verification of (\ref{3.42}) we also have
\begin{align*}
\|Q_4\|\lesssim (k+1)^{-1/2} \|I-R(x, z)\|\|w\|\lesssim
(k+1)^{-1/2} K_0\|x-z\|\|w\|.
\end{align*}
Combining the above estimates we thus obtain for any $w\in X$
\begin{align*}
\|F_x'[r_\alpha & (F_x'^*F_x')-r_\alpha(F_z'^*F_z')]w\|\\
&\lesssim (K_0+K_1)\alpha^{1/2} \|x-z\|
\|w\|+K_2\|F_z'(x-z)\|\|w\|
\end{align*}
which implies (\ref{6.2.2}).

Therefore, Theorem \ref{T4.1}, Theorem \ref{T4.5} and Theorem
\ref{T8.1} are applicable for the method defined by (\ref{3}) and
(\ref{2.4}) with $g_\alpha$ given by (\ref{g2}).

The similar argument as above also applies to the situation where
$g_\alpha$ is given by
$$
g_\alpha(\lambda):=\sum_{i=0}^{[1/\alpha]}(1+\lambda)^{-i}
$$
which arise from the Lardy's method for solving linear ill-posed
problems.

In summary, we obtain the following result.

\begin{corollary}
Let $F$ satisfy (\ref{2.1}) and (\ref{2.3}), and let
$\{\alpha_k\}$ be a sequence given by $\alpha_k=1/n_k$, where
$\{n_k\}$ is a sequence of positive integers satisfying
(\ref{5.1}) for some $q \ge 1$. Let $\{x_k^\delta\}$ be defined by
(\ref{3}) with
$$
g_\alpha(\lambda)=\sum_{i=0}^{[1/\alpha]}(1-\lambda)^i\qquad
\mbox{or} \qquad
g_\alpha(\lambda)=\sum_{i=0}^{[1/\alpha]}(1+\lambda)^{-i},
$$
and let $k_\delta$ be the first integer satisfying (\ref{2.4})
with $\tau>1$.

(i) If $F$ satisfies Assumption \ref{Lip}, and if $x_0-x^\dag$
satisfies (\ref{7}) for some $\omega\in X$ and $\nu\ge 1/2$, then
$$
\|x_{k_\delta}^\delta-x^\dag\| \le
C_\nu\|\omega\|^{1/(1+2\nu)}\delta^{2\nu/(1+2\nu)}
$$
provided $L\|u\|\le \eta_0$, where $u\in {\mathcal
N}(F'(x^\dag)^*)^\perp\subset Y$ is the unique element such that
$x_0-x^\dag=F'(x^\dag)^* u$, $\eta_0>0$ is a constant depending
only on $\tau$ and $q$, and $C_\nu$ is a constant depending only
on $\tau$, $q$ and $\nu$.

(ii) Let $F$ satisfy Assumption \ref{F1} and Assumption \ref{F2},
and let $x_0-x^\dag \in N(F'(x^\dag))^\perp$. Then there exists a
constant $\eta_1>0$ depending only on $\tau$ and $q$ such that if
$(K_0+K_1+K_2)\|x_0-x^\dag\|\le \eta_1$ then
$$
\lim_{\delta\rightarrow 0} x_{k_\delta}^\delta=x^\dag,
$$
moreover, when $x_0-x^\dag$ satisfies (\ref{7}) for some
$\omega\in X$ and $\nu>0$, then
$$
\|x_{k_\delta}^\delta-x^\dag\|\le C_\nu \|\omega\|^{1/(1+2\nu)}
\delta^{2\nu/(1+2\nu)}
$$
for some constant $C_\nu>0$ depending only on $\tau$, $q$ and
$\nu$; while when $x_0-x^\dag$ satisfies (\ref{logsource}) for
some $\omega\in X$ and $\mu>0$, then
\begin{equation*}
\|x_{k_\delta}^\delta-x^\dag\|\le C_\mu
\|\omega\|\left(1+\left|\ln
\frac{\delta}{\|\omega\|}\right|\right)^{-\mu}
\end{equation*}
for some constant $C_\mu$ depending only on $\tau$, $q$ and $\mu$.
\end{corollary}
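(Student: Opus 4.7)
The plan is to reduce the corollary to an application of Theorems \ref{T4.1}, \ref{T4.5}, and \ref{T8.1} by verifying, for the specified $g_\alpha$, all the abstract hypotheses imposed on $\{g_\alpha\}$, namely Assumption \ref{A2.1}, Assumption \ref{A2.2}, Assumption \ref{A2.3}, and Assumption \ref{A6.2}, together with the qualification statement $\bar{\nu}\ge 1$ (in fact $\bar{\nu}=\infty$). Once these are in hand, the three convergence statements (i) and (ii) in the corollary fall out from the three theorems applied verbatim with the appropriate source condition, noting that under the stopping rule (\ref{2.4}) the hypotheses (\ref{2.1}), (\ref{2.3}), and the structural assumptions on $F$ are all inherited from the hypotheses of the corollary.

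First I would treat Assumption \ref{A2.1} for $r_\alpha(\lambda)=(1-\lambda)^{[1/\alpha]+1}$: parts (a) and (b) are elementary, obtained by maximizing $r_\alpha(\lambda)\lambda$ and $\lambda g_\alpha(\lambda)$ on $[0,1/2]$, and the monotonicity in $\alpha$ is immediate from the integer exponent. Part (c) is obtained by writing $r_\beta(\lambda)-r_\alpha(\lambda)=r_\beta(\lambda)[1-(1-\lambda)^{[1/\alpha]-[1/\beta]}]$ and invoking the elementary inequality $1-(1-t)^n\le\sqrt{nt}$. The qualification $\bar{\nu}=\infty$ and the sharper inequality (\ref{A2.1.3}) follow from the standard spectral bound $\|(I-A^*A)^j(A^*A)^\nu\|\le \nu^\nu(j+\nu)^{-\nu}$. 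For Assumption \ref{A2.2}, since $\alpha_k=1/n_k$ and $n_{k+1}-n_k\le q$, one just writes $r_{\alpha_k}(\lambda)=(1-\lambda)^{n_k-n_{k+1}}r_{\alpha_{k+1}}(\lambda)\le 2^q r_{\alpha_{k+1}}(\lambda)$.

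The main obstacle will be the commutator estimates in Assumption \ref{A2.3} and their strengthened versions in Assumption \ref{A6.2}, which must be done by hand since $g_\alpha$ is a polynomial of growing degree. The starting point is the telescoping identity
\begin{equation*}
r_\alpha(A^*A)-r_\alpha(B^*B)=\sum_{j=0}^{k}(I-A^*A)^j[A^*(B-A)+(B^*-A^*)B](I-B^*B)^{k-j}
\end{equation*}
with $k=[1/\alpha]$. The estimate (\ref{3.41}) then follows by summing the crude bounds $(j+1)^{-1/2}$ on each side. The harder bound (\ref{3.42}) requires the splitting $A[r_\alpha(A^*A)-r_\alpha(B^*B)]B^*=J_1+J_2$ and then the further splitting of $J_1$ at the midpoint of the summation index; on the upper half $AA^*$ can be absorbed by a small factor $(j+1)^{-1}$, while on the lower half one uses $AA^*=I-(I-AA^*)$ to turn the sum into a telescoping one plus a residue in $B^*B$, producing an extra factor $(k-j+1)^{-1}$. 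Establishing these bounds carefully is the crux of the argument; estimates (\ref{3.40}) and (\ref{3.43}) follow by the same splitting and by writing $g_\alpha$ as a telescoping difference.

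Finally, Assumption \ref{A6.2} is verified by inserting Assumption \ref{F1} (to convert $F'(x)-F'(z)$ into an operator involving $R(x,z)-I$) and Assumption \ref{F2} (to control the action on a test vector $w$) into the same telescoping identity, and then repeating the midpoint-splitting argument with the spectral bounds $\|F'(\cdot)(I-F'(\cdot)^*F'(\cdot))^{k-j}\|\lesssim (k-j+1)^{-1/2}$. Once all these assumptions are verified, the H\"older-rate conclusion in (i) follows from Theorem \ref{T4.1}, the H\"older and logarithmic rates in (ii) follow from Theorem \ref{T4.5}, and the convergence $x_{k_\delta}^\delta\to x^\dag$ without a source condition follows from Theorem \ref{T8.1} applied with $\nu=0$. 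The case $g_\alpha(\lambda)=\sum_{i=0}^{[1/\alpha]}(1+\lambda)^{-i}$ (Lardy's method) is handled by the same template, with the resolvent identity replacing the Neumann expansion in the telescoping step.
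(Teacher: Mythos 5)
Your proposal is correct and follows essentially the same route as the paper: verification of Assumption \ref{A2.1} via the elementary inequality $1-(1-t)^n\le\sqrt{nt}$, Assumption \ref{A2.2} with constant $2^q$, Assumptions \ref{A2.3} and \ref{A6.2} via the telescoping identity for $r_\alpha(A^*A)-r_\alpha(B^*B)$ together with the midpoint splitting and the substitution $AA^*=I-(I-AA^*)$, and then direct application of Theorems \ref{T4.1}, \ref{T4.5} and \ref{T8.1} (with $\nu=0$ in Theorem \ref{T8.1}(ii) for plain convergence). This is precisely the argument given in the paper's Example 2, including the remark that Lardy's method is handled by the same template.
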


\subsection{\bf Example 3} As the last example we consider the method
(\ref{3}) with $g_\alpha$ given by
\begin{equation}\label{g3}
g_\alpha(\lambda)=\frac{1}{\lambda}\left(1-e^{-\lambda/\alpha}\right)
\end{equation}
which arises from the asymptotic regularization for linear
ill-posed problems. In this method, the iterated sequence
$\{x_k^\delta\}$ is equivalently defined as $x_{k+1}^\delta
:=x^\delta(1/\alpha_k)$, where $x^\delta(t)$ is the solution of
the initial value problem
\begin{align*}
&\frac{d}{dt} x^\delta(t)
=F'(x_k^\delta)^*\left(y^\delta-F(x_k^\delta)+
F'(x_k^\delta)(x_k^\delta-x^\delta(t))\right), \quad t>0,\\
&x^\delta(0)=x_0.
\end{align*}
Note that the corresponding residual function is
$$
r_\alpha(\lambda)=e^{-\lambda/\alpha}.
$$
It is easy to see that Assumption \ref{A2.1}(a), (b) and
(\ref{A2.1.2}) hold with
$$
c_0=e^{-1}, \quad c_1=1, \quad c_3=\frac{1}{\sqrt{2e}} \quad
\mbox{and} \quad  c_4=\sqrt{\frac{2}{e}}.
$$
By using the inequality $1-e^{-t}\le \sqrt{t}$ for $t\ge 0$ we
have for $0<\alpha\le \beta$ that
$$
r_\beta(\lambda)-r_\alpha(\lambda) =r_\beta(\lambda)
\left(1-e^{\lambda/\beta-\lambda/\alpha}\right) \le
\sqrt{\frac{\lambda}{\alpha}-\frac{\lambda}{\beta}}
r_\beta(\lambda) \le
\sqrt{\frac{\lambda}{\alpha}}r_\beta(\lambda).
$$
This verifies Assumption \ref{A2.1}(c) with $c_2=1$. It is
well-known that the qualification of the linear asymptotic
regularization is $\bar{\nu}=\infty$ and (\ref{A2.1.3}) is
satisfied with $d_\nu=(\nu/e)^\nu$ for each $0\le \nu< \infty$.

In order to verify Assumption \ref{A2.2}, we assume that
$\{\alpha_k\}$ is a sequence of positive numbers satisfying
\begin{equation}\label{5.21}
0\le \frac{1}{\alpha_{k+1}}-\frac{1}{\alpha_k}\le \theta_0 \quad
\mbox{and} \quad \lim_{k\rightarrow \infty} \alpha_k=0
\end{equation}
for some $\theta_0>0$. Then for all $\lambda\in [0, 1]$ we have
$$
r_{\alpha_k}(\lambda)=e^{(1/\alpha_{k+1}-1/\alpha_k)\lambda}
r_{\alpha_{k+1}}(\lambda) \le e^{\theta_0}
r_{\alpha_{k+1}}(\lambda).
$$
Thus Assumption \ref{A2.2} is also true.

In order to verify Assumption \ref{A2.3} and Assumption
\ref{A6.2}, we set for every integer $n\ge 1$
$$
r_{\alpha, n}(\lambda)
:=\left(1+\frac{\lambda}{n\alpha}\right)^{-n}, \qquad g_{\alpha,
n}(\lambda):=\frac{1}{\lambda}
\left(1-\left(1+\frac{\lambda}{n\alpha}\right)^{-n}\right).
$$
Note that, for each fixed $\alpha>0$, $\{r_{\alpha, n}\}$ and
$\{g_{\alpha, n}\}$ are uniformly bounded over $[0, 1]$, and
$r_{\alpha, n}(\lambda)\rightarrow r_{\alpha}(\lambda)$ and
$g_{\alpha, n}(\lambda)\rightarrow g_{\alpha}(\lambda)$ as
$n\rightarrow \infty$. By the dominated convergence theorem, we
have for any bounded linear operator $A$ with $\|A\|\le 1$ that
\begin{align*}
\lim_{n\rightarrow \infty} &\|[r_{\alpha}(A^*A)-r_{\alpha,
n}(A^*A)]x\|^2\\
&=\lim_{n\rightarrow \infty} \int_0^{\|A\|^2}
\left(r_\alpha(\lambda)-r_{\alpha, n}(\lambda)\right)^2
d(E_\lambda x, x)=0
\end{align*}
and
\begin{align*}
\lim_{n\rightarrow\infty}& \|[g_{\alpha}(A^*A)-g_{\alpha,
n}(A^*A)]x\|^2\\
&=\lim_{n\rightarrow \infty} \int_0^{\|A\|^2}
\left(g_\alpha(\lambda)-g_{\alpha, n}(\lambda)\right)^2
d(E_\lambda x, x)=0
\end{align*}
for any $x\in X$, where $\{E_\lambda\}$ denotes the spectral
family generated by $A^*A$. Thus it suffices to verify Assumption
\ref{A2.3} and Assumption \ref{A6.2} with $g_\alpha$ and
$r_\alpha$ replaced by $g_{\alpha, n}$ and $r_{\alpha, n}$ with
uniform constants $c_6$, $c_7$ and $c_8$ independent of $n$. Let
$A$ and $B$ be any two bounded linear operators satisfying $\|A\|,
\|B\|\le 1$. We need the following inequality which says for any
integer $n\ge 1$ there holds
\begin{equation}\label{5.22}
\|r_{\alpha, n}(A^*A)(A^*A)^\nu\|\le \nu^\nu \alpha^\nu, \qquad
0\le \nu\le n.
\end{equation}
By noting that
\begin{align}\label{6.21.1}
r_{\alpha, n}&(A^*A)-r_{\alpha, n}(B^*B)\nonumber\\
&=\frac{1}{n\alpha}\sum_{i=1}^n r_{\alpha,
i}(A^*A)\left[A^*(B-A)+(B^*-A^*)B\right] r_{\alpha, n+1-i}(B^*B),
\end{align}
we thus obtain
$$
\|r_{\alpha, n}(A^*A)-r_{\alpha, n}(B^*B)\|\le
\sqrt{\frac{2}{\alpha}}\|A-B\|,
$$
\begin{equation}\label{5.23}
\|[r_{\alpha, n}(A^*A)-r_{\alpha, n}(B^*B)]B^*\|\le
\frac{3}{2}\|A-B\|
\end{equation}
and
$$
\|A[r_{\alpha, n}(A^*A)-r_{\alpha, n}(B^*B)]B^*\|\le
\sqrt{2\alpha} \|A-B\|.
$$
Furthermore, by noting that $g_{\alpha,
n}(\lambda)=\frac{1}{n\alpha}\sum_{i=1}^n r_{\alpha, i}(\lambda)$,
we may use (\ref{5.23}) to conclude
\begin{align*}
\|[g_{\alpha, n}(A^*A)-g_{\alpha, n}(B^*B)]B^*\|
& \le  \frac{1}{n\alpha} \sum_{i=1}^n \|[r_{\alpha, i}(A^*A)-r_{\alpha, i}(B^*B)]B^*\|\\
&\le \frac{3}{2\alpha}\|A-B\|.
\end{align*}
Assumption \ref{A2.3} is therefore verified.

It remains to verify Assumption \ref{A6.2} with $g_\alpha$ and
$r_\alpha$ replaced by $g_{\alpha, n}$ and $r_{\alpha, n}$ with
uniform constants $c_7$ and $c_8$ independent of $n$. By using
(\ref{6.21.1}), Assumption \ref{F1} and (\ref{5.22}) we have
\begin{align*}
\|r_{\alpha, n}&(F_x'^*F_x')-r_{\alpha, n}(F_z'^*F_z')\|\\
&\le \frac{1}{n\alpha}\sum_{i=1}^n \|r_{\alpha, i}(F_x'^*F_x')
(F_x'^*F_x') (R(z, x)-I) r_{\alpha,
n+1-i}(F_z'^*F_z')\|\\
&+\frac{1}{n\alpha} \sum_{i=1}^n \|r_{\alpha,
i}(F_x'^*F_x')(I-R(x, z))^*(F_z'^*F_z') r_{\alpha,
n+1-i}(F_z'^*F_z')\|\\
&\le \|I-R(z, x)\|+\|I-R(x, z)\|\\
&\le 2K_0\|x-z\|.
\end{align*}
This implies (\ref{6.2.1}).

By using (\ref{6.21.1}), Assumption \ref{F2} and (\ref{5.22}) we
also have for any $a\in X$ and $b\in Y$ satisfying $\|a\|=\|b\|=1$
that
\begin{align*}
(F_x'[r_{\alpha, n}&(F_x'^*F_x')-r_{\alpha, n}(F_z'^*F_z')]a, b)\\
&\le \frac{1}{n\alpha} \sum_{i=1}^n |(r_{\alpha,
i}(F_x'F_x'^*)(F_x'F_x'^*) (F_z'-F_x') r_{\alpha,
n+1-i}(F_z'^*F_z') a, b)|\\
&+ \frac{1}{n\alpha} \sum_{i=1}^n |(a, r_{\alpha,
n+1-i}(F_z'^*F_z') F_z'^*(F_z'-F_x')F_x'^* r_{\alpha,
i}(F_x'F_x'^*) b)|\\
&\le \sqrt{2} K_1 \alpha^{1/2}
\|x-z\|+K_2\|F_z'(x-z)\|+\frac{1}{2} K_2\|F_x'(x-z)\|.
\end{align*}
This implies (\ref{6.2.2}).

Therefore, we may apply Theorem \ref{T4.1}, Theorem \ref{T4.5} and
Theorem \ref{T8.1} to conclude the following result.

\begin{corollary}
Let $F$ satisfy (\ref{2.1}) and (\ref{2.3}), and let
$\{\alpha_k\}$ be a sequence of positive numbers satisfying
(\ref{5.21}) for some $\theta_0>0$. Let $\{x_k^\delta\}$ be
defined by (\ref{3}) with $g_\alpha$ given by (\ref{g3}) and let
$k_\delta$ be the first integer satisfying (\ref{2.4}) with
$\tau>1$.

(i) If $F$ satisfies Assumption \ref{Lip}, and if $x_0-x^\dag$
satisfies (\ref{7}) for some $\omega\in X$ and $\nu\ge 1/2$, then
$$
\|x_{k_\delta}^\delta-x^\dag\| \le
C_\nu\|\omega\|^{1/(1+2\nu)}\delta^{2\nu/(1+2\nu)}
$$
provided $L\|u\|\le \eta_0$, where $u\in {\mathcal
N}(F'(x^\dag)^*)^\perp\subset Y$ is the unique element such that
$x_0-x^\dag=F'(x^\dag)^* u$, $\eta_0>0$ is a constant depending
only on $\tau$, $\theta_0$ and $\alpha_0$,  and $C_\nu$ is a
constant depending only on $\tau$, $\theta_0$, $\alpha_0$ and
$\nu$.

(ii) Let $F$ satisfy Assumption \ref{F1} and Assumption \ref{F2},
and let $x_0-x^\dag \in N(F'(x^\dag))^\perp$. Then there exists a
constant $\eta_1>0$ depending only on $\tau$, $\theta_0$ and
$\alpha_0$ such that if $(K_0+K_1+K_2)\|x_0-x^\dag\|\le \eta_1$
then
$$
\lim_{\delta\rightarrow 0} x_{k_\delta}^\delta=x^\dag;
$$
moreover, when $x_0-x^\dag$ satisfies (\ref{7}) for some
$\omega\in X$ and $\nu>0$, then
$$
\|x_{k_\delta}^\delta-x^\dag\|\le C_\nu \|\omega\|^{1/(1+2\nu)}
\delta^{2\nu/(1+2\nu)}
$$
for some constant $C_\nu>0$ depending only on $\tau$, $\theta_0$,
$\alpha_0$ and $\nu$; while when $x_0-x^\dag$ satisfies
(\ref{logsource}) for some $\omega\in X$ and $\mu>0$, then
\begin{equation*}
\|x_{k_\delta}^\delta-x^\dag\|\le C_\mu
\|\omega\|\left(1+\left|\ln
\frac{\delta}{\|\omega\|}\right|\right)^{-\mu}
\end{equation*}
for some constant $C_\mu$ depending only on $\tau$, $\theta_0$,
$\alpha_0$ and $\mu$.
\end{corollary}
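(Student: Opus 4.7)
The plan is to deduce the corollary directly from Theorem \ref{T4.1}, Theorem \ref{T4.5}, and Theorem \ref{T8.1} once all of the abstract hypotheses on $\{g_\alpha\}$ and $\{\alpha_k\}$ have been checked for the specific choice $g_\alpha(\lambda)=\lambda^{-1}(1-e^{-\lambda/\alpha})$ with residual $r_\alpha(\lambda)=e^{-\lambda/\alpha}$. Since the hypotheses on $F$ in the corollary (\ref{2.1}), (\ref{2.3}), Assumption \ref{Lip} for (i), and Assumption \ref{F1}/\ref{F2} for (ii), are taken verbatim from the theorems, only the linear-regularization side needs verifying.

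First I would dispose of Assumption \ref{A2.1}(a)--(c) and of (\ref{A2.1.2}) by straightforward calculus on $r_\alpha(\lambda)=e^{-\lambda/\alpha}$: (a) and (b) follow from monotonicity of $e^{-t}$, (c) follows from $1-e^{-t}\le \sqrt{t}$ applied to $t=\lambda/\alpha-\lambda/\beta$, and the constants $c_0,c_1,c_3,c_4$ read off at once. The qualification is $\bar\nu=\infty$ with $d_\nu=(\nu/e)^\nu$ from maximizing $e^{-t}t^\nu$. For Assumption \ref{A2.2}, I would use the growth condition (\ref{5.21}) to get $r_{\alpha_k}(\lambda)/r_{\alpha_{k+1}}(\lambda)=e^{(1/\alpha_{k+1}-1/\alpha_k)\lambda}\le e^{\theta_0}$, giving $c_5=e^{\theta_0}$.

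The main obstacle is Assumption \ref{A2.3} and Assumption \ref{A6.2}, because $r_\alpha(\lambda)=e^{-\lambda/\alpha}$ does not admit a finite algebraic decomposition analogous to the telescoping identity (\ref{6.20.1}) used in Example 1. The key device is to introduce the rational approximants
\[
r_{\alpha,n}(\lambda)=\left(1+\frac{\lambda}{n\alpha}\right)^{-n},\qquad g_{\alpha,n}(\lambda)=\frac{1}{\lambda}\left(1-\left(1+\frac{\lambda}{n\alpha}\right)^{-n}\right),
\]
which enjoy the identity
\[
r_{\alpha,n}(A^*A)-r_{\alpha,n}(B^*B)=\frac{1}{n\alpha}\sum_{i=1}^n r_{\alpha,i}(A^*A)\bigl[A^*(B-A)+(B^*-A^*)B\bigr]r_{\alpha,n+1-i}(B^*B).
\]
Combined with the bound $\|r_{\alpha,n}(A^*A)(A^*A)^\nu\|\le \nu^\nu\alpha^\nu$ for $0\le \nu\le n$, this produces $n$-independent constants in all four inequalities of Assumption \ref{A2.3}; similarly, feeding Assumption \ref{F1}/\ref{F2} into the same sum verifies Assumption \ref{A6.2} with uniform $c_7,c_8$. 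I would then pass $n\to\infty$ using pointwise convergence $r_{\alpha,n}\to r_\alpha$, $g_{\alpha,n}\to g_\alpha$ on $[0,1/2]$ together with uniform boundedness, invoking the dominated convergence theorem on the spectral representation $\int_0^{\|A\|^2}|r_\alpha-r_{\alpha,n}|^2\,d(E_\lambda x,x)\to 0$ to transfer the inequalities to $r_\alpha$ and $g_\alpha$ themselves.

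With every abstract hypothesis in force, part (i) of the corollary is Theorem \ref{T4.1} applied under the identification of constants above, while the two rate assertions in (ii) are the two cases of Theorem \ref{T4.5}; the convergence $x_{k_\delta}^\delta\to x^\dag$ under $x_0-x^\dag\in N(F'(x^\dag))^\perp$ is the $\nu=0$ case of Theorem \ref{T8.1}(ii). The dependences of $\eta_0,\eta_1,C_\nu,C_\mu$ on $\tau,\theta_0,\alpha_0$ follow from tracking the numerical constants produced in the verification steps, with $\alpha_0$ entering through (\ref{2.3}) and through the logarithmic estimate (\ref{8.25.2}).
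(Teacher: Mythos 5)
Your proposal is correct and follows essentially the same route as the paper: elementary verification of Assumption \ref{A2.1}, (\ref{A2.1.2}) and Assumption \ref{A2.2} (with $c_5=e^{\theta_0}$), then verification of Assumptions \ref{A2.3} and \ref{A6.2} via the rational approximants $r_{\alpha,n},g_{\alpha,n}$, the telescoping identity, the bound $\|r_{\alpha,n}(A^*A)(A^*A)^\nu\|\le\nu^\nu\alpha^\nu$, and a dominated-convergence passage $n\to\infty$, before invoking Theorems \ref{T4.1}, \ref{T4.5} and \ref{T8.1}. This is exactly the paper's argument, so nothing further is needed.
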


\begin{acknowledgements}
The authors wish to thank the referee for careful reading of the
manuscript and useful comments.
\end{acknowledgements}


\begin{thebibliography}{999}
%
%
\bibitem{B} A. B. Bakushinskii,  {\it The problems of
the convergence of the iteratively regularized Gauss-Newton
method}, Comput. Math. Math. Phys., 32(1992), 1353--1359.

\bibitem{BH05} F. Bauer and T. Hohage, {\it A Lepskij-type stopping rule for
regularized Newton methods}, Inverse Problems, 21(2005),
1975--1991.

\bibitem{BNS97} B.  Blaschke, A. Neubauer and O. Scherzer,
{\it  On convergence rates for the iteratively regularized
Gauss-Newton method}, IMA J. Numer. Anal., 17(1997), 421--436.

\bibitem{DES98} P. Deuflhard, H. W. Engl and O. Scherzer, {\it A
convergence analysis of iterative methods for the solution of
nonlinear ill-posed problems under affinely invariant conditions},
Inverse Problems, 14 (1998), no. 5, 1081--1106.

\bibitem{H97a} M. Hanke, {\it A regularizing Levenberg-Marquardt scheme
with applications to inverse groundwater filtration problems},
Inverse Problems, 13(1997), 79--95.

\bibitem{H97b} M. Hanke, {\it Regularizing properties of a
truncated Newton-CG algorithm for nonlinear inverse problems},
Numer. Funct. Anal. Optim., 18(1997), 971--993.

\bibitem{HNS95} M. Hanke, A. Neubauer and O. Scherzer, {\it A
convergence analysis of Landweber iteration of nonlinear ill-posed
problems},  Numer. Math., 72(1995),  21--37.

\bibitem{H97} T. Hohage, {\it Logarithmic convergence rates
of the iteratively regularized Gauss-Newton method for an inverse
potential and an inverse scattering problem}, Inverse Problems,  13
(1997),  no. 5, 1279--1299.

\bibitem{Jin00} Q. N. Jin, {\it  On the iteratively
regularized Gauss-Newton method for solving nonlinear ill-posed
problems},  Math. Comp.,  69 (2000),  no. 232, 1603--1623.

\bibitem{Jin08} Q. N. Jin, {\it A convergence analysis of the
iteratively regularized Gauss-Newton method under the Lipschitz
condition}, Inverse Problems, 24(2008), no. 4, to appear.

\bibitem{JH99} Q. N. Jin and Z. Y. Hou, {\it  On an a
posteriori parameter choice strategy for Tikhonov regularization
of nonlinear ill-posed problems},   Numer. Math.,  83(1999),  no.
1, 139--159.

\bibitem{K97} B. Kaltenbacher, {\it Some Newton-type methods
for the regularization of nonlinear illposed problems},  Inverse
Problems, 13(1997), 729--753.

\bibitem{K98} B. Kaltenbacher, {\it A posteriori choice
strategies for some Newton type methods for the regularization of
nonlinear ill-posed problems}, Numer. Math., 79 (1998), 501-528.

\bibitem{KNS08} B. Kaltenbacher, A. Neubauer and O. Scherzer,
{\it Iterative Regularization Methods for Nonlinear Ill-Posed
Problems}, Berlin, de Gruyter, 2008.

\bibitem{R99} A. Rieder, {\it On the regularization of nonlinear
ill-posed problems via inexact Newton iterations},  Inverse
Problems,  15(1999), 309--327.

\bibitem{R01} A. Rieder, {\it On convergence rates of inexact Newton
regularizations},  Numer. Math.,  88(2001), 347--365.

\bibitem{SEK93} O. Scherzer, H. W. Engl and K. Kunisch,
{\it  Optimal a posteriori parameter choice for Tikhonov
regularization for solving nonlinear ill-posed problems}, SIAM J.
Numer. Anal., 30(1993), 1796--1838.

\bibitem{T97} U. Tautenhahn, {\it On a general regularization scheme
for nonlinear ill-posed problems}, Inverse Problems, 13 (1997),
no. 5, 1427--1437.

\bibitem{TJ03} U. Tautenhahn and Q. N. Jin, {\it  Tikhonov
regularization and a posteriori rules for solving nonlinear ill
posed problems},   Inverse Problems,  19 (2003),  no. 1, 1--21.

\bibitem{VV} G. M. Vainikko and A. Y. Veretennikov, {\it Iteration Procedures in
Ill-Posed Problems}, Moscow, Nauka, 1986 (In Russian).
\end{thebibliography}



\end{document}